\newtheorem{theorem}{Theorem}
\newtheorem{lemma}[theorem]{Lemma}
\newtheorem{cor}[theorem]{Corollary}
\newtheorem{remark}[theorem]{Remark}
\mathchardef\mh="2D
\newtheorem{corollary}[theorem]{Corollary}
\newcounter{tbox}
\newcommand{\tbox}[1]{\vspace*{0.3cm}\refstepcounter{tbox}\noindent{ \parbox{\textwidth}{(\thetbox) \emph{#1}}}\vspace*{0.3cm}}
\DeclareMathOperator{\und}{\textsf{Under}}
\DeclareMathOperator{\lft}{\textsf{Left}}
\DeclareMathOperator{\mx}{\textsf{MAX}}
\newcommand{\sta}[1]{\refstepcounter{tbox}\noindent{ \parbox{\textwidth}{\vspace*{0.3cm}(\thetbox) \emph{#1}\vspace*{0.3cm}}}}
\newcommand{\vsp}{\vspace*{3mm}}
\newcommand{\otherlabel}[2]{\protected@edef\@currentlabel{#2}\label{#1}}
    \title[List-3-Coloring Ordered graphs]{List-3-Coloring Ordered graphs with a forbidden induced subgraph}
\author{Sepehr Hajebi$^{\ast}$}
\email{shajebi@uwaterloo.ca}
\author{Yanjia Li$^{\ast}$}
\email{yanjia.li@uwaterloo.ca}
\author{Sophie Spirkl$^{\ast \dagger}$}
\email{sspirkl@uwaterloo.ca}
\address{$^{\ast}$Department of Combinatorics and Optimization, University of Waterloo, Waterloo, Ontario N2L3G1, Canada}
\address{$^{\dagger}$We acknowledge the support of the Natural Sciences and Engineering Research Council of Canada (NSERC), [funding reference number RGPIN-2020-03912]. Cette recherche a été financée par le Conseil de recherches en sciences naturelles et en génie du Canada (CRSNG), [numéro de référence RGPIN-2020-03912]. This project was funded in part by the Government of Ontario.}
\thanks{This is an accepted manuscript. The published version appeared in SIAM Journal on Discrete Mathematics Vol. 38, Iss. 1, see \url{https://doi.org/10.1137/22M1515768}.}
\date{\today}
\begin{document}
\maketitle
\raggedbottom

\begin{abstract} The \textsc{List-$3$-Coloring Problem} is to decide, given a graph $G$ and a list $L(v)\subseteq \{1,2,3\}$ of colors assigned to each vertex $v$ of $G$, whether $G$ admits a proper coloring $\phi$ with $\phi(v)\in L(v)$ for every vertex $v$ of $G$, and the \textsc{$3$-Coloring Problem} is the \textsc{List-$3$-Coloring Problem} on instances with $L(v)=\{1,2,3\}$ for every vertex $v$ of $G$. The \textsc{List-$3$-Coloring Problem} is a classical \textsf{NP}-complete problem, and it is well-known that while restricted to \textit{$H$-free} graphs (meaning graphs with no induced subgraph isomorphic to a fixed graph $H$), it remains \textsf{NP}-complete unless $H$ is isomorphic to an induced subgraph of a path. However, the current state of art is far from proving this to be sufficient for a polynomial time algorithm; in fact, the complexity of the \textsc{$3$-Coloring Problem} on $P_8$-free graphs (where $P_8$ denotes the eight-vertex path) is unknown. Here we consider a variant of the \textsc{List-$3$-Coloring Problem} called the \textsc{Ordered Graph List-$3$-Coloring Problem}, where the input is an \textit{ordered graph}, that is, a graph along with a linear order on its vertex set. For ordered graphs $G$ and $H$, we say $G$ is \textit{$H$-free} if $H$ is not isomorphic to an induced subgraph of $G$ with the isomorphism  preserving the linear order. We prove, assuming $H$ to be an ordered graph, a nearly complete dichotomy for the \textsc{Ordered Graph List-$3$-Coloring Problem} restricted to $H$-free ordered graphs. In particular, we show that the problem can be solved in polynomial time if $H$ has at most one edge, and remains \textsf{NP}-complete if $H$ has at least three edges. Moreover, in the case where $H$ has exactly two edges, we give a complete dichotomy when the two edges of $H$ share an end, and prove several \textsf{NP}-completeness results when the two edges of $H$ do not share an end, narrowing the open cases down to three very special types of two-edge ordered graphs.
\end{abstract}
	
\section{Introduction}
Graphs in this paper are finite and simple.  We denote the set of positive integers by $\mathbb{N}$, and for every integer $k\in \mathbb{N}$, we denote by $[k]$ the set of all positive integers which are smaller than or equal to $k$. Let $G=(V(G),E(G))$ be a graph. For every $X$ which is either a vertex or a subset of vertices of $G$, we write $G\setminus X$ for the graph obtained from $G$ by removing $X$. Also, we denote by $G[X]$ \textit{the subgraph of $G$ induced by $X$}, that is, $G\setminus (V(G)\setminus X)=(X,\{e\in E(G): e\subseteq X\})$.  For graphs $G$ and $H$, we say $H$ is an \emph{induced subgraph} of $G$ if $H$ is isomorphic to $G[X]$ for some $X\subseteq V(G)$, and otherwise we say $G$ is \emph{$H$-free}. For all $t\in \mathbb{N}$, we use $P_t$ to denote the path on $t$ vertices. 

Let $G$ be a graph and $k\in \mathbb{N}$. By a $k$\textit{-coloring} of $G$, we mean a function $\phi: V(G) \rightarrow [k]$. A coloring $\phi$ of $G$ is said to be \textit{proper} if $\phi(u) \neq \phi(v)$ for every edge $uv \in E(G)$. In other words, $\phi$ is proper if and only if for every $i\in [k]$, $\phi^{-1}(i)$ is a stable set in $G$. We say $G$ is \emph{$k$-colorable} if $G$ has a proper $k$-coloring. For fixed $k\in \mathbb{N}$, the \textsc{$k$-Coloring Problem} asks, given graph $G$, whether $G$ is $k$-colorable. 
 
A \emph{$k$-list-assignment} of $G$ is a map $L:V(G)\rightarrow 2^{[k]}$. For $v\in V(G)$, we refer to $L(v)$ as the \textit{list of} $v$. Also, for every $i\in [k]$, we define $L^{(i)}=\{v\in V(G):i\in L(v)\}$. An \emph{$L$-coloring} of $G$ is a proper $k$-coloring $\phi$ of $G$ with $\phi(v) \in L(v)$ for all $v \in V(G)$. We say $G$ is \textit{$L$-colorable} if it admits an $L$-coloring. For example, if $L(v)=\emptyset$ for some $v\in V(G)$, then $G$ admits no $L$-coloring. Also, if $V(G)=\emptyset$, then $G$ vacuously admits an $L$-coloring for every $k$-list-assignment $L$. For fixed $k\in \mathbb{N}$, the \textsc{List-$k$-Coloring Problem} is to decide, given an instance $(G,L)$ consisting of a graph $G$ and a $k$-list-assignment $L$ of $G$, whether $G$ is $L$-colorable. Note that the \textit{$k$-coloring problem} is in fact the \textsc{List-$k$-Coloring Problem} restricted to instances $(G,L)$ where $L(v)=[k]$ for every $v\in V(G)$. 

The \textsc{List-$k$-coloring Problem} is famously known to be \textsf{NP}-complete for all $k\geq 3$ \cite{karp}, and understanding the complexity of this problem while restricted to graphs with a fixed forbidden induced subgraph, that is, $H$-free graphs for some fixed graph $H$, is of enormous interest. The following two results, combined, show that the \textsf{NP}-hardness persists unless $H$ is an induced subgraph of a path.
\begin{theorem}[Kami\'{n}ski and Lozin \cite{CycleFree}, see also \cite{emenem}] \label{thm:cycle}
	For all $k\geq 3$, the \textsc{$k$-Coloring} problem restricted to $H$-free graphs is \textsf{NP}-complete if $H$ contains a cycle. 
\end{theorem}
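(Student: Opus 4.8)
The plan is to reduce the ordinary $k$-Coloring Problem on arbitrary graphs (which is \textsf{NP}-complete for every $k\ge 3$: for $k=3$ this is classical, and for $k>3$ one joins a clique $K_{k-3}$ to every vertex) to the $k$-Coloring Problem on graphs of large girth, and then to note that graphs of large enough girth are automatically $H$-free. Precisely: since $H$ contains a cycle it has a finite girth $g_0\ge 3$, and $H$ contains a cycle of length $g_0$ as a subgraph; hence if a graph $G'$ has girth at least $g_0+1$, then $G'$ has no cycle of length at most $g_0$, so no induced subgraph of $G'$ is isomorphic to $H$, that is, $G'$ is $H$-free. So, setting $g=g_0+1$, it suffices to prove that for every fixed $g\ge 3$ and $k\ge 3$ the $k$-Coloring Problem restricted to graphs of girth at least $g$ is \textsf{NP}-hard; membership in \textsf{NP} is immediate.

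The core of the argument is an \emph{inequality gadget}: a fixed graph $\Gamma=\Gamma_{k,g}$ with two distinguished non-adjacent vertices $a,b$ such that (i) $\Gamma$ has girth at least $g$; (ii) every proper $k$-coloring $\phi$ of $\Gamma$ has $\phi(a)\ne\phi(b)$; (iii) for all distinct $c,c'\in[k]$ some proper $k$-coloring $\phi$ of $\Gamma$ has $\phi(a)=c$ and $\phi(b)=c'$; and (iv) every $a$--$b$ path in $\Gamma$ has length at least $g$. Given such a $\Gamma$, the reduction takes an instance $G$ of $k$-Coloring to the graph $G'$ obtained by replacing each edge $uv$ of $G$ with a private copy of $\Gamma$ whose $a$ is identified with $u$ and whose $b$ is identified with $v$ (so the copies pairwise meet only in vertices of $G$). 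Properties (ii) and (iii) give that $G'$ is $k$-colorable if and only if $G$ is, and $G'$ is built in polynomial time since $|V(\Gamma)|$ depends only on $k$ and $g$. Moreover the girth of $G'$ is at least $g$: a cycle lying inside one copy of $\Gamma$ has length at least $g$ by (i); any other cycle meets at least two copies, and since distinct copies share only vertices of $G$ while internal vertices of a copy have all their neighbours inside that copy, such a cycle contains a maximal subpath running between two vertices of $G$ inside a single copy, which is therefore an $a$--$b$ path and has length at least $g$ by (iv).

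To build $\Gamma$ I would first produce an \emph{equality gadget} from a graph of large girth and large chromatic number. By the classical theorem of Erd\H{o}s, there is a graph $S$ of girth at least $g$ with $\chi(S)=k+1$ that is edge-critical, i.e. $\chi(S\setminus e)=k$ for every edge $e$; this is obtained by taking any graph of girth at least $g$ and chromatic number greater than $k$, passing to a $(k+1)$-chromatic subgraph, and deleting edges greedily while the chromatic number stays $k+1$. Delete an edge $xy$ of $S$: in any proper $k$-coloring of $S\setminus xy$ one must have $\phi(x)=\phi(y)$, for otherwise the coloring would be a proper $k$-coloring of $S$, contradicting $\chi(S)=k+1$; and permuting colors shows this common value can be any element of $[k]$. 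Now let $\Gamma$ consist of two disjoint copies of $S\setminus xy$, the first on a pair $(a,m)$ and the second on a pair $(m',b)$, together with the edge $mm'$. Then (i) holds because $mm'$ is a bridge, so every cycle of $\Gamma$ lies in a copy of $S\setminus xy$ and thus has length at least the girth of $S$; (ii) holds because the equality constraints force $\phi(a)=\phi(m)\ne\phi(m')=\phi(b)$; (iii) holds by choosing proper $k$-colorings of the two copies independently; and (iv) holds because an $a$--$b$ path must cross the bridge $mm'$, hence has length at least $2(g-1)+1\ge g$.

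The only non-elementary input is the existence of graphs of simultaneously large girth and large chromatic number, which I would simply quote; the remainder is bookkeeping. The step I expect to require the most care is the girth estimate for $G'$ — in particular, verifying that a cycle visiting several gadget copies really must traverse an entire $a$--$b$ path inside some copy — but this comes out cleanly from the facts that distinct copies of $\Gamma$ overlap only in vertices of $G$ and that internal vertices of a copy are not shared.
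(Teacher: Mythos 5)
This theorem is not proved in the paper at all; it is quoted from Kamiński and Lozin, and your argument is correct and essentially reconstructs that known proof: reduce $k$-coloring to $k$-coloring of graphs of girth at least $g$ by replacing each edge with an inequality gadget built from two copies of a $(k+1)$-chromatic, edge-critical, high-girth graph (from Erd\H{o}s' theorem) minus an edge, joined by a bridge. Your verification of the key points — the equality forcing in $S\setminus xy$, the girth bound for cycles crossing several gadget copies via property (iv), and the fact that girth larger than the girth of $H$ guarantees $H$-freeness — is sound, so there is nothing to correct.
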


\begin{theorem}[Holyer \cite{ClawFree}, Leven and Galil \cite{leven}] \label{thm:claw}
	For all $k\geq 3$, the \textsc{$k$-Coloring Problem} restricted to $H$-free graphs is \textsf{NP}-complete if $H$ contains a claw (a vertex with three pairwise nonadjacent neighbors). 
\end{theorem}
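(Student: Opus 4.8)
The plan is to reduce from the \textsc{$k$-Edge-Coloring Problem} via the line graph construction. Recall that for a graph $G$ the \emph{line graph} $L(G)$ has the edges of $G$ as its vertices, with two of them adjacent in $L(G)$ precisely when they share an endpoint in $G$. A proper $k$-edge-coloring of $G$ is exactly a proper $k$-vertex-coloring of $L(G)$, so $G$ is $k$-edge-colorable if and only if $L(G)$ is $k$-colorable, and $L(G)$ can be computed from $G$ in polynomial time.

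First I would observe that $L(G)$ is claw-free for every graph $G$: if a vertex $e=uv$ of $L(G)$ had three pairwise nonadjacent neighbors, then each of these, being an edge of $G$ meeting $e$, would contain $u$ or $v$, so by pigeonhole two of them would share an endpoint and hence be adjacent in $L(G)$, a contradiction. Next, since by hypothesis $H$ has a vertex with three pairwise nonadjacent neighbors, every graph isomorphic to $H$ contains an induced claw; hence a claw-free graph cannot contain $H$ as an induced subgraph, and in particular every line graph is $H$-free. Consequently the map $G\mapsto L(G)$ is a polynomial-time reduction from the \textsc{$k$-Edge-Coloring Problem} to the \textsc{$k$-Coloring Problem} restricted to $H$-free graphs.

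Since the \textsc{$k$-Edge-Coloring Problem} is \textsf{NP}-complete for every $k\geq 3$ by \cite{ClawFree,leven}, and since $k$-colorability is plainly in \textsf{NP} (a coloring is a polynomial-size certificate checkable in polynomial time), the claimed \textsf{NP}-completeness follows. The only genuine content here is the \textsf{NP}-hardness of edge coloring, which is exactly the theorem of Holyer and of Leven and Galil that we invoke; everything else is the routine observation that line graphs are claw-free and that forbidding any graph containing an induced claw is a weaker restriction than forbidding the claw itself. I therefore do not anticipate any obstacle in the argument above; were a self-contained treatment wanted, the bottleneck would be reproving Holyer's intricate gadget reduction from a variant of satisfiability, which is well beyond our scope.
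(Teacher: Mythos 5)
Your proposal is correct, and it is exactly the standard argument behind this theorem, which the paper itself does not prove but simply cites from Holyer and Leven--Galil: their \textsf{NP}-hardness results for $k$-edge-coloring transfer to $k$-coloring of $H$-free graphs via the line-graph construction, since line graphs are claw-free and hence $H$-free whenever $H$ contains an induced claw. Nothing further is needed.
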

Of course, for each $k\geq 3$, it is most desirable to completely distinguish graphs $H$ for which the \textsc{List-$k$-coloring Problem} on $H$-free graphs is polynomial-time solvable from those for which the problem remains \textsf{NP}-complete. Despite several attempts \cite{kP5, 3P7,L3P6+rP3, LkP5+rP1&L5P4+P2-NP}, no such value of $k$ had been known until very recently, when the authors of the present paper settled the case $k=5$:
\begin{theorem}[Hajebi, Li, Spirkl \textcolor{red}{\cite{hajebi:2021}}] \label{thm:rP3us}
Assuming \textsf{P}$\neq$\textsf{NP}, the \textsc{List-$5$-Coloring Problem} restricted to $H$-free graphs can be solved in polynomial time if and only if $H$ is an induced subgraph of a graph $H'$ of one of the following two types.
\begin{itemize}
\item Each component of $H'$ is isomorphic to $P_3$.
    \item One component of $H'$ is isomorphic to $P_5$ and all other components of $H$ are isomorphic to $P_1$.
\end{itemize}
\end{theorem}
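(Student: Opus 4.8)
The plan is to prove both implications of the dichotomy. I will begin with the hardness side. By Theorem~\ref{thm:cycle} and Theorem~\ref{thm:claw}, applied with $k=5$, the \textsc{$5$-Coloring Problem} --- and hence the \textsc{List-$5$-Coloring Problem}, of which it is the special case where every list equals $\{1,2,3,4,5\}$ --- is already \textsf{NP}-complete on $H$-free graphs whenever $H$ contains a cycle or a claw, so I may assume $H$ is a linear forest. The combinatorial core of the reduction step is the observation that a linear forest $H$ fails to be an induced subgraph of every $rP_3$ precisely when some component of $H$ has at least four vertices, and fails to be an induced subgraph of every $P_5 + rP_1$ precisely when $H$ has a component with at least six vertices or at least two components each with at least two vertices. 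Hence every linear forest $H$ not covered by the two polynomial cases contains an induced copy of $P_6$ or of $P_4 + P_2$; since $H_1$-free graphs are $H_2$-free whenever $H_1$ is an induced subgraph of $H_2$, and \textsf{NP}-hardness on a subclass implies \textsf{NP}-hardness on a superclass, it suffices to establish \textsf{NP}-completeness of the \textsc{List-$5$-Coloring Problem} on $P_6$-free graphs and on $(P_4 + P_2)$-free graphs.

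For $P_6$-free graphs this follows from the known \textsf{NP}-completeness of the (list-free) \textsc{$5$-Coloring Problem} on $P_6$-free graphs due to Huang. The new ingredient, which I expect to be the main obstacle on the hardness side, is the $(P_4 + P_2)$-free case: one needs a polynomial reduction --- from, say, a suitably structured variant of \textsc{Not-All-Equal Sat} or a constraint-satisfaction problem of the type used in earlier coloring reductions --- whose gadgets, and whose wiring of gadgets together, contain no induced $P_4$ anticomplete to a disjoint edge. The natural way to enforce this is to keep the whole construction ``locally dense'': for instance, to arrange that deleting the two ends of any edge leaves a $P_4$-free graph, which pushes one towards cograph-like or complete-multipartite-like gadgets together with careful global bookkeeping. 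This is where the bulk of the effort goes.

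On the algorithmic side I treat the two families separately, in each case reducing by structural analysis and bounded branching to an already-tractable base class. For $(P_5 + rP_1)$-free graphs the base class is $P_5$-free graphs, on which the \textsc{List-$5$-Coloring Problem} (indeed the \textsc{List-$k$-Coloring Problem} for every fixed $k$) is solvable in polynomial time by Ho{\`a}ng, Kami\'nski, Lozin, Preissmann and Sritharan. The structural lemma I would use is that if $G$ is $(P_5 + rP_1)$-free and $Q$ is an induced $P_5$ of $G$, then the set $A$ of vertices of $G$ anticomplete to $V(Q)$ (that is, with no neighbour in $V(Q)$) has independence number at most $r-1$; in a yes-instance one moreover has $\omega(G) \le 5$, so by Ramsey's theorem $|A|$ is bounded by a constant depending only on $r$ (and if it is not, we may output ``no''). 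I would then fix a maximal collection $\mathcal{M}$ of pairwise anticomplete induced $P_5$'s of $G$: maximality forces $|\mathcal{M}| \le r$, so $W = \bigcup_{Q\in\mathcal{M}} V(Q)$ has bounded size and $G\setminus N[W]$ is $P_5$-free, while (if $\mathcal{M}\neq\emptyset$) being anticomplete to a fixed induced $P_5$ it has bounded size in a yes-instance. Thus $G$ becomes a $P_5$-free graph after deleting a bounded set that is (almost) a dominating structure; enumerating the colorings of that bounded set, propagating, and invoking the $P_5$-free algorithm on what remains is the shape of the argument. The delicate point --- and the technical heart of this case --- is to organize the interaction between the bounded ``core'', its neighbourhood, and the $P_5$-free remainder so that the whole procedure runs in polynomial time rather than branching uncontrollably.

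For $rP_3$-free graphs the base class is $P_3$-free graphs, that is, disjoint unions of cliques, on which the \textsc{List-$5$-Coloring Problem} is trivially polynomial (one bipartite-matching computation per clique, and in a yes-instance every clique has at most five vertices). A maximal collection of pairwise anticomplete induced $P_3$'s of an $rP_3$-free graph again has at most $r-1$ members, its union $W'$ is a bounded set, and $G \setminus N[W']$ is a disjoint union of (in a yes-instance, bounded) cliques; after enumerating the colorings of $W'$ one must control the interaction between $W'$, its neighbourhood and the clique part, and --- exactly as in the $(P_5+rP_1)$-free case --- the work lies in showing this can be done with only boundedly much branching so that the residual list-coloring instances are easy. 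In summary, the two places I expect genuine difficulty are the $(P_4+P_2)$-free hardness construction and the polynomiality of these two branching reductions; the remaining steps are either immediate from the cited theorems or routine list propagation.
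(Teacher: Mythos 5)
First, a point of reference: Theorem \ref{thm:rP3us} is not proved in this paper at all; it is quoted from \cite{hajebi:2021}. So the only question is whether your outline would amount to a proof, and it would not, because it defers exactly the parts that carry the content of the theorem. The easy reductions you do correctly: Theorems \ref{thm:cycle} and \ref{thm:claw} let you assume $H$ is a linear forest, and your case analysis rightly identifies $P_6$ and $P_4+P_2$ as the minimal linear forests outside the two polynomial families, with the $P_6$ case disposed of by Huang's \textsf{NP}-completeness of $5$-coloring $P_6$-free graphs. But for $(P_4+P_2)$-free graphs you give no reduction, only a wish-list for what a gadget should avoid; in fact the \textsf{NP}-completeness of \textsc{List-$5$-Coloring} on $(P_4+P_2)$-free graphs, like the polynomial algorithm for $(P_5+rP_1)$-free graphs, is among the previously known results cited in the introduction of this paper, so it should either be invoked by citation or actually constructed — you do neither.

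On the algorithmic side, which is where the real content of \cite{hajebi:2021} lies (the $rP_3$-free algorithm is its main theorem and occupies most of that paper), your sketch contains a step that is wrong as stated and then stops at the decisive point. For $(P_5+rP_1)$-free graphs your bounded set is $W\cup(G\setminus N[W])$, and deleting it leaves $N(W)$, which need not be $P_5$-free: maximality of $\mathcal{M}$ only excludes induced $P_5$'s anticomplete to $W$, and $N[W]$ itself contains the $P_5$'s of $\mathcal{M}$, so the sentence ``$G$ becomes a $P_5$-free graph after deleting a bounded set'' does not hold. Guessing the colors of the bounded core and propagating merely removes at least one color from each list in $N(W)$; list-coloring with lists of size four from a palette of five is not generically tractable, so nothing reduces to the $P_5$-free base class, and you explicitly leave this (``the technical heart'') unargued. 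The same defect occurs in the $rP_3$-free case, where $N(W')$ need not be $P_3$-free and the residual graph is not even of bounded size; the actual algorithm requires a substantially more elaborate argument than the ``bounded core plus base-class algorithm'' scheme. Hence the proposal is a correct reduction of the dichotomy to its known hard ingredients plus the two algorithmic families, but the essential proofs are missing.
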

To the best of our knowledge (and curiously enough), $k=5$ is  currently the only value of $k$ for which a complete dichotomy has been found. Among open cases, $k=3$ has attracted a great deal of attention, thanks to the special place of the \textsc{$3$-Coloring Problem} in complexity theory as a fundamental \textsf{NP}-complete problem. The following theorem summarizes how successful attempts in this direction have been so far.

\begin{theorem}\label{thm:List3status}
The \textsc{List-$3$-Coloring Problem} restricted to $H$-free graphs can be solved in polynomial time if $H$ is an induced subgraph of a graph $H'$ of one of the following two types.
\begin{itemize}
\item $H'$ is isomorphic to $P_7$ \textup{(Bonomo, Chudnovsky, Maceli, Schaudt, Stein and Zhong \cite{3P7})}.
    \item One component of $H'$ is isomorphic to $P_6$ and all other components of $H'$ are isomorphic to $P_3$ \textup{(Chudnovsky, Huang, Spirkl and Zhong \cite{L3P6+rP3})}.
\end{itemize}
\end{theorem}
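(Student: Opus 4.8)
The plan is to establish both polynomial-time results through the standard paradigm for the \textsc{List-$3$-Coloring Problem} on graphs with a long forbidden induced path: reduce the instance, after a polynomially bounded amount of branching, to a bounded number of auxiliary instances in which every list has size at most two, and then solve each of these via the folklore reduction of the \textsc{List-$2$-Coloring Problem} to $2$-\textsc{Sat}. Since being $H$-free is implied by being $H'$-free whenever $H$ is an induced subgraph of $H'$, it suffices to give the algorithm when $H'=P_7$, and when $H'$ has one component isomorphic to $P_6$ and $r$ further components isomorphic to $P_3$. In both cases I would begin with the routine preprocessing: handle the components of $G$ separately (so assume $G$ is connected); repeatedly delete a vertex $v$ with $|L(v)|\le 1$, removing $L(v)$ from the lists of its neighbors when $|L(v)|=1$; and if after this every surviving list has size at most two, finish via $2$-\textsc{Sat}. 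So we may assume $G$ is connected and has a vertex with a list of size three.

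For $P_7$-free graphs the engine is a domination argument. A connected $P_7$-free graph admits a dominating set $D$ with $G[D]$ itself $P_5$-free, hence of small diameter / with a bounded ``control structure'' (Bacs\'o--Tuza-type results). The obstacle is that $D$ itself need not have bounded size, so one cannot simply branch over the $3^{|D|}$ colorings of $D$. Following Bonomo--Chudnovsky--Maceli--Schaudt--Stein--Zhong, I would instead analyze $G$ relative to such a $D$: partition $V(G)\setminus D$ according to the trace of each vertex on $D$; use $P_7$-freeness to show that only few ``types'' can coexist and that, after coloring a cleverly chosen small control subset, all but a structurally restricted set of vertices have their list reduced to size at most two; and argue that the residual instance is either directly solvable or splits into $2$-list instances. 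The target lemma is that there is a polynomial-size family of partial colorings of small control sets such that $(G,L)$ is $L$-colorable if and only if one of them extends, and such that each extension problem has all lists of size at most two after propagation, whence $2$-\textsc{Sat} applies.

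For the case where $H'$ is the disjoint union of $P_6$ and $r$ copies of $P_3$, I would bootstrap off the polynomial-time algorithm for $P_6$-free graphs. If $G$ has no induced $P_6$, apply that algorithm directly. Otherwise fix an induced $P_6$, say $Q$. Since $G$ is $(P_6+rP_3)$-free, the subgraph induced on the set $A$ of vertices with no neighbor in $Q$ contains no $r$ pairwise disjoint copies of $P_3$; this forces $G[A]$ to have bounded structure (a Ramsey/Gallai-type argument bounds the number of ``independent $P_3$-like pieces'', so either $\alpha(G[A])$ is bounded or $G[A]$ is close to $P_3$-free, in which case it is essentially a disjoint union of complete multipartite graphs and can be list-colored directly). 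Meanwhile $N[Q]$ is handled by branching over the constantly many proper colorings of $Q$ and propagating. Combining the peeling of a $P_6$ together with its neighborhood, the $P_6$-free black box, and the $2$-\textsc{Sat} reduction yields a polynomial-time algorithm.

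The main obstacle, for $P_7$-free graphs, is the residual analysis once the control set is colored: the vertices retaining a size-three list are exactly those ``invisible'' to the control set, and proving that they constitute a recursively solvable or $2$-list-type instance requires a delicate case analysis of how induced $P_7$'s are avoided --- this is the technical heart of the argument. For the $rP_3$ extension, the corresponding difficulty is quantifying ``bounded structure away from a $P_6$'' sharply enough to interface with the $P_6$-free algorithm while keeping the total branching polynomial.
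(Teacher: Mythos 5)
This statement is not proved in the paper at all: it is a summary of two prior results, quoted with citations to Bonomo, Chudnovsky, Maceli, Schaudt, Stein and Zhong (the $P_7$ case) and to Chudnovsky, Huang, Spirkl and Zhong (the $P_6+rP_3$ case), each of which is a long and technical paper in its own right. Your proposal correctly identifies the general paradigm behind those works---branch until every list has size at most two and finish via the reduction of list-$2$-coloring to \textsc{$2$-Sat}, use a Bacs\'o--Tuza-type dominating structure for $P_7$-free graphs, and bootstrap off the $P_6$-free algorithm for $P_6+rP_3$---but it does not establish either result. The ``target lemma'' you formulate for the $P_7$ case (a polynomial-size family of partial colorings of small control sets after which all lists have size at most two) is essentially the entire content of the cited paper; you assert it and yourself flag its proof as ``the technical heart of the argument'' without supplying that analysis. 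An outline that defers exactly the hard steps is a genuine gap, not a proof.

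The $P_6+rP_3$ sketch has concrete problems beyond incompleteness. From the fact that $G[A]$ (the vertices anticomplete to a fixed induced $P_6$, $Q$) has no $r$ pairwise disjoint induced copies of $P_3$, you conclude it has ``bounded structure,'' with bounded independence number or near-$P_3$-freeness; this is unjustified (and note that $P_3$-free graphs are disjoint unions of cliques, not complete multipartite graphs). The natural quantity that does behave well is that $G[A]$ is $(P_6+(r-1)P_3)$-free, suggesting induction on $r$, which is closer to what the cited authors do. Moreover, while branching over the $O(1)$ proper colorings of $Q$ does shrink every list in $N(Q)$ to size at most two, the instance does not then decompose: vertices of $A$ and of $N(Q)$ are joined by edges, so you cannot solve $G[A]$ with the $P_6$-free black box and $N(Q)$ with \textsc{$2$-Sat} independently, and controlling this interaction is precisely where the difficulty of the cited proof lies. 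As it stands, your proposal reproduces the high-level strategy of the literature but leaves the decisive lemmas unproved, so it does not constitute a proof of the theorem.
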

In particular, determining the complexity of the \textsc{$3$-Coloring Problem} on $P_t$-free graphs (for each fixed $t\geq 8$) is a central open problem of both a structural and an algorithmic flavour. On a broader scale, graphs excluding a fixed path as an induced subgraph seem to have played a significant role in the development of modern structural graph theory. Examples include the theory of $\chi$-boundedness (see \cite{chisurvey}) and the great body of work built around the notoriously difficult Erd\H{o}s-Hajnal conjecture (see, for instance, \cite{DBLP:journals/corr/abs-1207-0016},) the simplest open case of which concerns $P_5$-free graphs. A common approach to problems in these areas \cite{tomon,orderedtree,diGS}, as well as several other problems in structural and extremal and graph theory \cite{OrderIdea1,OrderIdea2,OrderIdea3,OrderIdea4}, has been to study their variants on graphs with additional specifications, such as an orientation of edges or an ordering of vertices. The goal of this paper is to look at list-$3$-coloring $H$-free graphs from the same perspective.

Let us provide formal definitions. An \emph{ordered graph} $G$ is a triple $(V,E,\varphi)$ such that $(V,E)$ is a graph with vertex set $V$ and edge set $E$, and $\varphi:V \rightarrow \mathbb{R}$ is an injective function. We say $\varphi$ is the \emph{ordering} of $G$. For an ordered graph $G=(V,E,\varphi)$, we define $V(G)=V$, $E(G)=E$, and $\varphi_G= \varphi$. 
	For convenience, we sometimes write the ordered graph $(V,E,\varphi)$ as $(G, \varphi )$ where $G=(V,E)$ is a graph. For $X\subseteq V$, we define $G[X]=(X,\{e\in E:e\subseteq X\},\varphi|_X)$, and $G\setminus X=G[V(G)\setminus X]$. We also define $-G=(V,E,v\mapsto -\varphi_G(v))$. 
	
	Given an ordered graph $G=(V,E,\varphi)$, an ordered graph $G'=(V',E',\varphi')$ is \emph{isomorphic} to $G$ if there exists a bijective function $f:V'\rightarrow V$ such that for any two vertices $v$ and $w$ in $V'$, $\varphi'(v)<\varphi'(w)$ if and only if $\varphi(f(v))<\varphi(f(w))$, and $vw\in E'$ if and only if $f(v)f(w)\in E$. We denote $G$ and $G'$ being isomorphic by $G'\cong G$. An ordered graph $H$ is an \emph{induced subgraph} of $G$ if there exists a set $X\subseteq V(G)$ such that $H\cong G[X]$; otherwise $G$ is \emph{$H$-free}.
	
	Let $G=(V,E,\varphi)$ be an ordered graph and let $k\in \mathbb{N}$. By a \emph{$k$-coloring} and a \emph{$k$-list-assignment} of $G$, we mean a $k$-coloring and a $k$-list-assignment of the graph $(V,E)$. Also, for a $k$-list-assignment $L$ of $G$, an \emph{$L$-coloring} of $G$ is an $L$-coloring of $(V,E)$. We say $G$ is \emph{$L$-colorable} if $(V,E)$ is $L$-colorable. For a fixed $k\in \mathbb{N}$, the \textsc{Ordered Graph List-$3$-Coloring Problem} is to decide, given an instance $(G,L)$ consisting of an ordered graph $G$ and a $k$-list-assignment $L$ of $G$, whether $G$ admits an $L$-coloring. 
 
 Let us emphasize that, as opposed to our notion of ``induced subgraph,'' our notion of ``coloring'' for ordered graphs ignores the ordering of vertices. In particular, the \textsc{Ordered Graph List-$3$-Coloring Problem} is \textsf{NP}-complete, as so is the usual \textsc{List-$3$-Coloring Problem}. We establish a nearly complete dichotomy for the complexity of the \textsc{Ordered Graph List-$3$-Coloring Problem} on $H$-free ordered graphs, where $H$ is a fixed ordered graph. Our first two theorems, below, give a complete dichotomy for the case $|E(H)|\neq 2$.
	\begin{theorem}\label{mainwhale}
		The \textsc{Ordered Graph List-3-Coloring Problem} restricted to $H$-free ordered graphs is polynomial-time solvable if $H$ has at most one edge.
	\end{theorem}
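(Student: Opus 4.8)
The plan is to separate the cases $|E(H)|=0$ and $|E(H)|=1$; the former is essentially trivial, and the latter carries all the work.

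Suppose first that $H$ has no edge, so $H$ is a stable set on $t=|V(H)|$ vertices (the ordering of $H$ plays no role here, since any $t$-element stable set of $G$, taken in any order, is an induced copy of $H$). Thus $G$ is $H$-free if and only if $\alpha(G)\le t-1$. Given an instance $(G,L)$: if $|V(G)|\ge 3t$, then $G$ is not $L$-colorable, because a proper $3$-coloring would partition $V(G)$ into three stable sets, one of which has size at least $t$; so we answer ``no''. Otherwise $|V(G)|<3t$ is bounded by a constant, and we decide $L$-colorability by checking all at most $3^{|V(G)|}=O(1)$ colorings.

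From now on $|E(H)|=1$. We shall reduce to the \textsc{List-$2$-Coloring Problem} --- that is, the \textsc{List-$3$-Coloring Problem} restricted to instances in which every list has size at most $2$ --- which is polynomial-time solvable by a standard reduction to $2$-\textsc{Sat}. Write $H$ as $h_1,\dots,h_t$ in increasing $\varphi_H$-order, with unique edge $h_ih_j$ for some $i<j$, and set $a=i-1$, $b=j-i-1$, $c=t-j$; thus $a+b+c+2=t$, and $a,b,c$ are constants. Since list-colorability is decided componentwise, we may assume $G$ is connected, and by the symmetry $(G,H)\mapsto(-G,-H)$ (which interchanges $a$ and $c$) we may assume $a\le c$. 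The core of the proof is the following structural claim: for every instance $(G,L)$ with $G$ a connected $H$-free ordered graph there is a set $W\subseteq V(G)$ with $|W|\le g(H)$, for some function $g$ depending only on $H$, such that every vertex of $G\setminus W$ either has a neighbour in $W$ or has a list of size at most $2$. Granting this, the algorithm is immediate: enumerate the at most $3^{|W|}=O(1)$ proper colorings $\psi$ of $G[W]$ respecting $L$; for each of them, delete $W$ and, for every remaining vertex $u$, delete from $L(u)$ every colour used by a neighbour of $u$ in $W$; by the claim the residual instance has all lists of size at most $2$, so it is solved in polynomial time; and we output ``yes'' exactly when some $\psi$ extends.

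It remains to prove the structural claim, which is where essentially all the difficulty lies. The rough idea is to exploit the last vertex $v$ of $G$ in the order $\varphi_G$: being $H$-free strongly restricts the non-neighbourhood $V(G)\setminus N[v]$ --- it bounds its independence number, and when $c$ is small it forces $V(G)\setminus N[v]$ to be a stable set --- and iterating this inside $N(v)$ produces a short sequence $v=z_1,z_2,\dots,z_m$ of pairwise adjacent vertices, so that $m\le 3$ unless $G$ contains $K_4$ and $(G,L)$ is a trivial ``no'', together with tight control over the adjacencies between the remaining vertices and $z_1,\dots,z_m$. One then argues, using $H$-freeness once more together with the relative positions in the order of the $z_\ell$ and of the offending stable sets, that the vertices of $G$ adjacent to none of $z_1,\dots,z_m$ --- the only ones that could retain a full list once $z_1,\dots,z_m$ are coloured --- are bounded in number, and one takes $W=\{z_1,\dots,z_m\}$ together with these vertices. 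The main obstacle is to carry this through uniformly over all positions $(a,b,c)$ of the edge of $H$: the cases $b=0$ versus $b\ge 1$, and $a=0$ versus $a\ge 1$, call for somewhat different extremal estimates, but all of them instantiate the same dichotomy ``either a copy of $H$ appears, or the part of $G$ under consideration is small or highly structured''.
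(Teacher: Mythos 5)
Your handling of $|E(H)|=0$ is fine, but the case $|E(H)|=1$ rests on a structural claim that is false. Take $H=J_1$, the one-edge ordered graph with $a=b=c=1$ (one isolated vertex before, one strictly between, one after the ends of its edge), and let $G$ be the path $v_1v_2\cdots v_n$ with the natural ordering $\varphi_G(v_i)=i$ and $L(v)=\{1,2,3\}$ for every vertex. Every edge of $G$ joins consecutive vertices, so no edge of $G$ has any vertex strictly between its ends in the ordering; hence $G$ contains no induced copy of $H$ (or of any one-edge ordered graph whose edge has an isolated vertex under it). $G$ is connected, all lists are full, and a set $W$ with your property would have to dominate all of $V(G)\setminus W$; but the domination number of $P_n$ is about $n/3$, which is unbounded, so no $W$ of size $g(H)$ exists. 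The same example refutes the intermediate steps of your sketch: the non-neighbourhood of the last vertex $v_n$ is $\{v_1,\dots,v_{n-2}\}$, whose independence number is about $n/2$, so $H$-freeness does not bound it. Consequently the scheme ``guess a constant-size $W$, update lists, and finish by list-$2$-colouring'' cannot work for one-edge $H$ whose edge has a vertex under it.

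This is precisely why the paper's argument is more elaborate: it embeds $H$ into $J_w$ and runs a dynamic program that sweeps the ordering from left to right along the \emph{maximal} edges of $G$. For each maximal edge $e$ it guesses, not one global bounded set, but a partial colouring of at most $27w^2+3$ vertices per colour lying under $e$ (the sets $\Gamma_w(G,L,e)$; their existence relative to a hypothetical colouring is exactly what the Erd\H{o}s--Szekeres argument of Lemma \ref{ABexists} provides), and the recursive notion of a ``successful'' guess propagates compatibility from each maximal edge to the next, with each local extension checked via list-$2$-colouring after an initial enumeration of the first and last $w$ vertices of each colour class and the $K_4$/small-colour-class tests. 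Some propagation mechanism of this kind along the order is needed; a single bounded guess followed by list-$2$-colouring is not enough, so the core of your proposal has a genuine gap.
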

	\begin{theorem}\label{OrderedNPatleast3}
	The \textsc{Ordered Graph List-3-Coloring Problem} restricted to $H$-free ordered graphs is \textsf{NP}-complete if $H$ has at least three edges..
	\end{theorem}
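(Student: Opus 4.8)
\emph{Proof sketch.} Membership in \textsf{NP} is immediate, since an $L$-coloring is a polynomially verifiable certificate; so it suffices, for a fixed ordered graph $H$ with $|E(H)|\geq 3$, to prove that the \textsc{Ordered Graph List-$3$-Coloring Problem} restricted to $H$-free ordered graphs is \textsf{NP}-hard. Write $H_0$ for the underlying unordered graph $(V(H),E(H))$ of $H$. Two elementary reductions are used throughout. First, if $F$ is an ordered induced subgraph of $H$, then every $F$-free ordered graph is $H$-free, so \textsf{NP}-hardness on $F$-free ordered graphs transfers to $H$-free ordered graphs. Second, if $G$ is a graph with no induced subgraph isomorphic to $H_0$, then for \emph{any} ordering $\varphi$ of $V(G)$ the ordered graph $(G,\varphi)$ is $H$-free, since an ordered induced copy of $H$ in $(G,\varphi)$ is in particular an induced copy of $H_0$ in $G$; thus $G\mapsto(G,\varphi)$ turns an instance of \textsc{List-$3$-Coloring} on $H_0$-free graphs into an equivalent instance of our problem on $H$-free ordered graphs.

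Now split according to $H_0$. If $H_0$ contains a cycle, then by Theorem~\ref{thm:cycle} the \textsc{$3$-Coloring Problem}, and hence the \textsc{List-$3$-Coloring Problem}, is already \textsf{NP}-complete on $H_0$-free graphs, and the second reduction above completes this case. If $H_0$ is cycle-free but has a vertex of degree at least $3$, then (being a forest) $H_0$ contains an induced claw, and Theorem~\ref{thm:claw} completes this case in the same way. We may therefore assume that $H_0$ is a linear forest (a disjoint union of paths) with at least three edges, which is the substance of the proof. A short argument examining the component sizes of $H_0$ shows that $H_0$ then has an induced subgraph isomorphic to one of $P_4$, $P_3\cup P_2$ (a path on three vertices together with a disjoint edge), or $3P_2$ (three pairwise disjoint edges). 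Taking such a subgraph of $H$ with its inherited ordering and applying the first reduction above, it suffices to prove \textsf{NP}-hardness of our problem on $F$-free ordered graphs for \emph{every} ordered graph $F$ whose underlying graph is $P_4$, $P_3\cup P_2$, or $3P_2$: a finite list, and shorter still up to reversal of the order, which preserves complexity.

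The plan for this finite list is as follows. Many of these $F$ contain, as an ordered induced subgraph, either a \emph{monotone} $P_3$ (three vertices $x<y<z$ with $xy,yz\in E(F)$ and $xz\notin E(F)$) or one of the two-edge ordered graphs already proved \textsf{NP}-hard in the two-edge part of this paper, and so are handled immediately by the first reduction. For example, \textsc{List-$3$-Coloring} is \textsf{NP}-hard on monotone-$P_3$-free ordered graphs: order a hard \textsc{List-$3$-Coloring} instance on a bipartite graph with one part entirely before the other, and observe that in such an ordered graph no induced $P_3$ has its degree-$2$ vertex in the middle of the order. For the \emph{residual} $F$ — the orderings of $P_4$ in which the two colour classes of the $P_4$ are separated in the order (both vertices of one class before both vertices of the other), and the analogous residual orderings of $P_3\cup P_2$ and $3P_2$ — one gives, case by case, a direct polynomial reduction from a suitable \textsf{NP}-complete problem (\textsc{List-$3$-Coloring} on a restricted graph class, \textsc{Not-All-Equal $3$-SAT}, or \textsc{$3$-Coloring}) into a class of ordered graphs designed so that, by construction, it contains no ordered induced copy of $F$: for instance ordered graphs whose vertex set is partitioned into a few consecutive intervals with carefully controlled adjacency between and within the intervals.

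The main obstacle is exactly this residual case analysis, and within it the verification of $F$-freeness rather than the correctness of the reductions. Forbidding a fixed small ordered linear forest is a weak condition — it is satisfied by many orderings but not by arbitrary graphs — so the hard instances cannot be taken off the shelf but must be built so that their vertices admit an ordering ``around'' $F$; confirming that no ordered induced copy of $F$ survives in the constructed ordered graphs is where the real work concentrates.
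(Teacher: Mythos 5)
Your overall strategy is the same as the paper's: reduce to the observation that if $H$ has at least three edges then its underlying graph contains a cycle, a claw, $P_4$, $P_3+P_2$ or $3P_2$; dispose of the first two cases by Theorems \ref{thm:cycle} and \ref{thm:claw} via the reduction ``order an unordered hard instance arbitrarily''; dispose of the orderings containing a monotone $P_3$ (or two ``sequential'' disjoint edges) by ordering a hard bipartite \textsc{List-$3$-Coloring} instance with one side entirely before the other (this is exactly Theorem \ref{NP-Bip}); and then finish the remaining orderings of $P_4$, $P_3+P_2$ and $3P_2$ by bespoke reductions. The problem is that for these remaining orderings you only assert that suitable reductions exist (``one gives, case by case, a direct polynomial reduction \ldots into a class of ordered graphs designed so that, by construction, it contains no ordered induced copy of $F$''), and you yourself identify this as the main obstacle. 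That residual analysis is precisely the content of Theorems \ref{NP-XMT} and \ref{NP-XMST} (the NAE3SAT gadget with three carefully chosen vertex orderings, handling e.g. $J_3$, $J_5$, $J_8$, $J_{10}$, $J_{13}$, $J_{14}$ and their reverses, assembled into Theorem \ref{P4&P3+P2-NP}), and of Lemma \ref{Realization} together with Theorems \ref{H3}, \ref{H4} and \ref{H5} (the ``realization'' constructions for the three-edge orderings $M_2$, $M_3$, $M_4$, $M_5$ of $3P_2$). Without these constructions and their $F$-freeness verifications, the theorem is not proved; the sketch stops exactly where the work begins.

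Two further points show the gap is not merely one of detail. First, your fallback of invoking the two-edge hardness results does not cover all $3P_2$ orderings: for instance $M_2$ (edges $v_1v_6,v_2v_5,v_3v_4$) contains, among its two-edge induced ordered subgraphs, graphs of type $M_7$/$M_8$, which are precisely the open cases of the paper, so hardness for $M_2$-free graphs must be established directly, as the paper does in Theorem \ref{H3}. Second, your suggested shape for the residual constructions --- ``a few consecutive intervals with carefully controlled adjacency'' --- is too weak a template: the paper's constructions for $M_3$, $M_4$, $M_5$ require replacing each edge of a hard instance by three long paths laid out across $\Theta(m)$ consecutive levels, with matchings between levels whose orientation (crossing versus nested) alternates and with special ``closing'' vertices placed either inside a level, at the far right, or by a shifting trick, and the delicate part is exactly verifying that no ordered induced copy of the forbidden $M_i$ survives. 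So the proposal is a correct outline of the paper's route, but it leaves the core of the proof --- the residual case constructions and their freeness proofs --- unproven.
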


\begin{figure}[t]
		\centering 
			\begin{tikzpicture}[scale=0.8]
		\node [label=below: $u_1$] (u1) at (2,3){};
		\node [label=below: $u_2$] (u2) at (6,3){};
		\draw (u1) edge [bend left = 30]  (u2);
		\node [color=gray]  at (0,3){};
		\node [color=gray]  at (1,3){};
		\node [color=gray]  at (3,3){};
		\node [color=gray]  at (4,3){};
		\node [color=gray]  at (5,3){};
		\node [color=gray]  at (7,3){};
		\node [color=gray]  at (8,3){};
		
		\node [color=gray]  at (0,1){};
		\node [color=gray]  at (1,1){};
		\node [color=gray]  at (8,1){};
		\node [label=below: $u_1$] (u1) at (2,1){};
		\node [label=below: $u_2$] (u2) at (5,1){};
		\node [label=below: $u_3$] (u3) at (7,1){};
		\draw (u1) edge [bend left = 30]  (u3);
		\draw (u1) edge [bend left = 25]  (u2);
	\end{tikzpicture}	
		\caption{The cases of $H$ which are polynomial-time solvable.}\label{fig-orderedpolytime}
	\end{figure}
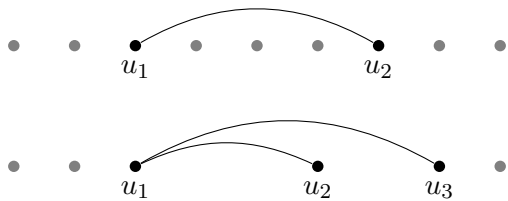
 
	

	In order to state our results concerning the case $|E(H)|=2$, we need to define some special ordered graphs, each with exactly two edges. The strange naming is due to the remaining ordered graphs that are to be defined in Section \ref{Sec-OrderedNP}.
	Let $U'=\{u_1, u_2, u_3, u_4, u_5\}$ and $U=U'\backslash\{u_5\}$, the ordering $\varphi':U'\rightarrow \mathbb{R}$ with $u_i\mapsto i$ for $i\in [5]$. Let $V=\{v_1,v_2,v_3,v_4,v_5,v_6\}$ and $\varphi:V\rightarrow \mathbb{R}$ with $v_i\mapsto i$ for $i\in [6]$.
	\begin{itemize}
		\item Let $J_9=(U,\{u_1u_2,u_3u_4\}, \varphi'|_U)$.
		\item Let $J_{16}=(U\backslash \{u_4\}, \{u_1u_2,u_1u_3\}, \varphi'|_{U\backslash \{u_4\}})$.
		\item Let $M_1=(V,\{v_1v_6,v_2v_5\},\varphi)$.
		\item Let $M_5=(V\backslash \{v_6\},\{v_1v_5,v_2v_3\},\varphi|_{V\backslash \{v_6\}})$.
		\item Let $M_6=(V\backslash \{v_5,v_6\},\{v_1v_3,v_2v_4\},\varphi|_{V\backslash \{v_5,v_6\}})$.
		\item Let $M_7=(V\backslash \{v_5,v_6\},\{v_1v_4,v_2v_3\},\varphi|_{V\backslash \{v_5,v_6\}})$.
		\item Let $M_8=(V\backslash \{v_6\},\{v_1v_5,v_2v_4\},\varphi|_{V\backslash \{v_6\}})$.		
	\end{itemize}
	
	Also, given an ordered graph $G=(V,E,\varphi)$ and $k,l\in \mathbb{N}\cup \{0\}$, we define $G(k,l)$ as the ordered graph obtained from $G$ by adding $k+l$ isolated vertices $\{a_i, b_j: i\in [k], j\in [l]\}$ to $G$ and extending $\varphi$ from $V$ to $V\cup \{a_i, b_j: i\in [k], j\in [l]\}$ by defining $\varphi(a_i)=\min_{v\in V} \varphi(v)-(k+1-i)$ for each $i\in[k]$ and $\varphi(b_i)=\max_{v\in V} \varphi(v)+i$ for each $i\in [l]$. The following theorem is a full dichotomy for the case where the two edges of $H$ share an end.
	
	\begin{theorem}\label{shareanend}
		Let $H$ be an ordered graph with $|E(H)|=2$ where the two edges of $H$ share an end. Then the \textsc{Ordered Graph List-$3$-Coloring Problem} restricted to $H$-free ordered graphs is polynomial-time solvable if $H$ is isomorphic to   $J_{16}(k,l)$ or $-J_{16}(k,l)$ for some $k,l\in \mathbb{N}\cup \{0\}$, and \textsf{NP}-complete otherwise.
	\end{theorem}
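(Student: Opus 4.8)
The plan is to prove the two directions of the dichotomy separately.

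\textbf{The polynomial-time direction.} Since $G \mapsto -G$ preserves the underlying graph --- hence $L$-colourability --- and carries $J_{16}(k,l)$-free ordered graphs to $(-J_{16}(k,l))$-free ones, it suffices to treat $H \cong J_{16}(k,l)$. The key observation is that $J_{16}$ is an ordered ``cherry'' (three-vertex path) whose degree-two vertex is the least of its three vertices: hence if $G$ is $J_{16}$-free, then for every $v \in V(G)$ the set $N^{+}(v)$ of neighbours of $v$ that are greater than $v$ in $\varphi_{G}$ is a clique, for otherwise two nonadjacent vertices of $N^{+}(v)$ together with $v$ would induce $J_{16}$. If in addition $G$ is $K_{4}$-free --- which we may assume after a polynomial-time test, as no $K_{4}$ is $L$-colourable when lists lie in $\{1,2,3\}$ --- then listing $V(G)$ in increasing order of $\varphi_{G}$ is a perfect elimination ordering, so $G$ is chordal with clique number at most three and therefore has treewidth at most two; and \textsc{List-$3$-Coloring} on bounded-treewidth graphs is settled by the standard dynamic program. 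This disposes of the case $k=l=0$.

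For general $k,l$ (as soon as $k+l\ge 1$) the class of $J_{16}(k,l)$-free ordered graphs is strictly larger --- it already contains suitably ordered complete bipartite graphs, which have unbounded treewidth --- so one must analyze which copies of $J_{16}$ occur. If $\{v,a,b\}$ induces $J_{16}$ in a $J_{16}(k,l)$-free graph $G$ (with $v$ the degree-two vertex), then there is no collection of $k$ pairwise-nonadjacent vertices below $v$ anticomplete to $\{v,a,b\}$ together with $l$ pairwise-nonadjacent vertices above $b$ anticomplete to $\{v,a,b\}$ and to those $k$ vertices. The plan is to use this to show that, after deleting all copies of $K_{4}$, the graph lies in a class on which \textsc{List-$3$-Coloring} is tractable; concretely, I would aim to bound the clique-width of such graphs by a function of $k$ and $l$ (consistent with the examples above, since complete bipartite graphs, complete graphs and treewidth-two graphs all have bounded clique-width), perhaps by exhibiting a recursive decomposition into ``$J_{16}$-free'' parts glued along small ``join-like'' boundaries; more modestly, any decomposition into bounded-treewidth pieces along controlled interfaces would suffice. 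Granting such a bound, we are done: for constant $k$ the \textsc{List-$k$-Coloring Problem} is expressible in monadic second-order logic (one asks for a partition into stable sets $V_{1},V_{2},V_{3}$ with $V_{i}\subseteq L^{(i)}$) and hence is polynomial-time --- indeed linear-time --- solvable on any class of bounded clique-width. Proving this structural statement is the main obstacle.

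\textbf{The \textsf{NP}-complete direction.} Let $H$ have two edges sharing an end; write $c$ for the common end and $a,b$ for the (interchangeable) leaves, all remaining vertices of $H$ being isolated. If $c$ lies below both $a$ and $b$ with every isolated vertex either below $c$ or above both leaves, then $H\cong J_{16}(k,l)$, and symmetrically $H\cong -J_{16}(k,l)$ when $c$ lies above both leaves; so in every other case either (i) $c$ lies strictly between $a$ and $b$, or (ii) $c$ lies below (resp.\ above) both leaves while some isolated vertex lies strictly between $c$ and the block $\{a,b\}$, or strictly between $a$ and $b$. In case (i), restricting $H$ to $\{a,c,b\}$ yields the three-vertex ordered graph $C^{\ast}$ with $c$ between its two neighbours; in case (ii), restricting $H$ to $\{c,a,b,x\}$ for one interior isolated vertex $x$ yields one of the four-vertex ordered graphs $(c<x<a<b)$ or $(c<a<x<b)$ (with $c$ the degree-two vertex, $x$ isolated), or the image of one of these under $G\mapsto -G$. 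Since any induced subgraph $H'$ of $H$ makes every $H'$-free ordered graph $H$-free, it is enough to prove \textsf{NP}-completeness for the finitely many ``core'' ordered graphs $C^{\ast}$, $(c<x<a<b)$, $(c<a<x<b)$ --- their $G\mapsto -G$ images following by the same symmetry. Each core is handled by a polynomial reduction from \textsc{List-$3$-Coloring} (or a restriction of it of the kind used in the proof of Theorem~\ref{OrderedNPatleast3}): from a hard instance we build an ordered graph with the same $L$-colourability and fix a linear order --- inserting a bounded number of ``padding'' vertices between the blocks of the gadget construction so that in case (ii) any candidate copy of the cherry already carries an edge or a forbidden adjacency in the isolated slot the core requires --- so that no induced copy of the relevant core arises.

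Overall, the case analysis and gadget reductions of the \textsf{NP}-complete direction are routine but lengthy, whereas the genuine difficulty is the clique-width/decomposition statement behind the polynomial-time algorithm for $k,l\ge 1$.
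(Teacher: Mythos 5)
Your poly-time direction is complete only for $k=l=0$ (where your argument---forward neighbourhoods are cliques, so the ordering is a perfect elimination ordering, and $K_4$-freeness gives treewidth at most two---is exactly the paper's starting observation). For $k+l\ge 1$, which is the actual content of the theorem, you only conjecture a bounded clique-width/decomposition statement and concede it is "the main obstacle''; that is a genuine gap, and it is not how the paper proceeds (nor is it clear the clique-width bound is even true). The paper never bounds any width parameter of the whole graph: in Theorem \ref{H(k,l)} it enumerates polynomially many guesses of the first $k$ and last $l$ vertices of each colour class (Lemma \ref{fwdnbr}), uses $J_{16}(k,l)$-freeness to show that after the resulting list refinement every vertex whose list still has size at least two has at most two forward neighbours among such vertices, then guesses a further $\mathcal{O}(1)$ vertices' colours as "padding'' (Lemma \ref{chordal}) so that the remaining undetermined vertices induce a chordal graph, which is handled by Theorem \ref{thm:chordalL3Col}. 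The guessing over polynomially many refinements replaces the global structural statement you would need.

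Your hardness direction correctly reduces to the three cores: the ordered $P_3$ with the centre in the middle ($J_{15}$) and the two cherry-plus-interior-isolated-vertex graphs, which are exactly $J_{11}$ ($c<x<a<b$) and $J_{10}$ ($c<a<x<b$), with the reversed cases following from the $G\mapsto -G$ symmetry (Corollary \ref{NP-neg}); this matches the paper's case analysis. But the hardness of the cores themselves is only asserted, and the sketched mechanism---"inserting padding vertices so that no induced copy of the core arises''---is backwards: each core is a cherry (or path) together with an isolated vertex in a prescribed slot, so adding vertices to a construction can only create induced copies, never destroy them. What is required is a gadget whose \emph{global} ordering guarantees that for every cherry, every vertex in the forbidden interval is adjacent to one of its three vertices. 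For $J_{15}$ this is easy (order one side of a hard bipartite \textsc{List-3-Coloring} instance entirely before the other, Theorem \ref{NP-Bip}), but for $J_{10}$ and $J_{11}$ the paper needs a bespoke NAE3SAT construction with carefully designed orderings (Theorem \ref{NP-XMT}), and nothing in your proposal supplies these gadgets or an argument that they exist.
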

	Figure \ref{fig-orderedpolytime} depicts ordered graphs $H$ for which we prove, in Theorems \ref{mainwhale}  and \ref{shareanend}, that the \textsc{Ordered Graph List-$3$-Coloring Problem} restricted to $H$-free ordered graphs can be solved in polynomial time (gray vertices represent arbitrarily many isolated vertices.)
	
Let us now consider ordered graphs $H$ with $|E(H)|=2$ where the two edges of $H$ do not share an end. In this case, we prove the following hardness result.
	\begin{theorem}\label{OrderedNPdemo}
	Let $H$ be an ordered graph containing an induced subgraph isomorphic to $J_9$, $M_1$, $M_5$ or $-M_5$. Then the \textsc{Ordered Graph List-3-Coloring Problem} restricted to $H$-free ordered graphs is \textsf{NP}-complete.
	\end{theorem}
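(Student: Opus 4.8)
The plan is to first reduce the statement to a handful of base cases, and then, for each, give a polynomial reduction from the ordinary \textsc{List-$3$-Coloring Problem}, which is NP-complete. Membership of the \textsc{Ordered Graph List-$3$-Coloring Problem} in NP is immediate, since an $L$-coloring is a polynomial-size certificate. For the hardness, recall that if an ordered graph $J$ is an induced subgraph of $H$, then every $J$-free ordered graph is $H$-free; hence it suffices to treat $H\in\{J_9,M_1,M_5,-M_5\}$. Moreover, for every ordered graph $G$ and every $3$-list-assignment $L$, the instance $(G,L)$ is $L$-colorable if and only if $(-G,L)$ is (our notion of coloring ignores the order), while $G$ is $(-M_5)$-free if and only if $-G$ is $M_5$-free; composing the involution $G\mapsto -G$ with a reduction that outputs $M_5$-free instances therefore yields one that outputs $(-M_5)$-free instances. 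So it remains to prove NP-completeness on $J$-free ordered graphs for each $J\in\{J_9,M_1,M_5\}$.

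For each such $J$, given an instance $(G,L)$ of \textsc{List-$3$-Coloring}, the idea is to construct an equivalent ordered instance $(G',L')$ with $G'$ being $J$-free. First, replace every edge $uv$ of $G$ by a fixed, bounded-size gadget on new vertices with prescribed lists that forces $\phi(u)\neq\phi(v)$ and imposes no other restriction — for instance three internally disjoint paths of length three from $u$ to $v$ in which, on each path, the two internal vertices share a common list equal to one of the three $2$-element subsets of $\{1,2,3\}$, with the three subsets used once each. One checks readily that $\phi(u)=\phi(v)$ is then impossible in any $L'$-coloring, while any proper assignment of distinct colors to $u$ and $v$ extends over each path independently, so $(G',L')$ is $L'$-colorable if and only if $(G,L)$ is $L$-colorable. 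If $J\in\{M_1,M_5\}$, one additionally pads each gadget with a few vertices of full list so as to create enough adjacencies among the vertices lying strictly between the endpoints of any edge of $G'$; this is harmless for colorability but blocks the isolated vertex that occurs (strictly inside the nested edge for $M_1$, just beside the short nested edge for $M_5$) in those two patterns. Second — and this is the heart of the proof — one equips $G'$ with a linear order $\varphi'$ witnessing $J$-freeness: the internal gadget vertices are interleaved with their endpoints, and the gadgets of distinct edges are interleaved with one another, so that for $J_9$ no two vertex-disjoint non-adjacent edges of $G'$ occur one entirely before the other (e.g.\ all edges span a common position), and for $M_1$, $M_5$ no six (resp.\ five) vertices realize the corresponding nested-edge-plus-gap configuration.

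The main obstacle is the last step: certifying that $G'$ with the chosen order is $J$-free. Because our induced-subgraph relation is order-sensitive, this does not reduce to an unordered statement; instead one must run through all the ways in which $|V(J)|$ vertices of $G'$ — possibly distributed over the original graph and several distinct gadgets at once — can be laid out by $\varphi'$, and check in each configuration that the induced ordered subgraph is not isomorphic to $J$. Making the per-gadget suborders, the global interleaving, and (for $M_1$, $M_5$) the padding cohere so that this holds simultaneously, while keeping the reduction polynomial and the equivalence $(G',L')\leftrightarrow(G,L)$ intact, is where the genuine care lies. A secondary, more routine, difficulty is to keep the forcing gadget and the padding small and compatible with the particular ordering requirements of each of $J_9$, $M_1$, $M_5$; alternatively, to ease these constraints one may reduce instead from \textsc{List-$3$-Coloring} restricted to a suitably structured class (for instance triangle-free graphs, for which hardness already follows from Theorems \ref{thm:cycle} and \ref{thm:claw}).
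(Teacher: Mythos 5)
Your outer layers are fine: reducing to the base cases $J_9$, $M_1$, $M_5$ by induced-subgraph containment, handling $-M_5$ via $G\mapsto -G$ (the paper's Corollary \ref{NP-neg}), and the unordered colour-inequality gadget of three internally disjoint paths carrying the three $2$-element lists (essentially the paper's Lemma \ref{Realization}) all work. But the actual content of the theorem is exactly the step you defer --- exhibiting an explicit ordering of the constructed graph and verifying the order-sensitive freeness --- and the mechanisms you sketch for it do not work. For $M_1$ and $M_5$ you propose to ``pad'' the gadgets with extra full-list vertices ``so as to create adjacencies among the vertices lying strictly between the endpoints'' and thereby ``block the isolated vertex'' of the pattern. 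Adding new vertices cannot create adjacencies among already existing vertices, and, more fundamentally, induced ordered subgraphs are preserved under adding vertices, so no amount of padding can destroy a copy of $M_1$ or $M_5$ that the un-padded construction already contains. What is needed is a global design of the order itself: the paper arranges the gadget vertices in levels and makes the matchings between consecutive levels pairwise \emph{crossing} rather than nested, with the path-closing vertices $Z_i$ placed at consecutive positions inside a level (Theorem \ref{H3}, ordering $\tau_5$, for $M_1$), and for $M_5$ it closes each path by a sequence of gradual positional ``switches'' with spacing chosen so that no five vertices realize the pattern (Theorem \ref{H5}, ordering $\tau_8$). These constructions and their freeness verifications are the heart of the proof and are absent from your proposal.

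The $J_9$ case has a parallel problem: your suggestion that ``all edges span a common position'' would force the constructed graph $G'$ to be bipartite, with the two sides of that position as colour classes; replacing each edge by paths of length three preserves odd cycles (a $k$-cycle becomes a $3k$-cycle), so this fails for general inputs, and triangle-freeness of the source instance does not help. The relevant structured class is bipartite graphs: the paper reduces from the \textsc{List-$3$-Coloring Problem} on bipartite graphs, \textsf{NP}-complete by Theorem \ref{BipL3COl}, and then needs no gadget at all --- ordering one side entirely before the other makes every edge run from the first block to the second, so no two disjoint edges occur one entirely before the other and the instance is $J_9$-free (Theorem \ref{NP-Bip}). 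As it stands, your proposal records the easy reductions and a correct inequality gadget but leaves the order constructions unproved, and the hints offered for them are either unworkable (padding) or incomplete (interleaving asserted to avoid the patterns, with no construction or case analysis); so there is a genuine gap.
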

Note that Theorems \ref{mainwhale}, \ref{OrderedNPatleast3}, \ref{shareanend} and \ref{OrderedNPdemo}, together, determine the complexity of the \textsc{Ordered Graph List-$3$-Coloring Problem} restricted to $H$-free ordered graphs for all $H$, except for the following three cases, which remain open (see Figure \ref{fig-orderedopen} for a depiction, where gray vertices represent arbitrarily many isolated vertices.)
	\begin{itemize}
		\item $H$ has exactly two edges, and the ordered graph obtained from $H$ by removing isolated vertices is isomorphic to $M_6$.
		\item  $H$ is isomorphic to $M_7(k,l)$ for some $k,l\in \mathbb{N}\cup \{0\}$.
		\item  $H$ is isomorphic to $M_8(k,l)$ for some $k,l\in \mathbb{N}\cup \{0\}$.
	\end{itemize}
	
		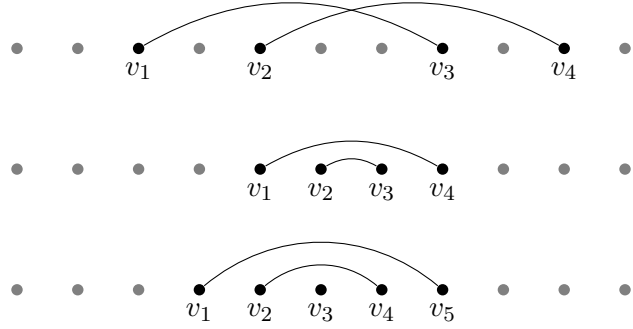
\begin{figure}[t]
		\centering 
			\begin{tikzpicture}[scale=0.8]
		\node [label=below: $v_1$] (u1) at (2,6){};
		\node [label=below: $v_2$] (u2) at (4,6){};
		\node [label=below: $v_3$] (u3) at (7,6){};
		\node [label=below: $v_4$] (u4) at (9,6){};
		\draw (u1) edge [bend left = 30]  (u3);
		\draw (u2) edge [bend left = 30]  (u4);
		\node [color=gray]  at (0,6){};
		\node [color=gray]  at (1,6){};
		\node [color=gray]  at (3,6){};
		\node [color=gray]  at (5,6){};
		\node [color=gray]  at (6,6){};
		\node [color=gray]  at (8,6){};
		\node [color=gray]  at (10,6){};
		
		\node [label=below: $v_1$] (u1) at (4,4){};
		\node [label=below: $v_2$] (u2) at (5,4){};
		\node [label=below: $v_3$] (u3) at (6,4){};
		\node [label=below: $v_4$] (u4) at (7,4){};
		\draw (u1) edge [bend left = 30]  (u4);
		\draw (u2) edge [bend left = 30]  (u3);
		\node [color=gray]  at (0,4){};
		\node [color=gray]  at (1,4){};
		\node [color=gray]  at (2,4){};
		\node [color=gray]  at (3,4){};
		\node [color=gray]  at (8,4){};
		\node [color=gray]  at (9,4){};
		\node [color=gray]  at (10,4){};
		
		\node [label=below: $v_1$] (u1) at (3,2){};
		\node [label=below: $v_2$] (u2) at (4,2){};
		\node [label=below: $v_3$] (u3) at (5,2){};
		\node [label=below: $v_4$] (u4) at (6,2){};
		\node [label=below: $v_5$] (u5) at (7,2){};
		\draw (u1) edge [bend left = 40]  (u5);
		\draw (u2) edge [bend left = 40]  (u4);
		\node [color=gray]  at (0,2){};
		\node [color=gray]  at (1,2){};
		\node [color=gray]  at (2,2){};
		\node [color=gray]  at (8,2){};
		\node [color=gray]  at (9,2){};
		\node [color=gray]  at (10,2){};
	\end{tikzpicture}	
		\caption{The cases of $H$ which are still open.}\label{fig-orderedopen}
	\end{figure}
	
	The organization of this paper is as follows. In Section \ref{sec:preliminary}, we set some notation and terminology to be used throughout. In section \ref{sec:oneedge}, we prove Theorem \ref{mainwhale}. Section \ref{Sec-H(k,l)proof} contains the polynomial-time algorithm promised in Theorem \ref{shareanend}. Finally, Section \ref{Sec-OrderedNP} is devoted to our hardness results that is, Theorems \ref{OrderedNPatleast3} and \ref{OrderedNPdemo}, and the \textsf{NP}-completeness assertion in Theorem \ref{shareanend}.

\section{Preliminaries}\label{sec:preliminary}
    	Here we introduce some notation and terminology. Let $G$ be a (ordered) graph. By a \textit{clique} in $G$ we mean a set of pairwise adjacent vertices, and a \textit{stable set} in $G$ is  a set of pairwise nonadjacent vertices. We say two disjoint sets $U,W\subseteq V(G)$ are \emph{anticomplete} if no vertex in $U$ has a neighbor in $W$. For $x,y\in \mathbb{R}\cup \{-\infty,\infty\}$ with $x<y$, we define $G(x:y]=\{v\in V:x< \varphi_G(v)\leq  y\}$, and $G[x:y), G[x:y]$ and $G(x:y)$ are defined similarly.  For every $\rho\in \mathbb{N}$ and every vertex $v\in V(G)$, we denote by $N^{\rho}_G(v)$ the set of all vertices in $G$ at distance $\rho$ from $v$, and by $N^{\rho}_G[v]$ the set of all vertices in $G$ at distance at most $\rho$ from $v$. In particular, we write $N_G(v)$ for $N_G^1(v)$, which is the set of neighbors of $v$ in $G$, and $N_G[v]$ for $N^1_G[v]=N_G(v)\cup \{v\}$. Also, the set of \emph{forward neighbors} of $v$ is defined as $N^+_G(v)=\{u\in N_G(v): \varphi(v)<\varphi(u)\}$, and the set of \emph{backward neighbors} of $v$ is $N^-_G(v)=\{u\in N_G(v): \varphi(v)>\varphi(u)\}$. Moreover, for every $X\subseteq V(G)$, we define $N^{\rho}_G[X]=\bigcup_{x\in X}N^{\rho}_G[x]$ and $N^{\rho}_G(X)=N^{\rho}_G[X]\setminus X$. Again, we write $N_G(X)$ for $N_G^1(X)$ and $N_G[X]$ for $N^1_G[X]$. Throughout, we sometimes omit the subscript $G$ from the latter notations if there is no ambiguity.
    	
    	Let $(G,L)$ be an instance of the \textsc{Ordered Graph List-3-Coloring Problem}. An instance $(G',L')$ is a $(G,L)$-\emph{refinement} if $G'$ is an induced subgraph of $G$ and for all $v\in V(G')$, $L'(v)\subseteq L(v)$. A $(G,L)$-refinement $(G',L')$ is \emph{spanning} if $G'=G$. A $(G,L)$-\emph{profile} $\mathcal{L}$ is a set of $(G,L)$-refinements. A $(G,L)$-profile is \emph{spanning} if all its elements are spanning. Two list assignments $L$ and $L'$ are \emph{equivalent} for $G$ if for every coloring $c$ of $G$, $c$ is an $L$-coloring if and only if it is an $L'$-coloring.
	
	\section{Excluding an ordered graph with at most one edge}\label{sec:oneedge}
  The goal of this section is to prove Theorem \ref{mainwhale}. For every $w\in \mathbb{N}$, let $J_w$ denote the ordered graph with $V(J_w)=\{v_1,\ldots, v_{3w+2}\}$, $E(J_w)=\{v_{w+1}v_{2w+2}\}$ and $\varphi_{J_w}(v_i)=i$ for all $i\in [3w+2]$. Note that every ordered graph with at most one edge is isomorphic to an induced subgraph of $J_w$ for some $w\in \mathbb{N}$. Therefore, in order to prove Theorem \ref{mainwhale}, it suffices to prove the following.
    
    \begin{theorem}\label{mainwhale2}
    For every fixed $w\in \mathbb{N}$, the \textsc{Ordered Graph List-3-Coloring Problem} restricted to $J_w$-free ordered graphs can be solved in polynomial time.
    \end{theorem}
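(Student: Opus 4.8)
The plan is to transform a given $J_w$-free instance $(G,L)$, in polynomial time, into polynomially many instances in which every list has size at most $2$, and then to finish each of those by the classical reduction of list-coloring with lists of size at most $2$ to \textsc{2-Satisfiability}. First I would run the standard cleaning step: as long as some vertex $v$ has $|L(v)|\le 1$, either return that $G$ is not $L$-colorable (if $L(v)=\emptyset$), or else assign $v$ the unique color in $L(v)$, delete this color from the lists of the neighbors of $v$, and delete $v$ from $G$; this produces an equivalent instance. Afterwards every list has size $2$ or $3$; put $T=\{v\in V(G):|L(v)|=3\}$. If $T=\emptyset$ we are done, so the whole difficulty is to bring all lists down to size $2$, i.e., to handle $T$. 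Note that every induced subgraph of $G$, and hence every refinement we produce along the way, is again $J_w$-free.

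The core is a structural description of $J_w$-free ordered graphs. Unwinding the definition, $G$ is $J_w$-free if and only if for every edge $xy$ with $\varphi_G(x)<\varphi_G(y)$, the graph $G\setminus N_G[\{x,y\}]$ contains no stable set $S$ with at least $w$ vertices in each of $G(-\infty:\varphi_G(x))$, $G(\varphi_G(x):\varphi_G(y))$ and $G(\varphi_G(y):\infty)$. Call $xy$ \emph{central} if each of these three order-regions already contains at least $w$ non-neighbors of $\{x,y\}$ (non-central edges are the benign case). For a central edge, $J_w$-freeness forces a dichotomy: either one of the three regions induces, on the non-neighbors of $\{x,y\}$, a subgraph with independence number less than $w$ --- and in a $3$-colorable graph such a set has fewer than $3w$ vertices, so we may treat that region as short (if it ever turns out not to be, $G$ is not $3$-colorable and we report failure) --- or else all three regions carry stable sets of size $w$, in which case the only way the required \emph{joint} stable set can still fail to exist is that two of the regions interact almost like a complete bipartite graph; a Zarankiewicz-type estimate then extracts a large complete bipartite pair between them. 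Iterating this over all central edges, I would produce a set $F\subseteq V(G)$ whose size is bounded by a function of $w$ only, together with a number (also bounded in terms of $w$) of ``dense bipartite'' pairs of vertex subsets, such that outside of $F$ and these pairs the edges of $G$ have bounded span in the ordering --- in particular that part of $G$ admits a path decomposition of bounded width.

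Given such a decomposition, the algorithm is a bounded-depth branching. Guess an $L$-coloring of $G[F]$ (at most $3^{|F|}$ choices); inside each dense bipartite pair, guess which colors are used on each of its two sides (at most $2^{3}$ choices per side, and note that a side completely joined to a nonempty side can use at most two colors), and propagate each guess through the cleaning step. A short argument then shows that in every branch all lists have shrunk to size at most $2$, and we finish as \textsc{2-Satisfiability}; alternatively the bounded-width leftover can be finished directly by the standard coloring dynamic program on its path decomposition. Since all the branching consists of a bounded number of guesses with a bounded number of options each, the total number of instances produced is polynomial, and each is solved in polynomial time.

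The main obstacle is the structural lemma, and in particular the coupling built into the definition of $J_w$-freeness. The naive hope that a $J_w$-free ordered graph becomes bounded-pathwidth after deleting boundedly many vertices is simply false: a complete bipartite graph $K_{n,n}$ ordered with one side entirely before the other, and likewise suitably ordered half-graphs, are $J_w$-free for every $w$ (for each of their edges one of the order-regions lies inside the neighborhood of the edge) yet have unbounded pathwidth even after deleting any bounded set of vertices. So the lemma must genuinely separate the ``short region'' behavior from the ``dense bipartite interaction'' behavior; the delicate points are that the failure of a \emph{jointly} stable triple --- rather than of a stable set in a single region --- still suffices to pin down one of these two behaviors, and that one must keep $|F|$, the number of dense pairs, and all span bounds controlled by $w$ alone, so that the resulting branching stays polynomial.
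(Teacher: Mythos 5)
Your per-edge analysis of what $J_w$-freeness means is accurate, but the proof collapses at the globalization step, and that step is exactly what your algorithm needs. The claimed structural lemma --- a set $F$ of size bounded in $w$ plus a number of ``dense bipartite'' pairs also bounded in $w$, outside of which all edges have bounded span --- is false. Take $m$ consecutive blocks $A^1,B^1,A^2,B^2,\ldots,A^m,B^m$ in the ordering, where each $A^k$ and $B^k$ has $n$ vertices, each pair $(A^k,B^k)$ induces a complete bipartite graph, and there are no other edges. For any edge $a b$ with $a\in A^k$, $b\in B^k$, every vertex lying strictly between $a$ and $b$ in the ordering belongs to $A^k\cup B^k$ and is adjacent to $a$ or to $b$, so the middle region contains no non-neighbor of the edge; hence this graph is $J_1$-free, and therefore $J_w$-free for every $w$. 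Yet it contains $m$ vertex-disjoint complete bipartite blocks, each with edges of span about $2n$ and unbounded pathwidth, with $m$ and $n$ both unbounded. No bounded set $F$ and no collection of boundedly many dense pairs can absorb all of these long-span edges (a single pair covering several blocks is not dense at all, and your treatment of a pair relies on the complete join); and if you instead allow one pair per block, your branching over color sets on the sides of the pairs produces on the order of $8^{2m}$ instances, which is exponential. So the decomposition your bounded-depth branching rests on does not exist in the form you need it.

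There are two further, smaller gaps in the same direction. First, even where a genuinely dense pair exists, bipartite-Ramsey/Zarankiewicz arguments only extract a complete bipartite $K_{t,t}$ \emph{inside} it; guessing the color sets used on the two sides shrinks lists only for vertices completely joined to the opposite side, so the remaining vertices of the pair keep lists of size $3$ and their long-span edges are not disposed of. Second, the per-edge dichotomy (``no jointly stable triple among the three regions forces either a small region or a near-complete bipartite interaction'') is plausible but requires an iterated multipartite Ramsey argument that you do not supply. For comparison, the paper avoids any global structure theorem: it runs a dynamic program over the sequence of \emph{maximal} edges, where Erd\H{o}s--Szekeres shows that for any coloring each color class admits a dominating ``interface'' set of size $O(w^2)$ under each maximal edge; transitions are checked by guessing the first and last $w$ vertices of each color class, after which (using Ramsey and the absence of $K_4$) only constantly many vertices retain lists of size $3$ and the rest is finished by the $2$-SAT reduction. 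If you want to salvage your route, you would need a genuinely global statement about $J_w$-free orderings that tolerates unboundedly many complete-bipartite-like and half-graph-like interactions, which is precisely the difficulty the paper's local, edge-by-edge dynamic programming is designed to sidestep.
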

    
   \sloppy The proof of Theorem  \ref{mainwhale2} is involved, but roughly speaking, the algorithm we give is based on dynamic programming on a decomposition of the input ordered graph using its ``maximal'' edges. A concrete presentation of the algorithm, though, consists of several steps. To begin with, let 
   $w\in \mathbb{N}$, $(G,L)$ be an instance of the \textsc{Ordered Graph List-3-Coloring Problem}, and $e=uv\in E(G)$. We set the notations $\und_G(e)=G[\min\{\varphi_G(u),\varphi_G(v)\}: \max\{\varphi_G(u),\varphi_G(v)\}]$ and $\lft_G(e)=G(-\infty:\min\{\varphi_G(u),\varphi_G(v)\})$. Also, we define $\Gamma_w(G,L,e)$ to be the set of all pairs $(S,\sigma)$ where $S$ is a subset of $\und_G(e)$ containing both $u$ and $v$, $\sigma:S\rightarrow [3]$ is an $L|_S$-coloring of $G[S]$, and for each $i\in [3]$, we have $|\sigma^{-1}(i)|\leq 27w^2+3$. From this definition, one may easily deduce the following.
   
   \begin{lemma}\label{computeguess}
   Given $w\in \mathbb{N}$, an instance $(G,L)$ of the \textsc{Ordered Graph List-3-Coloring Problem} and $e\in E(G)$, we have $|\Gamma_w(G,L,e)|\leq \mathcal{O}(|V(G)|^{81w^2+7})$ and one can compute $\Gamma_w(G,L,e)$ in time $\mathcal{O}(|V(G)|^{81w^2+7})$.
   \end{lemma}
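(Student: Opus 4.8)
The plan is to observe that every pair $(S,\sigma)\in\Gamma_w(G,L,e)$ has $|S|$ bounded by a constant depending only on $w$, so there are few admissible sets $S$ and, given $S$, only constantly many admissible $\sigma$; a brute-force enumeration with a constant-time check per candidate pair then gives both the bound and the algorithm.

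First I would note that the requirement $|\sigma^{-1}(i)|\leq 27w^2+3$ for each $i\in[3]$ forces $|S|=\sum_{i\in[3]}|\sigma^{-1}(i)|\leq 3(27w^2+3)=81w^2+9$. Since every admissible $S$ is a subset of $\und_G(e)$ that contains the two fixed endpoints $u,v$ of $e$, the map $S\mapsto S\setminus\{u,v\}$ identifies the admissible sets $S$ with certain subsets of $V(G)\setminus\{u,v\}$ of size at most $81w^2+7$. Hence the number of admissible $S$ is at most $\sum_{j=0}^{81w^2+7}\binom{|V(G)|-2}{j}=\mathcal{O}(|V(G)|^{81w^2+7})$, where the constant hidden in $\mathcal{O}(\cdot)$ depends only on $w$. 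For each such $S$ there are at most $3^{|S|}\leq 3^{81w^2+9}$ functions $\sigma:S\rightarrow[3]$, again a constant depending only on $w$. Multiplying the two bounds yields $|\Gamma_w(G,L,e)|\leq\mathcal{O}(|V(G)|^{81w^2+7})$.

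For the running time, I would first compute $V(\und_G(e))$ in time $\mathcal{O}(|V(G)|)$, then, for each $j\in\{0,\ldots,81w^2+7\}$, enumerate all $j$-element subsets $T$ of $V(\und_G(e))\setminus\{u,v\}$, put $S=T\cup\{u,v\}$, and for each of the at most $3^{|S|}$ maps $\sigma:S\rightarrow[3]$ test in $\mathcal{O}(|S|^2)=\mathcal{O}(1)$ time whether $\sigma$ is an $L|_S$-coloring of $G[S]$ and whether $|\sigma^{-1}(i)|\leq 27w^2+3$ for all $i\in[3]$, outputting $(S,\sigma)$ exactly when all these hold. Standard subset enumeration produces the $j$-element subsets with $\mathcal{O}(1)$ amortized overhead per subset for each fixed $j$, so the total time is $\mathcal{O}(|V(G)|)+\mathcal{O}(|V(G)|^{81w^2+7})\cdot\mathcal{O}(1)=\mathcal{O}(|V(G)|^{81w^2+7})$.

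I do not expect a genuine obstacle here; the only points needing a little care are the exponent bookkeeping — the improvement from $81w^2+9$ to $81w^2+7$ comes precisely from $u$ and $v$ being fixed vertices that every admissible $S$ must contain — and verifying that all multiplicative factors depending on $w$ alone (the $3^{81w^2+9}$ and the constants converting $\sum_j\binom{|V(G)|-2}{j}$ into $|V(G)|^{81w^2+7}$) are legitimately absorbed into $\mathcal{O}(\cdot)$ because $w$ is fixed.
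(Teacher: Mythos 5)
Your proposal is correct: the paper treats this lemma as immediate from the definition and gives no proof, and your argument is exactly the intended routine one — $|S|\leq 3(27w^2+3)=81w^2+9$ with $u,v\in S$ forced, so at most $\mathcal{O}(|V(G)|^{81w^2+7})$ choices of $S$, a constant number ($\leq 3^{81w^2+9}$) of maps $\sigma$ per $S$, and a constant-time feasibility check for each candidate. The exponent bookkeeping (gaining $2$ in the exponent from the fixed endpoints $u,v$) is handled correctly, so nothing is missing.
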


  For an instance $(G,L)$ of the \textsc{Ordered Graph List-3-Coloring Problem} and $X,Y\subseteq V(G)$, an $L|_X$-coloring $\phi_1$ of $G[X]$ and an $L|_Y$-coloring $\phi_2$ of $G[Y]$ are said to be \textit{compatible} if $\phi_1|_{X\cap Y}=\phi_2|_{X\cap Y}$. Let $e=uv\in E(G)$ and $g=(S,\sigma)\in \Gamma_w(G,L,e)$. Let $X\subseteq V(G)$ and $\phi:X\rightarrow [3]$ be an $L|_X$-coloring of $G[X]$. We say $(\phi,g)$ \textit{has property} \textsf{X} if
     \vsp
     \begin{enumerate}[(X1)]
		\item\otherlabel{p1}{X1} $\phi$ and $\sigma$ are compatible, and;
        \item\otherlabel{p2}{X2} $\sigma\cup \phi$ is an $L|_{S\cup X}$-coloring of $G[S\cup X]$.
    \end{enumerate}
    \vsp
    
    Also, we say $(\phi,g)$ \textit{has property} \textsf{Y} if 
    \vsp
       \begin{enumerate}[(Y1)]
		\item\otherlabel{q1}{Y1} $\phi$ and $\sigma$ are compatible, and;
        	\item\otherlabel{q2}{Y2} for each $i\in [3]$, every $x\in \lft_G(e)$ with a neighbor in $\phi^{-1}(i)\cap \und_G(e)$ has a neighbor in $\sigma^{-1}(i)$.\vsp
    \end{enumerate}

The proof of the following lemma is straightforward and we omit it.

\begin{lemma}\label{PQprop}
Let $(G,L)$ be an instance of the \textsc{Ordered Graph List-3-Coloring Problem}, $e=uv\in E(G)$ and $g=(S,\sigma)\in \Gamma_w(G,L,e)$. Let $X\subseteq V(G)$ and $\phi:X\rightarrow [3]$ be an $L|_X$-coloring of $G[X]$. Then for every $X'\subseteq X$, if $(\phi,g)$ has property \textsf{X} (resp. \textsf{Y}), then $(\phi|_{X'},g)$ has property \textsf{X} (resp. \textsf{Y}), as well.
\end{lemma}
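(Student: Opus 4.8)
The plan is to prove the lemma by directly unwinding the definitions of properties \textsf{X} and \textsf{Y}, exploiting the single structural fact that $X'\subseteq X$ forces $X'\cap S\subseteq X\cap S$ and $S\cup X'\subseteq S\cup X$, so that replacing $\phi$ by $\phi|_{X'}$ can only shrink each of the sets that appear in the definitions. In particular, I would first record that the union maps $\sigma\cup\phi$ and $\sigma\cup\phi|_{X'}$ are well defined as functions precisely because of compatibility, so the hypotheses make sense.

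For property \textsf{X}: assuming $(\phi,g)$ satisfies (X1) and (X2), condition (X1) for $\phi|_{X'}$ follows because $(\phi|_{X'})|_{X'\cap S}$ is the restriction of $\phi|_{X\cap S}$ to the smaller set $X'\cap S$, and $\phi|_{X\cap S}=\sigma|_{X\cap S}$ by (X1), so the two maps agree on $X'\cap S$. For (X2), observe that $G[S\cup X']$ is an induced subgraph of $G[S\cup X]$ since $S\cup X'\subseteq S\cup X$, and, using the compatibility just established, $\sigma\cup\phi|_{X'}$ is exactly the restriction of $\sigma\cup\phi$ to $S\cup X'$; as the restriction of a proper list-coloring to an induced subgraph is again a proper list-coloring with the restricted lists, (X2) for $(\phi,g)$ yields (X2) for $(\phi|_{X'},g)$.

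For property \textsf{Y}: assuming $(\phi,g)$ satisfies (Y1) and (Y2), condition (Y1) for $\phi|_{X'}$ is identical to the (X1) argument above. For (Y2), the key observation is $(\phi|_{X'})^{-1}(i)=\phi^{-1}(i)\cap X'\subseteq\phi^{-1}(i)$ for each $i\in[3]$; hence any $x\in\lft_G(e)$ with a neighbor in $(\phi|_{X'})^{-1}(i)\cap\und_G(e)$ also has a neighbor in $\phi^{-1}(i)\cap\und_G(e)$, so by (Y2) for $(\phi,g)$ it has a neighbor in $\sigma^{-1}(i)$, which is precisely (Y2) for $(\phi|_{X'},g)$.

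There is no genuine obstacle here: the statement is a pure monotonicity-under-restriction property, and the only points requiring a moment's care are checking well-definedness of the union maps (guaranteed by compatibility) and that list-colorability is inherited by induced subgraphs. This is exactly why the authors declare the proof straightforward and omit it; I would include only the short verification above for completeness.
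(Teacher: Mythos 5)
Your verification is correct and is exactly the routine definition-unwinding argument the paper has in mind when it declares the proof straightforward and omits it; the key points (compatibility restricts, $\sigma\cup\phi|_{X'}$ is the restriction of $\sigma\cup\phi$ to the induced subgraph $G[S\cup X']$, and $(\phi|_{X'})^{-1}(i)\subseteq\phi^{-1}(i)$ for property \textsf{Y}) are all handled properly. Nothing further is needed.
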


To prove our next lemma, we need the following classical result of Erd\H{o}s and Szekeres \cite{ErdSze}.

\begin{theorem}[Erd\H{o}s and Szekeres \cite{ErdSze}]\label{ErdosSzekeres}
    For all $n\geq 0$ and every sequence $a_1,\ldots, a_{n^2+1}$ of reals, there exists an increasing injection $\alpha:[n+1]\rightarrow [n^2+1]$ such that either $a_{\alpha(1)}<\cdots<a_{\alpha(n+1)}$ or $a_{\alpha(1)}>\cdots>a_{\alpha(n+1)}$.
\end{theorem}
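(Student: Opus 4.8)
The plan is to use the classical pigeonhole argument. Fix $n\geq 0$ and a sequence $a_1,\ldots,a_{n^2+1}$ of reals. For each $i\in[n^2+1]$, I would introduce two quantities: let $p_i$ be the largest length of a strictly increasing subsequence of $a_1,\ldots,a_{n^2+1}$ whose last term is $a_i$, and let $q_i$ be the largest length of a strictly decreasing subsequence whose last term is $a_i$. Both are well-defined positive integers, since the one-term subsequence $(a_i)$ is simultaneously increasing and decreasing.

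First I would dispose of the favourable cases. If $p_i\geq n+1$ for some $i$, then choosing a strictly increasing subsequence $a_{i_1}<\cdots<a_{i_{n+1}}$ witnessing this (so $i_1<\cdots<i_{n+1}$, since it is a subsequence) and setting $\alpha(t)=i_t$ produces the required increasing injection $\alpha\colon[n+1]\to[n^2+1]$ with $a_{\alpha(1)}<\cdots<a_{\alpha(n+1)}$; symmetrically, if $q_i\geq n+1$ for some $i$ we obtain the decreasing alternative. (Note this already settles $n=0$, where $p_1=1=n+1$.) So I may assume $p_i,q_i\in[n]$ for every $i$, and hence the map $i\mapsto(p_i,q_i)$ sends the $(n^2+1)$-element set $[n^2+1]$ into the $n^2$-element set $[n]\times[n]$. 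By the pigeonhole principle there exist $i<j$ with $(p_i,q_i)=(p_j,q_j)$. Since the terms are pairwise distinct (as they are in every application of this theorem, being the images of distinct vertices under an injective ordering $\varphi$), either $a_i<a_j$ or $a_i>a_j$. In the first case, appending $a_j$ to a strictly increasing subsequence of length $p_i$ ending at $a_i$ yields a strictly increasing subsequence of length $p_i+1$ ending at $a_j$, so $p_j\geq p_i+1>p_i$, a contradiction; in the second case the analogous argument with decreasing subsequences gives $q_j>q_i$, again a contradiction. This proves the theorem.

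There is essentially no obstacle here — the argument is routine — so I expect the only point requiring a moment's care to be the (tacit) hypothesis that $a_1,\ldots,a_{n^2+1}$ are pairwise distinct, which is exactly what guarantees that one of $a_i<a_j$, $a_i>a_j$ holds at the pigeonhole step; without distinctness one would only obtain non-strictly monotone subsequences. Since in this paper the theorem is invoked only for (sub)sequences of pairwise distinct reals, this causes no difficulty.
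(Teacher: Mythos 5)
Your proof is correct: it is the standard pigeonhole (Seidenberg-style) argument for the Erd\H{o}s--Szekeres theorem, and all steps check out, including the treatment of the case $n=0$ and the contradiction $p_j>p_i$ (resp.\ $q_j>q_i$) at the pigeonhole step. There is nothing in the paper to compare it against, since Theorem \ref{ErdosSzekeres} is only cited from \cite{ErdSze} and not proved there; your argument is the usual textbook proof one would supply. Your side remark about distinctness is well taken and is in fact a point about the statement rather than about your proof: with strict inequalities, the theorem as phrased fails for sequences with repeated terms (e.g.\ a constant sequence), so either one assumes the $a_i$ pairwise distinct or one weakens the conclusion to non-strict monotonicity. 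In the only place the paper invokes the theorem, the sequence consists of values of the injective ordering $\varphi_G$ at distinct vertices, so the terms are pairwise distinct and your proof applies verbatim.
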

    
    \begin{lemma}\label{ABexists}
    Let $w\in \mathbb{N}$ and let $(G,L)$ be an instance of the \textsc{Ordered Graph List-3-Coloring Problem} where $G$ is $J_w$-free. Let $\phi$ be an $L$-coloring of $G$. Then for every edge $e=uv\in E(G)$, there exists $g_e\in \Gamma_w(G,L,e)$ such that $(\phi,g_e)$ has both  properties \textsf{X} and \textsf{Y}.
    \end{lemma}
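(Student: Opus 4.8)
The plan is to take $g_e=(S,\sigma)$ with $\sigma$ a restriction of $\phi$, so that properties \ref{p1}, \ref{p2} and \ref{q1} become automatic, and to choose $S$ so that property \ref{q2} holds while $S$ remains small enough for $(S,\sigma)\in\Gamma_w(G,L,e)$. Fix $e=uv\in E(G)$. For each $i\in[3]$, let $A_i=\phi^{-1}(i)\cap\und_G(e)$ and let $B_i$ be the set of vertices $x\in\lft_G(e)$ having a neighbor in $A_i$. For each $i$, fix a set $S_i\subseteq A_i$ of minimum size subject to the requirement that every $x\in B_i$ has a neighbor in $S_i$; such an $S_i$ exists because $A_i$ itself meets the requirement. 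Set $S=\{u,v\}\cup S_1\cup S_2\cup S_3\subseteq\und_G(e)$ and $\sigma=\phi|_S$. Since each $S_i$ consists of color-$i$ vertices, $\sigma^{-1}(i)=S_i\cup(\{u,v\}\cap\phi^{-1}(i))$, so $(S,\sigma)\in\Gamma_w(G,L,e)$ as soon as $|S_i|\le 27w^2$ for every $i$. Granting that, properties \ref{p1}, \ref{p2} and \ref{q1} hold because $\sigma=\phi|_S$ and $\phi$ is an $L$-coloring of $G$, while \ref{q2} holds because every $x\in\lft_G(e)$ with a neighbor in $A_i$ belongs to $B_i$ and hence has a neighbor in $S_i\subseteq\sigma^{-1}(i)$. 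Thus everything reduces to the bound $|S_i|\le 27w^2$.

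To prove this bound, suppose for contradiction that $|S_i|\ge 27w^2+1$, and write $S_i=\{y_1,\dots,y_{|S_i|}\}$ with $\varphi_G(y_1)<\cdots<\varphi_G(y_{|S_i|})$. By minimality of $S_i$, for each $j$ there is a vertex $x_j\in B_i$ whose only neighbor in $S_i$ is $y_j$, and these $x_j$ are pairwise distinct. As $\phi$ is proper, one color class of $\phi$ contains at least $\lceil |S_i|/3\rceil\ge 9w^2+1=(3w)^2+1$ of the $x_j$; choose $j_1<\cdots<j_N$ with $N\ge(3w)^2+1$ and $\phi(x_{j_1})=\cdots=\phi(x_{j_N})$. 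Applying Theorem \ref{ErdosSzekeres} with $n=3w$ to $\varphi_G(x_{j_1}),\dots,\varphi_G(x_{j_{(3w)^2+1}})$ gives $t_1<\cdots<t_{3w+1}$ for which $\varphi_G(x_{j_{t_1}}),\dots,\varphi_G(x_{j_{t_{3w+1}}})$ is strictly monotone. Put $a_r=x_{j_{t_r}}$ and $b_r=y_{j_{t_r}}$ for $r\in[3w+1]$. Then $\{a_1,\dots,a_{3w+1}\}$ is stable (it lies in one color class of $\phi$), $\{b_1,\dots,b_{3w+1}\}\subseteq A_i$ is stable, $a_rb_r\in E(G)$ and $a_rb_s\notin E(G)$ for $r\ne s$, $\varphi_G(b_1)<\cdots<\varphi_G(b_{3w+1})$, every $a_r$ precedes every $b_s$ in the ordering (as $a_r\in\lft_G(e)$ and $b_s\in\und_G(e)$), and $\varphi_G(a_1),\dots,\varphi_G(a_{3w+1})$ is monotone.

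Now I locate an induced $J_w$ in this configuration, contradicting $J_w$-freeness of $G$. If $\varphi_G(a_1)<\cdots<\varphi_G(a_{3w+1})$, take
\[Z=\{a_1,\dots,a_w\}\cup\{a_{2w+1}\}\cup\{b_{w+1},\dots,b_{2w}\}\cup\{b_{2w+1}\}\cup\{b_{2w+2},\dots,b_{3w+1}\}.\]
These are $3w+2$ distinct vertices, and listing them in increasing order of $\varphi_G$ puts $a_{2w+1}$ in position $w+1$ and $b_{2w+1}$ in position $2w+2$; moreover the only edge of $G[Z]$ is $a_{2w+1}b_{2w+1}$, since the $a$'s are pairwise nonadjacent, the $b$'s are pairwise nonadjacent, and $(a_{2w+1},b_{2w+1})$ is the only pair $(a_r,b_r)$ with both entries in $Z$. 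Hence $G[Z]\cong J_w$. If instead $\varphi_G(a_1)>\cdots>\varphi_G(a_{3w+1})$, the same reasoning applies to
\[Z=\{a_{w+2},\dots,a_{2w+1}\}\cup\{a_{w+1}\}\cup\{a_1,\dots,a_w\}\cup\{b_{w+1}\}\cup\{b_{2w+2},\dots,b_{3w+1}\},\]
with $a_{w+1}$ and $b_{w+1}$ now in positions $w+1$ and $2w+2$ and $a_{w+1}b_{w+1}$ the unique edge, so again $G[Z]\cong J_w$. Either way we reach a contradiction, so $|S_i|\le 27w^2$ and the proof is complete.

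The main obstacle is this last step. The private neighbors come with the perfect matching $\{a_rb_r\}$, so any selection using both $a_r$ and $b_r$ for some $r$ picks up an unwanted edge, and the backward vertices $x_j$ are a priori not pairwise nonadjacent; the fixes are to pass to a single color class of $\phi$ before invoking Erd\H{o}s--Szekeres and to draw the $w$ ``left'', $w$ ``middle'' and $w$ ``right'' vertices of the target $J_w$ from pairwise distinct indices. This is exactly what makes the threshold $27w^2=3\cdot(3w)^2$ appear.
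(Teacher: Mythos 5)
Your proof is correct and follows essentially the same route as the paper: take $\sigma=\phi|_S$ for a minimal/minimum dominating set $S_i$ with private neighbors in $\lft_G(e)$, pigeonhole the private neighbors into one color class, apply Erd\H{o}s--Szekeres to get a monotone subsequence of length $3w+1$, and extract an induced $J_w$ by a case split on the direction of monotonicity, which bounds $|S_i|$ by roughly $27w^2$ exactly as in the paper's proof. Your index selections for the two cases differ slightly from the paper's sets $W$ but are equally valid (and your matching of each construction to the corresponding monotonicity case is, if anything, stated more carefully).
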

    \begin{proof}
    Let $i\in [3]$ be arbitrary. Let $U_i$ be the set of all vertices in $\lft_G(e)$ with at least one neighbor in $\phi^{-1}(i)\cap \und_G(e)$, and $S_i$ be a minimal subset of $\phi^{-1}(i)\cap \und_G(e)$ such that every vertex in $U_i$ has a neighbor in $S_i$. Let $|S_i|=k_i$ and $S_i=\{s_1^i,\ldots, s_{k_i}^i\}$ where $\varphi_G(s_1^i)<\cdots<\varphi_G(s_{k_i}^i)$. It follows from the minimality of $S_i$ that there are $k_i$ distinct vertices $t^i_1,\ldots, t^i_{k_i}\in U_i$ such that for each $j\in [k_i]$, we have $N_G(t^i_j)\cap S_i=\{s^i_j\}$. We further deduce:
    
    \sta{\label{Erdos-Szekeresing} For each $i\in [3]$, we have $k_i\leq 27w^2+2$.}
    
    Suppose for a contradiction that $k_i\geq 3(9w^2+1)$ for some $i\in [3]$. Since $\phi$ is a $3$-coloring of $G$, there exists an increasing injection from $\alpha:[9w^2+1]\rightarrow [k_i]$ such that $\{t^i_{\alpha(j)}:j\in [9w^2+1]\}$ is a stable set in $G$. In addition, $\{s^i_{\alpha(j)}:j\in [9w^2+1]\}\subseteq S_i\subseteq \phi^{-1}(i)$ is also a stable set of $G$. As a result, we have $E(G[\{s^i_{\alpha(j)},t^i_{\alpha(j)}:j\in [9w^2+1]\}])=\{t^i_{\alpha(j)}s^i_{\alpha(j)}:j\in [9w^2+1]\}$. By Theorem \ref{ErdosSzekeres} applied to the sequence $\varphi_G(t^i_{\alpha(1)}), \ldots, \varphi_G(t^i_{\alpha(9w^2+1)})$ of integers, there exists an increasing injection $\beta:[3w+1]\rightarrow \alpha([9w^2+1])$ such that either $\varphi_G(t^i_{\beta(1)})<\cdots<\varphi_G(t^i_{\beta(3w+1)})$ or $\varphi_G(t^i_{\beta(1)})>\cdots>\varphi_G(t^i_{\beta(3w+1)})$. Recall also that, since both $\alpha$ and $\beta$ are increasing, it follows that $\varphi_G(s^i_{\beta(1)})<\cdots<\varphi_G(s^i_{\beta(3w+1)})$. Now, in the former case, let
    
    $$W=\{s^i_{\beta(1)},\ldots, s^i_{\beta(2w+1)}\}\cup \{t^i_{\beta(w+1)}\}\cup \{t^i_{\beta(2w+2)},\ldots, t^i_{\beta(3w+1)}\}.$$
    
    Then one may readisly observe that $E(G[W])=\{s^i_{\beta(w+1)}t^i_{\beta(w+1)}\}$, and so $G[W]$ is isomorphic to $J_w$. Also, in the latter case, let $$W=\{s^i_{\beta(1)},\ldots, s^i_{\beta(w)}\}\cup \{s^i_{\beta(2w+1)},\ldots, s^i_{\beta(3w+1)}\}\cup \{t^i_{\beta(w+1)},\ldots, t^i_{\beta(2w+1)}\}.$$
    Then it straightforward to check that $E(G[W])=\{s^i_{\beta(2w+1)}t^i_{\beta(2w+1)}\}$, and so $G[W]$ is isomorphic to $J_w$. The last two conclusions violate the assumption that $G$ is $J_w$-free, hence proving \eqref{Erdos-Szekeresing}.\vsp
    
    Now, let $S=S_1\cup S_2\cup S_3\cup \{u,v\}$, $\sigma=\phi|_S$, and $g_e=(S,\sigma)$. Note that $S$ is a subset of $\und_G(e)$ with $u,v\in S$, $\sigma:S\rightarrow [3]$ is an $L|_S$-coloring of $G[S]$, and by \eqref{Erdos-Szekeresing}, for each $i\in [3]$, we have $|\sigma^{-1}(i)|\leq |S_i|+1=k_i+1\leq 27w^2+3$. Therefore, we have $g_e\in \Gamma_w(G,L,e)$. Also, it follows directly from the definition of $g_e$ that $(\phi,g_e)$ has property \textsf{X}. In particular, $\phi$ and $\sigma$ are compatible. Moreover, for each $i\in [3]$, by the definition of $U_i$ and $S_i$,  every vertex in $\lft_G(e)$ with a neighbor in $\phi^{-1}(i)\cap \und_G(e)$ has a neighbor in $S_i\subseteq \sigma^{-1}(i)$. Hence, $(\phi,\sigma)$ has property \textsf{Y}. This completes the proof of Lemma \ref{ABexists}.
    \end{proof}

We continue with a few more definitions.  For an instance $(G,L)$ of the \textsc{Ordered Graph List-3-Coloring Problem}, an edge $e=uv\in E(G)$ is said to be \textit{maximal} if there is no edge $u'v'\in E(G)$ with $\varphi_G(u')\leq \varphi_G(u)$ and $\varphi_G(v)\leq \varphi_G(v')$. We denote the set of all maximal edges of $G$ by $\mx(G)$. Let $u_1v_1,u_2v_2\in \mx(G)$ be two distinct maximal edges of $G$ with $\varphi_G(u_1)< \varphi_G(v_1)$ and $\varphi_G(u_2)< \varphi_G(v_2)$. Then we have $u_1\neq u_2$ and $v_1\neq v_2$, and if $\varphi_G(u_1)< \varphi_G(u_2)$, then $\varphi_G(v_1)< \varphi_G(v_2)$. As a result, $\varphi_G$ induces a natural ordering on the elements of $\mx(G)$. In accordance, for distinct $u_1v_1,u_2v_2\in \mx(G)$, we say $u_1v_1$ \textit{is before} (resp. \textit{after}) $u_2v_2$ if $\varphi_G(u_1)<\varphi_G(u_2)$ (resp. $\varphi_G(u_2)<\varphi_G(u_1)$), and we say $u_1v_1$ \textit{is immediately before} (resp. \textit{immediately after}) $u_2v_2$ if $u_1v_1$ is before (resp. after) $u_2v_2$ and there is no $u_3v_3\in \mx(G)$ with $\varphi_G(u_1)<\varphi_G(u_3)<\varphi_G(u_2)$ (resp. $\varphi_G(u_2)<\varphi_G(u_3)<\varphi_G(u_1)$). By the \textit{first} maximal edge of $G$, we mean the one that is before every other maximal edge of $G$. Note that for every vertex $x\in V(G)$, either $x$ is isolated or $x\in \und_G(e)$ for some maximal edge $e\in E(G)$.We leave the proof of the following lemma to the reader.

\begin{lemma}\label{computemax}
Given and instance $(G,L)$ of the \textsc{Ordered Graph List-3-Coloring Problem}, we have $|\mx(G)|\leq |V(G)|-1$ and one can compute $\mx(G)$ in time $\mathcal{O}(|V(G)|^{4})$.
\end{lemma}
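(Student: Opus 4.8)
The statement to prove is Lemma~\ref{computemax}, asserting that $|\mx(G)|\leq |V(G)|-1$ and that $\mx(G)$ can be computed in time $\mathcal{O}(|V(G)|^4)$.

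\medskip

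The plan is to exploit the observation, already made in the text immediately before the lemma, that $\varphi_G$ induces a natural linear ordering on $\mx(G)$: if $u_1v_1,u_2v_2\in\mx(G)$ are distinct with $\varphi_G(u_i)<\varphi_G(v_i)$, then $u_1\neq u_2$ and $v_1\neq v_2$, and $\varphi_G(u_1)<\varphi_G(u_2)$ forces $\varphi_G(v_1)<\varphi_G(v_2)$. First I would use this to get the cardinality bound: the map sending a maximal edge $uv$ (with $\varphi_G(u)<\varphi_G(v)$) to its left endpoint $u$ is injective on $\mx(G)$, so $|\mx(G)|\leq |V(G)|$. To sharpen this to $|V(G)|-1$, note that the largest vertex of $G$ in the order $\varphi_G$ can never be the left endpoint of any edge, so it is not in the image of this map; hence $|\mx(G)|\leq |V(G)|-1$. (If $G$ has no edges at all the bound is trivial.)

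\medskip

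For the running time, the brute-force approach suffices: enumerate all edges $e=uv\in E(G)$ — there are $\mathcal{O}(|V(G)|^2)$ of them — and for each, test maximality directly against the definition, i.e.\ check whether there exists another edge $u'v'\in E(G)$ with $\varphi_G(u')\leq \varphi_G(u)$ and $\varphi_G(v)\leq \varphi_G(v')$ (ordering the endpoints of each edge so the smaller comes first). Testing one edge requires scanning all $\mathcal{O}(|V(G)|^2)$ edges, for a total of $\mathcal{O}(|V(G)|^4)$. This matches the claimed bound, so no cleverness is needed. One could do better with a sweep, but there is no reason to; the stated bound is all that is used downstream.

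\medskip

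I do not anticipate a genuine obstacle here: the lemma is routine, which is presumably why the authors leave it to the reader. The only points requiring a modicum of care are (i) consistently orienting each edge by $\varphi_G$ so that ``left endpoint'' and ``right endpoint'' are well defined, and (ii) the $-1$ improvement in the cardinality bound, which comes from the trivial remark that the $\varphi_G$-maximum vertex is never a left endpoint. Everything else is a direct unwinding of the definition of maximal edge together with the monotonicity property stated in the paragraph preceding the lemma.
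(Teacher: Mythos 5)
Your proof is correct, and since the paper explicitly leaves this lemma to the reader, your argument (injectivity of the left-endpoint map on $\mx(G)$ plus the observation that the $\varphi_G$-maximum vertex is never a left endpoint, together with brute-force pairwise comparison of edges for the $\mathcal{O}(|V(G)|^4)$ bound) is exactly the routine verification the authors intend. No gaps worth noting beyond the degenerate case $V(G)=\emptyset$, which the paper itself ignores.
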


    Next we introduce the key notion of ``successfulness'', which has a recursive definition. Let $(G,L)$ be an instance of the \textsc{Ordered Graph List-3-Coloring Problem}. For an edge $e\in \mx(G)$ and $g=(S,\sigma)\in \Gamma_w(G,L,e)$, we say $g$ is \textit{successful} if either $e$ is the first maximal edge of $G$, or, having the successful elements of $\Gamma_w(G,L,e')$ defined for the edge $e'\in \mx(G)$ immediately before $e$, there exists a successful pair $g'=(T,\tau)\in \Gamma_w(G,L,e')$, such that
    \vsp
    \begin{enumerate}[(S)]
         \item\otherlabel{t3}{S} $G[\und_G(e')]$ admits an $L|_{\und_G(e')}$-coloring $\psi$ for which both $(\psi,g)$ and $(\psi,g')$ have both properties \textsf{X} and \textsf{Y}.
   \end{enumerate}
   \vsp
The following two lemmas examine how successfulness interacts with properties \textsf{X} and \textsf{Y}.

   \begin{lemma}\label{hasA}
Let $w\in \mathbb{N}$ and let $(G,L)$ be an instance of the \textsc{Ordered Graph List-3-Coloring Problem}, where $G$ is $J_w$-free and $L(v)\neq \emptyset$ for all $v\in V(G)$. Let $e=uv\in \mx(G)$ and $g=(S,\sigma)\in \Gamma_w(G,L,e)$ be successful. Then $G[\lft_G(e)]$ admits an $L|_{\lft_G(e)}$-coloring $\phi$ such that $(\phi,g)$ has property \textsf{X}.
\end{lemma}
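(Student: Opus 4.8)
The plan is to induct on the position of $e$ in the natural ordering of $\mx(G)$ recorded just before the lemma, exploiting the recursive definition of successfulness directly. Write $e=uv$ with $\varphi_G(u)<\varphi_G(v)$, so that $\und_G(e)=G[\varphi_G(u):\varphi_G(v)]$ and $\lft_G(e)=G(-\infty:\varphi_G(u))$; note once and for all that $S\subseteq\und_G(e)$ is disjoint from $\lft_G(e)$, so the compatibility clause \eqref{p1} of property \textsf{X} for any $L|_{\lft_G(e)}$-coloring $\phi$ will be vacuous, and the whole content is \eqref{p2}.

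\emph{Base case:} $e$ is the first maximal edge of $G$. Then every vertex of $\lft_G(e)$ is isolated in $G$: a non-isolated $x\in\lft_G(e)$ lies in $\und_G(e'')$ for some maximal edge $e''$, whose left endpoint has $\varphi_G$-value at most $\varphi_G(x)<\varphi_G(u)$, so $e''$ comes before $e$, a contradiction. Since $L(x)\neq\varnothing$ for all $x$, pick any admissible colour for each vertex of $\lft_G(e)$; as $\lft_G(e)$ is anticomplete to $V(G)$ and $\lft_G(e)\cap S=\varnothing$, the map $\sigma\cup\phi$ is an $L|_{S\cup\lft_G(e)}$-coloring of $G[S\cup\lft_G(e)]$, so $(\phi,g)$ has property \textsf{X}.

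\emph{Inductive step:} let $e'=u'v'$ (with $\varphi_G(u')<\varphi_G(v')$) be the maximal edge immediately before $e$. By successfulness of $g$ and \eqref{t3} there are a successful $g'=(T,\tau)\in\Gamma_w(G,L,e')$ and an $L|_{\und_G(e')}$-coloring $\psi$ of $G[\und_G(e')]$ such that $(\psi,g)$ and $(\psi,g')$ both have properties \textsf{X} and \textsf{Y}; and since $e'$ comes before $e$, the induction hypothesis applied to $(e',g')$ gives an $L|_{\lft_G(e')}$-coloring $\phi'$ of $G[\lft_G(e')]$ with $(\phi',g')$ having property \textsf{X}. The structural core is the claim that $\lft_G(e)\subseteq\lft_G(e')\cup\und_G(e')\cup I$ where $I$ is the set of vertices $x$ with $\varphi_G(v')<\varphi_G(x)<\varphi_G(u)$, and that every vertex of $I$ is isolated: a non-isolated such $x$ would lie in $\und_G(e'')$ for a maximal edge $e''$ distinct from $e$ and $e'$, and comparing $\varphi_G$-values of endpoints, the ``natural ordering'' of $\mx(G)$ together with ``$e'$ immediately before $e$'' rules out both $e''$ before $e'$ and $e''$ after $e'$. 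Since $\lft_G(e')$, $\und_G(e')$, $I$ are pairwise disjoint, I can then define $\phi$ on $\lft_G(e)$ by gluing $\phi'$, $\psi$, and an arbitrary admissible colour on each vertex of $I$; it respects lists by construction.

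What remains is verification, and this splits cleanly. Properness of $\phi$ on $G[\lft_G(e)]$: edges inside $\lft_G(e')$ or inside $\und_G(e')$ are handled by $\phi'$ and $\psi$, $I$ is isolated, and for an edge $xy$ with $x\in\lft_G(e')$, $y\in\und_G(e')$, property \textsf{Y} of $(\psi,g')$ produces a neighbour $z\in\tau^{-1}(\psi(y))\subseteq T$ of $x$, and then property \textsf{X} of $(\phi',g')$ forces $\phi'(x)\neq\tau(z)=\psi(y)$. For \eqref{p2} of $(\phi,g)$ we must forbid monochromatic edges from $S$ to $\lft_G(e)$: for an edge $zx$ with $z\in S\subseteq\und_G(e)$ and $x\in\und_G(e')$ this is exactly property \textsf{X} of $(\psi,g)$; the case $x\in I$ is vacuous; and for $x\in\lft_G(e')$ I first use the natural ordering again to show that $z$ is forced into $\und_G(e')$ (its $\varphi_G$-value lies between $\varphi_G(u')$ and $\varphi_G(v')$, because the maximal edge covering $zx$ sits before $e'$), hence $\psi(z)=\sigma(z)$ by compatibility in property \textsf{X} of $(\psi,g)$, and then the same ``\textsf{Y}-of-$(\psi,g')$ then \textsf{X}-of-$(\phi',g')$'' maneuver gives $\phi'(x)\neq\psi(z)=\sigma(z)$.

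I expect the main obstacle to be precisely this last point: an edge of $G$ may jump from $S$ — which lives in $\und_G(e)$ and can be far to the right of $\und_G(e')$ — all the way back into the far-left block $\lft_G(e')$, and one must argue from the monotone behaviour of maximal edges that such an edge's right endpoint is actually forced back into $\und_G(e')$, so that the inductively available data attached to $e'$ (properties \textsf{X} for $\phi'$ and \textsf{X}, \textsf{Y} for $\psi$) genuinely applies. Once the three-block decomposition of $\lft_G(e)$ and the isolation of the middle block $I$ are pinned down, the rest is routine gluing of colourings.
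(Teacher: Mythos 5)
Your proposal is correct and follows essentially the same route as the paper's proof: proceed along the ordering of $\mx(G)$ (the paper phrases it as a minimal counterexample rather than induction), glue the coloring $\phi'$ supplied by the previous successful pair $g'$ with the coloring $\psi$ from \eqref{t3} and arbitrary list-colors on the isolated gap vertices, and verify properness and \eqref{p2} via the same ``property \textsf{Y} of $(\psi,g')$ then property \textsf{X} of $(\phi',g')$'' maneuver. The only deviations are cosmetic: where you route the edge from $S$ back into $\lft_G(e')$ through a dominating maximal edge and monotonicity, the paper simply notes that such an edge with right endpoint beyond $v'$ would itself contradict the maximality of $e'$, and your explicit treatment of isolated vertices in the base case is if anything slightly more careful than the paper's assertion that $\lft_G(e)=\emptyset$ there.
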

\begin{proof}
Suppose not. Going through the natural ordering of the elements of $\mx(G)$ induced by $\varphi_G$, let $e$ be the first edge violating Lemma \ref{hasA}. Note that if $e$ is the first maximal edge of $G$, then $\lft_G(e)=\emptyset$, and so $e$ vacuously satisfies Lemma \ref{hasA}, a contradiction. Therefore, we may choose $e'=u'v'$ distinct from $e$ to be the maximal edge of $G$ immediately before $e$. Since $g$ is successful, there exists a successful pair $g'=(T,\tau)\in \Gamma_w(G,L,e')$ such that $e,e',g$ and $g'$ satisfy \eqref{t3} with $\psi$ as in \eqref{t3}. Also, from $g' \in \Gamma_w(G,L,e')$ being successful, and the choice of $e$, it follows that there exists an $L|_{\lft_G(e')}$-coloring $\phi'$ of $G[\lft_G(e')]$ such that $(\phi',g')$ has property \textsf{X}. Note that every vertex in the set $\lft_G(e)$ is either isolated or belongs to the disjoint union of $\lft_G(e')$ and $\und_G(e')\setminus \und_G(e)$. So we can define the map $\phi:\lft_G(e)\rightarrow [3]$ as $\phi(x)=\phi'(x)$ for all $x\in \lft_{G}(e')$, $\phi(x)=\psi(x)$ for all $x\in \und_G(e')\setminus \und_G(e)$, and $\phi(x)\in L(x)$ is chosen arbitrarily for every isolated vertex $x\in \lft_G(e)$ (the latter is possible because $L(x)\neq \emptyset$.) We deduce the following.

\sta{\label{phiiscoloring} The map $\phi$ is an $L|_{\lft_{G}(e)}$-coloring of $G|\lft_{G}(e)$.}

Suppose not. Then since $\phi'$ is an $L|_{\lft_{G}(e')}$-coloring of $G[\lft_{G}(e')]$ and $\psi$ is an $L|_{\und_G(e')}$-coloring of $G[\und_G(e')]$, there exists an edge $xy\in E(G)$ with $x\in \lft_{G}(e')$ and $y\in \und(e')_G\setminus \und_G(e)$ such that $\phi'(x)=\phi(x)=\phi(y)=\psi(y)$. Now, by \eqref{t3}, $(\psi,g')$ has property \textsf{Y}, which together with \eqref{q2} for $\psi$ and $\tau$, implies that $x$ has a neighbor $z\in \tau^{-1}(\psi(y))$. In other words, $x$ has a neighbor $z\in T$ with $\tau(z)=\psi(y)=\phi'(x)$. But this shows that $\phi'$ and $\tau$ do not satisfy \eqref{p2}, which violates our assumption that $(\phi',g')$ has property \textsf{X}. This proves \eqref{phiiscoloring}.

\sta{\label{phihasA} The pair $(\phi,g)$ has property \textsf{X}.}

Note that from $\lft_{G}(e)\cap S=\emptyset$, it follows readily that $(\phi,\sigma)$ satisfies \eqref{p1}. So it remains to show that \eqref{p2} holds for $(\phi,\sigma)$, that is, $\sigma\cup \phi$ is an $L|_{S\cup \lft_{G}(e)}$-coloring of $G[(S\cup \lft_{G}(e)]$. Suppose not. By \eqref{phiiscoloring}, $\phi$ is an $L|_{\lft_{G}(e)}$-coloring of $G[\lft_{G}(e)]$ and $\sigma$ is an $L|_{S}$-coloring of $G[S]$. Thus, there exists an edge $xy\in E(G)$ with $x\in \lft_{G}(e)$ and $y\in S$ such that $\phi(x)=\sigma(y)$. First, assume that $x\in \lft_{G}(e')$, and so $\phi'(x)=\phi(x)=\sigma(y)$. Then since $e'$ is a maximal edge of $G$, we have $y\in \und_G(e')\cap S$. On the other hand, by \eqref{t3}, $(\psi,g)$ has property \textsf{X}, and so by \eqref{p1}, $\psi$ and $\sigma$ are compatible. Thus, we have $\sigma(y)=\psi(y)$, and so $\phi'(x)=\psi(y)$. Now, by \eqref{t3}, $(\psi,g')$ has property \textsf{Y}, which along with \eqref{q2} for $\psi$ and $\tau$ implies that $x$ has a neighbor $z\in \tau^{-1}(\psi(y))$. In other words, $x$ has a neighbor $z\in T$ with $\tau(z)=\psi(y)=\phi'(x)$. But this implies that $\phi'$ and $\tau$ do not satisfy \eqref{p2}, which violates our assumption that $(\phi',g')$ has property \textsf{X}. It follows that $x\notin \lft_{G}(e')$, that is, $x\in \und_G(e')\setminus \und_G(e)$, and so $\psi(x)=\phi(x)=\sigma(y)$. But this shows that $\sigma\cup \psi$ is not an $L|_{S\cup \und_G(e')}$-coloring of $G[(S\cup \und_G(e'))]$, and so by \eqref{p2}, $(\psi,g)$ does not have property \textsf{X}, which in turn contradicts \eqref{t3}. This proves \eqref{phihasA}.\vsp

Finally, from \eqref{phiiscoloring} and \eqref{phihasA}, we conclude that $G[\lft_{G}(e)]$ admits an $L|_{\lft_{G}(e)}$-coloring $\phi$ such that $(\phi,g)$ has property \textsf{X}, a contradiction. This completes the proof of Lemma \ref{hasA}.
\end{proof}

\begin{lemma}\label{hasAB}
Let $w\in \mathbb{N}$ and let $(G,L)$ be an instance of the \textsc{Ordered Graph List-3-Coloring Problem} where $G$ is $J_w$-free. Let $\phi$ be an $L$-coloring of $G$. Assume that for every $e\in \mx(G)$, there exists $g_e\in \Gamma_w(G,L,e)$ such that $(\phi,g_e)$ has both properties \textsf{X} and \textsf{Y}. Then for every $e\in \mx(G)$, $g_e$ is successful.
\end{lemma}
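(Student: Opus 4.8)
The plan is to prove the claim by induction on the rank of $e$ in the natural ordering of $\mx(G)$ induced by $\varphi_G$. This is legitimate because the definition of successfulness is a recursion that only refers to maximal edges strictly before $e$, so an ordinary induction along $\mx(G)$ applies. For the base case, if $e$ is the first maximal edge of $G$, then $g_e$ is successful by the first clause of the definition, with nothing to verify.

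For the inductive step, I would assume $e$ is not the first maximal edge of $G$ and let $e'=u'v'$ be the maximal edge of $G$ immediately before $e$. By the induction hypothesis, $g_{e'}\in \Gamma_w(G,L,e')$ is successful. I would then verify condition \eqref{t3} for $e$ by exhibiting $g'=g_{e'}$ as the required successful pair and $\psi:=\phi|_{\und_G(e')}$ as the required coloring. Since $\phi$ is an $L$-coloring of $G$, its restriction $\psi$ is an $L|_{\und_G(e')}$-coloring of $G[\und_G(e')]$, so it remains only to see that $(\psi,g_e)$ and $(\psi,g_{e'})$ both have properties \textsf{X} and \textsf{Y}. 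By hypothesis $(\phi,g_e)$ and $(\phi,g_{e'})$ have both properties, and $\und_G(e')\subseteq V(G)$; applying Lemma \ref{PQprop} with $X=V(G)$ and $X'=\und_G(e')$ (once for $g_e$, once for $g_{e'}$, and for each of the two properties) gives that $(\psi,g_e)=(\phi|_{\und_G(e')},g_e)$ and $(\psi,g_{e'})=(\phi|_{\und_G(e')},g_{e'})$ both have properties \textsf{X} and \textsf{Y}. Hence \eqref{t3} holds and $g_e$ is successful, completing the induction.

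I do not expect a genuine obstacle here: the lemma is essentially a bookkeeping statement saying that an honest $L$-coloring $\phi$, together with the local guesses $g_e$ that Lemma \ref{ABexists} certifies to be compatible with $\phi$, propagates correctly through the recursive definition of successfulness. The only two points deserving a moment of care are (i) confirming that the recursion defining successfulness is well-founded along $\mx(G)$, so that plain induction is valid, and (ii) checking that $\phi|_{\und_G(e')}$ is the correct witness coloring in \eqref{t3} — which is exactly the monotonicity of properties \textsf{X} and \textsf{Y} under shrinking the domain of the coloring, i.e.\ Lemma \ref{PQprop} applied to clauses \eqref{p2} and \eqref{q2}.
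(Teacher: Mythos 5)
Your proposal is correct and follows essentially the same route as the paper: the paper argues via a minimal counterexample along the ordering of $\mx(G)$ (equivalent to your induction), takes $e'$ immediately before $e$, and uses $\psi=\phi|_{\und_G(e')}$ together with Lemma \ref{PQprop} to verify \eqref{t3} with $g'=g_{e'}$. The paper additionally records two auxiliary claims about $(\sigma,g_{e'})$ and $(\sigma\cup\tau,g_{e'})$, but these are not required by condition \eqref{t3}, so your streamlined argument is complete.
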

\begin{proof}
Suppose not. Going through the natural ordering of the elements of $\mx(G)$ induced by $\varphi_G$, let $e$ be the first edge where $g_e=(S,\sigma)$ is not successful. So $e$ is not the first maximal edge of $G$, and we may choose $e'=u'v'$ distinct from $e$ to be the maximal edge of $G$ immediately before $e$. It follows that $g_{e'}=(T,\tau)$ is successful. Since both $(\phi,g_e)$ and $(\phi,g_{e'})$ have property \textsf{X}, by \eqref{p1}, $\phi$ and $\sigma$ are compatible, and $\phi$ and $\tau$ are compatible, as well. It follows that $\sigma=\phi|_S$ and $\tau=\phi|_{T}$. Now, from $\sigma=\phi|_S$, Lemma \ref{PQprop} and the fact that $(\phi,g_{e'})$ has property \textsf{X}, it follows that:

\sta{\label{hass1} The pair $(\sigma,g_{e'})$ has property \textsf{X}, and so $\sigma\cup \tau$ is an $L|_{S\cup T}$-coloring of $G[(S\cup T)]$.}

Also, from \eqref{hass1}, $\sigma=\phi|_S$ and $\tau=\phi|_{T}$, it follows that $\sigma\cup \tau=\phi|_{(S\cup T)}$. This, along with Lemma \ref{PQprop} and the fact that $(\phi,g_{e'})$ has property \textsf{Y}, implies that

\sta{\label{hass2} The pair $(\sigma\cup \tau,g_{e'})$ has property \textsf{Y}.}

Finally, let $\psi=\phi|_{\und_G(e')}$. Then by Lemma \ref{PQprop} and the fact that both $(\phi,g_{e})$ and $(\phi,g_{e'})$ have both properties \textsf{X} and \textsf{Y}, we have

\sta{\label{hass3} $\psi$ is an $L|_{\und_G(e')}$-coloring of $G[\und_G(e')]$ such that both $(\psi,g_e)$ and $(\psi,g_{e'})$ have both properties \textsf{X} and \textsf{Y}.}

But then by \eqref{hass1}, \eqref{hass2} and \eqref{hass3}, $e,e',g_{e}$ and $g_{e'}$ satisfy \eqref{t3}, and so $g_e$ is successful, a contradiction. This completes the proof of Lemma \ref{hasAB}.
\end{proof}

Our last two lemmas use the following definition. For an instance $(G,L)$ of the \textsc{Ordered Graph List-3-Coloring Problem}, we define the instance $(G^*,L^*)$ as follows. Let $G^*$ be the ordered graph with $V(G^*)=V(G)\cup \{q_1,q_2\}$ for two new vertices $q_1$ and $q_2$ and $E(G^*)=E(G)\cup \{q_1q_2\}$, where  $\varphi_{G^*}|_{V(G)}=\varphi_G$ and $\varphi_{G^*}(q_i)=i+ \max_{v\in V(G)}\varphi_G(v)$ for each $i\in \{1,2\}$. Also, let $L^*|_{V(G)}=L$ and $L^*(q_i)=\{i\}$ for every $i\in \{1,2\}$. Note that if $G$ is $J_w$-free, then $G^*$ is $J_{w+1}$-free. Moreover, we have $\mx(G^*)=\mx(G)\cup \{q_1q_2\}$. We deduce the following from Lemmas \ref{ABexists}, \ref{hasA} and \ref{hasAB}.

\begin{lemma}\label{G+}
Let $w\in \mathbb{N}$ and let $(G,L)$ be an instance of the \textsc{Ordered Graph List-3-Coloring Problem} where $G$ is $J_w$-free and $L(v)\neq \emptyset$ for all $v\in V(G)$. Then $G$ admits an $L$-coloring if and only if the set $\Gamma_{w+1}(G^*,L^*,q_1q_2)$ contains a successful element.
\end{lemma}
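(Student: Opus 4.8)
The plan is to derive Lemma \ref{G+} directly from Lemmas \ref{ABexists}, \ref{hasA} and \ref{hasAB} applied to the auxiliary instance $(G^*,L^*)$, using the two facts recorded just before the statement: that $G^*$ is $J_{w+1}$-free whenever $G$ is $J_w$-free, and that $\mx(G^*)=\mx(G)\cup\{q_1q_2\}$. The first thing I would observe is that, since $\varphi_{G^*}(q_1)$ and $\varphi_{G^*}(q_2)$ both strictly exceed $\varphi_G(v)$ for every $v\in V(G)$, the edge $q_1q_2$ is the last maximal edge of $G^*$; consequently $\lft_{G^*}(q_1q_2)=V(G)$, the ordered graph $G^*[\lft_{G^*}(q_1q_2)]$ equals $G$, and $L^*|_{V(G)}=L$. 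I would also note that $L^*(v)\neq\emptyset$ for every $v\in V(G^*)$, since $L^*(q_i)=\{i\}$ and $L^*|_{V(G)}=L$ has all lists nonempty by hypothesis; thus the list-nonemptiness hypotheses of Lemmas \ref{hasA} are met by $(G^*,L^*)$.

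For the ``only if'' direction, suppose $\phi$ is an $L$-coloring of $G$. Extend it to $\phi^*\colon V(G^*)\to[3]$ by setting $\phi^*(q_i)=i$; since $q_1q_2$ is the only edge of $G^*$ not already in $G$ and $q_1,q_2$ receive distinct colors, $\phi^*$ is an $L^*$-coloring of $G^*$. Applying Lemma \ref{ABexists} to $(G^*,L^*)$ with parameter $w+1$ and the coloring $\phi^*$, for every edge $e$ of $G^*$ — in particular for every $e\in\mx(G^*)$ — there is $g_e\in\Gamma_{w+1}(G^*,L^*,e)$ such that $(\phi^*,g_e)$ has both properties \textsf{X} and \textsf{Y}. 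Feeding this family $(g_e)_{e\in\mx(G^*)}$ into Lemma \ref{hasAB} (again for $(G^*,L^*)$ with parameter $w+1$) shows that every $g_e$ is successful; in particular $g_{q_1q_2}\in\Gamma_{w+1}(G^*,L^*,q_1q_2)$ is successful, as desired.

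For the ``if'' direction, suppose $g=(S,\sigma)\in\Gamma_{w+1}(G^*,L^*,q_1q_2)$ is successful. Since $G^*$ is $J_{w+1}$-free, all lists of $L^*$ are nonempty, and $q_1q_2\in\mx(G^*)$, Lemma \ref{hasA} (with parameter $w+1$) applies and produces an $L^*|_{\lft_{G^*}(q_1q_2)}$-coloring $\phi$ of $G^*[\lft_{G^*}(q_1q_2)]$ with $(\phi,g)$ having property \textsf{X}. By the first observation, $\lft_{G^*}(q_1q_2)=V(G)$, $G^*[\lft_{G^*}(q_1q_2)]=G$ and $L^*|_{V(G)}=L$, so $\phi$ is an $L$-coloring of $G$ (property \textsf{X} is not even needed here), which completes the proof.

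The argument is essentially bookkeeping, so there is no serious obstacle; the only points that require genuine care are verifying that $q_1q_2$ is really the last maximal edge of $G^*$ — so that $\lft_{G^*}(q_1q_2)$ is all of $V(G)$, which is precisely why the two padding vertices $q_1,q_2$ are placed to the right of everything — and checking that $J_w$-freeness of $G$ upgrades to $J_{w+1}$-freeness of $G^*$ so that Lemmas \ref{ABexists}, \ref{hasA} and \ref{hasAB} are legitimately applicable to $G^*$.
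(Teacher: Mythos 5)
Your proposal is correct and follows essentially the same route as the paper: extend the $L$-coloring to $G^*$ and combine Lemma \ref{ABexists} with Lemma \ref{hasAB} for the forward direction, then apply Lemma \ref{hasA} to the successful element of $\Gamma_{w+1}(G^*,L^*,q_1q_2)$ and use $G^*[\lft_{G^*}(q_1q_2)]=G$, $L^*|_{\lft_{G^*}(q_1q_2)}=L$ for the converse. The extra checks you record (that $q_1q_2$ is the last maximal edge and that $L^*$ has nonempty lists) are exactly the facts the paper uses implicitly via its definition of $(G^*,L^*)$.
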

\begin{proof}
For the ``only if'' implication, assume that $\phi$ is an $L$-coloring of $G$. Then we may extend $\phi$ to an $L^*$-coloring $\phi$ of $G^*$ by defining $\phi(q_i)=i$ for each $i\in \{1,2\}$. Now, since $G^*$ is $J_{w+1}$-free, by Lemma \ref{ABexists} applied to $(G^*,L^*)$ and $\phi$, for every $e\in \mx(G^*)$, there exists $g_e\in \Gamma_{w+1}(G^*,L^*,e)$ such that $(\phi,g_e)$ has both properties \textsf{X} and \textsf{Y}. It follows from Lemma \ref{hasAB} applied to $(G^*,L^*)$, $\phi$ and these $g_e$'s, that for every $e\in \mx(G^*)$, $g_e$ is successful, and in particular, $g_{q_1q_2}\in \Gamma_{w+1}(G^*,L^*,q_1q_2)$ is successful. For the ``if'' implication, let $g\in \Gamma_{w+1}(G^*,L^*,q_1q_2)$ be successful. By Lemma \ref{hasA} applied to $(G^*,L^*)$ and $g$, $G^*[\lft_{G^*}(q_1q_2)]$ admits an $L^*|_{\lft_{G^*}(q_1q_2)}$-coloring. Also, by the definition of $(G^*,L^*)$, we have $G^*[\lft_{G^*}(q_1q_2)]=G$ and $L^*|_{\lft_{G^*}(q_1q_2)}=L$. Hence, $G$ admits and $L$-coloring. This completes the proof of Lemma \ref{G+}.
\end{proof}

We need some preparation for proving our last lemma. The following theorem was proved in \cite{hajebi:2021} for unordered graphs (which, of course, does not interfere with the correctness of the following.)
\begin{theorem}[Hajebi, Li and Spirkl \cite{hajebi:2021}]\label{kill1}
				Let $(G,L)$ be an instance of the \textsc{Ordered Graph List-3-Coloring Problem}. Then there exists a $(G,L)$-refinement $(\hat{G},\hat{L})$ with the following specifications.
			\begin{itemize}
				\item $(\hat{G},\hat{L})$ can be computed from $(G,L)$ in time $\mathcal{O}(|V(G)|^2)$.
				\item $|\hat{L}(v)|\neq 1$ for all $v\in V(\hat{G})$.
				\item If $G$ admits an $L$-coloring if and only if $\hat{G}$ admits an $\hat{L}$-coloring.
			\end{itemize}
		\end{theorem}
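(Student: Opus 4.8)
The plan is to reduce the list sizes by repeatedly applying a small set of local propagation rules, and to argue both correctness and the running time by a potential-function argument. First I would observe that there are only two situations we need to worry about: a vertex $v$ with $|L(v)|=0$, and a vertex $v$ with $|L(v)|=1$. If some vertex has an empty list, then $(G,L)$ is trivially not colorable, so we may set $\hat G$ to be a single vertex with empty list (or the empty ordered graph together with a record that the answer is ``no''); this is a legitimate $(G,L)$-refinement by the definition in Section \ref{sec:preliminary} only after we delete the other vertices, so more precisely I would have the algorithm stop and output a fixed trivial no-instance. The interesting case is a vertex $v$ with $L(v)=\{c\}$. Here I would apply the following operation, which I will call \emph{eliminating} $v$: first, for every neighbor $u$ of $v$, replace $L(u)$ by $L(u)\setminus\{c\}$; second, delete $v$ from the graph. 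Call the resulting instance $(G',L')$. It is immediate that $G$ admits an $L$-coloring if and only if $G'$ admits an $L'$-coloring: given an $L$-coloring $\phi$ of $G$ we have $\phi(v)=c$ since $L(v)=\{c\}$, so $\phi|_{V(G')}$ avoids $c$ on every neighbor of $v$ and hence is an $L'$-coloring of $G'$; conversely, any $L'$-coloring of $G'$ extends to $G$ by assigning $c$ to $v$, and the extension is proper precisely because no neighbor of $v$ received $c$.

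Next I would define $(\hat G,\hat L)$ to be the instance obtained by iterating this elimination as long as some vertex has a list of size exactly $1$, and stopping (with the trivial no-instance) as soon as some vertex has an empty list. By the previous paragraph each step preserves the existence of a coloring, so the third bullet holds; and the termination condition guarantees the second bullet, $|\hat L(v)|\neq 1$ for all $v\in V(\hat G)$ (in the no-instance branch this is vacuous or can be arranged). It remains to bound the running time. I would use the potential $\Phi(G,L)=\sum_{v\in V(G)}|L(v)|$, which is a nonnegative integer bounded by $3|V(G)|$ initially. Eliminating a vertex $v$ with $L(v)=\{c\}$ removes the term $|L(v)|=1$ from the sum and decreases each neighbor's term by at most the number of neighbors losing colour $c$, so $\Phi$ strictly decreases; hence there are at most $3|V(G)|$ elimination steps. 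Each step requires scanning the neighborhood of $v$ and updating lists, which is $\mathcal O(|V(G)|)$ work if we also maintain a queue of vertices with singleton lists, so the total running time is $\mathcal O(|V(G)|^2)$, giving the first bullet. Finally, $\hat G$ is an induced subgraph of $G$ and lists only shrank, so $(\hat G,\hat L)$ is indeed a $(G,L)$-refinement, and the statement ``$|\hat L(v)|\neq 1$'' together with the ordering being inherited from $G$ shows $(\hat G,\hat L)$ is an ordered-graph instance of the required form.

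The only subtlety — and the point I would be most careful about — is the degenerate output when the propagation reaches an empty list. The definition of $(G,L)$-refinement in the excerpt requires $\hat G$ to be an induced subgraph of $G$, so I cannot simply output ``the empty-list vertex plus nothing else'' without justifying that this is an induced subgraph; the clean fix is to note that a single vertex of $G$ with its list replaced by $\emptyset$ is itself a legitimate refinement that is trivially not $\hat L$-colorable, and $G$ is not $L$-colorable in this branch, so the equivalence in the third bullet still holds. Alternatively one allows $V(\hat G)=\emptyset$ only when $G$ itself has no vertices; since the theorem as used downstream only needs the three listed properties, either convention is harmless. I expect this bookkeeping to be the main (very minor) obstacle; everything else is the straightforward unit-propagation argument sketched above.
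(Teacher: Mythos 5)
Your argument is correct, and there is nothing in the paper to compare it against line by line: Theorem~\ref{kill1} is imported from \cite{hajebi:2021} and is stated here without proof. Your unit-propagation scheme (repeatedly pick $v$ with $|L(v)|=1$, remove its color from the lists of its neighbors, delete $v$; stop when no singleton lists remain) is exactly the natural argument, and all three bullets check out: $\hat G$ is an induced subgraph with shrunken lists, hence a $(G,L)$-refinement; the step-by-step equivalence composes to give the colorability equivalence; and the potential $\sum_v|L(v)|\le 3|V(G)|$ (or simply the fact that each step deletes a vertex) bounds the number of iterations, giving $\mathcal{O}(|V(G)|^2)$ with a queue of singleton-list vertices. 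The one point you flag as delicate is in fact a non-issue: the second bullet only forbids lists of size exactly $1$, so a vertex whose list becomes empty may simply be left in place ($|\hat L(v)|=0\neq 1$), and the resulting instance is correctly non-colorable; your alternative of outputting a single vertex of $G$ with list $\emptyset$ is also a legitimate refinement, so either convention works. It is worth noting that your construction is genuinely different from the in-paper Lemma~\ref{updtL}, which performs a related propagation but keeps the instance spanning and only guarantees $L'(u)\cap L'(v)=\emptyset$ for edges with $|L'(v)|=1$; achieving the stronger conclusion $|\hat L(v)|\neq 1$ of Theorem~\ref{kill1} requires deleting the forced vertices, as you do, and everything downstream in the paper (e.g.\ the use of the second bullet in Lemma~\ref{checksuccess}) is compatible with your version.
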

   
The following can be proved via a reduction to the \textsc{2SAT Problem}, and has been discovered independently by many authors \cite{2sat1,2sat2,2sat3}.
\begin{theorem}[Edwards \cite{2sat1}]\label{2LC}
Let $k\in \mathbb{N}$ be fixed and  $(G,L)$ be an instance of the \textsc{List-$k$-Coloring Problem} with $|L(v)|\leq 2$ for every $v\in V(G)$. Then it can be decided in time $\mathcal{O}(|V(G)|^2)$ whether $G$ admits an $L$-coloring.
\end{theorem}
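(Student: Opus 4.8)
The plan is to reduce the problem to the \textsc{2SAT Problem}, which can be solved in time linear in the length of the input formula. First I would dispose of the trivial cases: if $L(v)=\emptyset$ for some $v$, output ``no''; so from now on $1\le |L(v)|\le 2$ for every $v$. It is convenient (though not strictly necessary) to first run a propagation step — in the spirit of Theorem \ref{kill1}, with the obvious modification for an arbitrary fixed $k$ — repeatedly deleting, for each vertex $v$ with $L(v)=\{c\}$, the color $c$ from the list of every neighbor of $v$; if some list becomes empty we output ``no'', and otherwise, after $\mathcal{O}(|V(G)|^2)$ time, we reach an instance equivalent to the original in which every vertex with a list of size $1$ is anticomplete to the set of vertices having that color in their list. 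Such vertices impose no further constraint and can be colored last and then ignored, so we are reduced to the case $|L(v)|=2$ for all $v$.

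Next I would build the 2SAT formula. For each vertex $v$, write $L(v)=\{c_v^0,c_v^1\}$ and introduce a Boolean variable $x_v$ with the intended reading that $x_v$ is true if and only if $\phi(v)=c_v^1$; thus the event ``$\phi(v)=c_v^i$'' corresponds to the literal $\ell_v^i$, namely $x_v$ if $i=1$ and $\lnot x_v$ if $i=0$. For each edge $uv\in E(G)$ and each common color $c\in L(u)\cap L(v)$ — of which there are at most two — I add the clause $(\lnot\ell_u^{i}\vee\lnot\ell_v^{j})$, where $c=c_u^i=c_v^j$, forbidding $u$ and $v$ from both receiving $c$. This produces a 2SAT instance with $|V(G)|$ variables and $\mathcal{O}(|E(G)|)=\mathcal{O}(|V(G)|^2)$ clauses that can be written down in time $\mathcal{O}(|V(G)|^2)$. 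A truth assignment $\beta$ satisfies all of these clauses if and only if the coloring $\phi$ defined by $\phi(v)=c_v^{\beta(x_v)}$ is proper — the clause associated with $uv$ and a color $c$ is violated precisely when $\phi(u)=\phi(v)=c$ — and any such $\phi$ automatically respects the lists; hence $(G,L)$ is a yes-instance if and only if the formula is satisfiable. Finally, solving the 2SAT instance takes time linear in its size, i.e. $\mathcal{O}(|V(G)|^2)$, which gives the claimed bound.

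The reduction itself is routine; the only points requiring care are (i) the book-keeping that lets one assume every list has size exactly $2$ — one must check that the propagation step both terminates within the stated time and does not change the set of $L$-colorable instances — and (ii) verifying the exact correspondence between satisfying assignments and $L$-colorings, in particular that the edge clause for $uv$ and $c$ is falsified exactly when its endpoints both get color $c$. If one prefers to avoid the preprocessing, one can instead keep the size-$1$ vertices, replace each by the corresponding forced literal, add, for an edge between a size-$1$ vertex forced to $c$ and a size-$2$ vertex having $c$ in its list, a unit clause (a clause of size $1$, still permissible in 2SAT) pinning the latter to its other color, and declare the instance unsatisfiable whenever two adjacent size-$1$ vertices carry the same color; the analysis is identical. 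I do not anticipate any genuine obstacle here — this is essentially a textbook argument — so the ``hard part'' is merely phrasing the reduction cleanly enough that the two verifications above become immediate.
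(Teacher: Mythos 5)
Your reduction to \textsc{2SAT} is correct, and it is exactly the argument the paper has in mind: the paper does not prove Theorem \ref{2LC} itself but cites it (Edwards \cite{2sat1}) and notes that it ``can be proved via a reduction to the \textsc{2SAT Problem},'' which is precisely what you carry out. The handling of lists of size at most one and the $\mathcal{O}(|V(G)|^2)$ accounting are both fine, so there is nothing to add.
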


Our use of Theorem \ref{2LC} though is through the following straightforward corollary of it. We leave the proof to the reader.

\begin{cor}\label{2LCcor}
Let $(G,L)$ be an instance of the \textsc{Ordered Graph List-$3$-Coloring Problem} and $c\in \mathbb{N}\cup \{0\}$. Then the following hold.
\begin{itemize}
    \item If $|\{x\in V(G): |L(x)|=3\}|\leq c$, then one can decide in time $\mathcal{O}(|V(G)|^{2})$ whether $G$ admits an $L$-coloring.
    \item One can decide in time $\mathcal{O}(|V(G)|^{c+4})$ whether $G$ admits an $L$-coloring $\xi$ with $|\xi^{-1}(i)|<c$ for some $i\in [3]$.
\end{itemize}
\end{cor}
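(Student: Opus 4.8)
The statement to prove is Corollary~\ref{2LCcor}, which has two parts, both derived from Theorem~\ref{2LC} (2SAT-based list-coloring with lists of size at most $2$).

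\medskip

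\textbf{Plan for the first bullet.} Let $Z=\{x\in V(G):|L(x)|=3\}$, so $|Z|\le c$. The idea is brute force over $Z$: for each of the at most $3^{|Z|}\le 3^c=\mathcal{O}(1)$ functions $\eta:Z\to[3]$, first check that $\eta$ is a proper coloring of $G[Z]$ with $\eta(x)\in L(x)$ (trivially true since $L(x)=[3]$), then form the $(G,L)$-refinement $(G_\eta,L_\eta)$ where $G_\eta=G\setminus Z$ and, for $v\in V(G)\setminus Z$, set $L_\eta(v)=L(v)\setminus\{\eta(x):x\in N_G(v)\cap Z\}$. Every vertex of $G_\eta$ has $|L_\eta(v)|\le 2$ (since $v\notin Z$ means $|L(v)|\le 2$), so by Theorem~\ref{2LC} we can decide $L_\eta$-colorability of $G_\eta$ in time $\mathcal{O}(|V(G)|^2)$. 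Then $G$ admits an $L$-coloring if and only if some $\eta$ yields an $L_\eta$-colorable $G_\eta$: given an $L$-coloring $\phi$ of $G$, take $\eta=\phi|_Z$, and $\phi|_{V(G)\setminus Z}$ is an $L_\eta$-coloring of $G_\eta$; conversely an $L_\eta$-coloring of $G_\eta$ extends by $\eta$ on $Z$ to an $L$-coloring of $G$, because the list restriction in $L_\eta$ exactly forbids conflicts with $Z$ and $\eta$ is proper on $Z$. Since we run $\mathcal{O}(1)$ iterations each costing $\mathcal{O}(|V(G)|^2)$ (plus $\mathcal{O}(|V(G)|^2)$ to build each refinement), the total is $\mathcal{O}(|V(G)|^2)$.

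\medskip

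\textbf{Plan for the second bullet.} We want to decide whether there is an $L$-coloring $\xi$ with $|\xi^{-1}(i)|<c$ for some $i\in[3]$. Guess the index $i\in[3]$ (three cases) and guess the set $S=\xi^{-1}(i)$ of size at most $c-1$; there are $\sum_{j=0}^{c-1}\binom{|V(G)|}{j}=\mathcal{O}(|V(G)|^{c-1})$ such sets. For a fixed $i$ and $S$: check that $S$ is a stable set of $G$ and that $i\in L(v)$ for all $v\in S$ (else discard). Then we must decide whether $G$ admits an $L$-coloring $\xi$ with $\xi^{-1}(i)=S$ exactly; equivalently, $\xi$ uses only colors $[3]\setminus\{i\}$ on $V(G)\setminus S$ and assigns $i$ on $S$. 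Build the refinement $(G',L')$ with $G'=G\setminus S$ and, for $v\in V(G)\setminus S$, $L'(v)=\big(L(v)\setminus\{i\}\big)\setminus\{\text{already forced}\}$ — more precisely $L'(v)=L(v)\setminus\{i\}$ if $v$ has no neighbor in $S$, and $L'(v)=L(v)\setminus\{i\}$ as well when $v$ has a neighbor in $S$ (color $i$ is already removed), so simply $L'(v)=L(v)\setminus\{i\}$. Now every list $L'(v)$ has size at most $2$, so Theorem~\ref{2LC} decides $L'$-colorability of $G'$ in $\mathcal{O}(|V(G)|^2)$. One must check the equivalence: an $L$-coloring $\xi$ with $\xi^{-1}(i)=S$ restricts to an $L'$-coloring of $G'$ (no vertex outside $S$ gets color $i$, and edges inside $V(G)\setminus S$ are respected); conversely an $L'$-coloring of $G'$ together with color $i$ on all of $S$ gives a proper $L$-coloring of $G$ since $S$ is stable, color $i$ is used nowhere outside $S$, and hence no edge between $S$ and $V(G)\setminus S$ is monochromatic. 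Finally, $G$ has an $L$-coloring $\xi$ with $|\xi^{-1}(i_0)|<c$ for \emph{some} $i_0$ iff for some $i\in[3]$ and some $S$ with $|S|\le c-1$ the above succeeds (take $S=\xi^{-1}(i_0)$). The running time is $3\cdot\mathcal{O}(|V(G)|^{c-1})\cdot\mathcal{O}(|V(G)|^2)=\mathcal{O}(|V(G)|^{c+1})$, which is within the claimed $\mathcal{O}(|V(G)|^{c+4})$.

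\medskip

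\textbf{Main obstacle.} Neither part is deep; the only points needing care are (i) getting the list-restriction in the refinements exactly right so that the equivalences hold in both directions — in particular for the second part one should note that \emph{removing} color $i$ from every vertex outside $S$ (not just neighbors of $S$) is what forces $\xi^{-1}(i)=S$ rather than $\xi^{-1}(i)\supseteq S$, and stability of $S$ is what makes the extension proper — and (ii) bookkeeping the exponents so that both bounds are comfortably below the stated thresholds. Since the proof is routine, it is reasonable to leave it to the reader as the paper does.
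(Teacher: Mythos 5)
Your proposal is correct, and since the paper explicitly leaves this corollary to the reader, your argument is exactly the intended routine one: guess the (constantly many) colorings of the at most $c$ vertices with full lists, respectively the $\mathcal{O}(|V(G)|^{c-1})$ candidate color classes $S=\xi^{-1}(i)$ together with $i\in[3]$, reduce to a refinement in which every list has size at most two, and invoke Theorem \ref{2LC}. Your bookkeeping (stability of $S$, removing color $i$ from all vertices outside $S$ to force $\xi^{-1}(i)=S$, and the resulting bound $\mathcal{O}(|V(G)|^{c+1})$, well within the stated $\mathcal{O}(|V(G)|^{c+4})$) is sound.
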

   The following is the penultimate step toward a proof of Theorem \ref{mainwhale2}.
   
 \begin{lemma}\label{checksuccess}
    Let $w\in \mathbb{N}$ and let $(G,L)$ be an instance of the \textsc{Ordered Graph List-3-Coloring Problem} where $G$ is $J_w$-free. Then one of the following holds.
    \begin{itemize}
    \item $G$ contains a clique on four vertices.
        \item $G$ admits an $L$-coloring $\xi$ with $|\xi^{-1}(i)|<2w$ for some $i\in [3]$.
        \item There exists a $(G,L)$-profile $\Sigma(G,L)$ with the following specifications.
        \begin{itemize}
            \item \sloppy $|\Sigma(G,L)|\leq (|V(G)|^{6w})$ and $\Sigma(G,L)$ can be computed from $(G,L)$ in time $\mathcal{O}(|V(G)|^{6w+2})$.
            \item $G$ admits an $L$-coloring if and only if for some $(G',L')\in \Sigma(G,L)$, $G'$ admits an $L'$-coloring.
            \item Given $(G',L')\in \Sigma(G,L)$ with $L(v)\neq \emptyset$ for all $v\in V(G')$, one can compute the set of all successful elements of $\Gamma_{w+1}(G'^*,L'^*,q_1q_2)$ in time $\mathcal{O}(|V(G)|^{162w^2+18})$.
        \end{itemize}
    \end{itemize}
    \end{lemma}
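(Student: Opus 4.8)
The three outcomes of Lemma~\ref{checksuccess} reflect a case analysis on the structure of cliques (and near-cliques) that appear ``at the front'' of the ordered graph $G$, with the goal of reducing to a situation with a bounded number of large-list vertices so that Corollary~\ref{2LCcor} and Lemma~\ref{G+} become applicable. First I would invoke Theorem~\ref{kill1} to reduce to the case $|\hat L(v)|\neq 1$ for all $v$; losing a polynomial factor of $|V(G)|^2$ in the running time is harmless. Next, I would observe that since $G$ is $J_w$-free, the ``left part'' of $G$, namely $\lft_G(e)$ for the first maximal edge $e$ of $G$ together with the isolated vertices preceding $e$, is $P_1$-free in a suitable sense — in fact a stable set union a few isolated vertices — wait, more carefully: the relevant structural feature is that $J_w$-freeness bounds the number of vertices that can sit to the left of any given edge and still have a neighbor to its right. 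I would make this precise and use it to control the number of vertices with full list $[3]$ that lie to the left of a bounded-size initial clique.

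The core of the argument is the first bullet: if $G$ has no $K_4$, then $\omega(G)\leq 3$, and I would try to branch on the colorings of a maximal clique $Q$ containing the first maximal edge. Here is the dichotomy: either every proper $L$-coloring uses some color class of size $<2w$ — in which case the second bullet holds and we can detect this in time $\mathcal{O}(|V(G)|^{2w+4})$ by Corollary~\ref{2LCcor} — or we may assume each color class has size $\geq 2w$. In the latter case, I would build $\Sigma(G,L)$ by guessing, for a carefully chosen set of $\mathcal{O}(w)$ vertices sitting near the front (the first maximal edge's clique plus a bounded number of left-neighbors forced by $J_w$-freeness and the $2w$-lower-bound on color classes), their colors; this yields $|\Sigma(G,L)|\leq |V(G)|^{6w}$ spanning refinements. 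The $2w$ lower bound on each color class is what lets us argue, via an Erd\H{o}s--Szekeres / $J_w$-free argument in the spirit of \eqref{Erdos-Szekeresing}, that no $L'$-colorable refinement in $\Sigma(G,L)$ has a color class of size $<2w$, and — crucially — that each such $(G',L')$ has only $\mathcal{O}(w^2)$ vertices of full list. This last point is what makes the final bullet's running-time bound $\mathcal{O}(|V(G)|^{162w^2+18})$ come out: by Lemma~\ref{computeguess} we have $|\Gamma_{w+1}(G'^*,L'^*,q_1q_2)|\le \mathcal{O}(|V(G)|^{81(w+1)^2+7})$, and checking successfulness of one element, going edge by edge through $\mx(G'^*)$ via condition~\eqref{t3}, costs a polynomial factor, while property \eqref{t3} itself only needs to be verified against the $\mathcal{O}(|V(G)|^{81(w+1)^2+7})$ candidate colorings $\psi$ — again bounded because those $\psi$ range over $\Gamma$-sets — squaring the bound to roughly $|V(G)|^{162w^2+O(w)}$.

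**Main obstacle.** The hard part will be the middle bullet's two implications pulling in opposite directions: we need $\Sigma(G,L)$ small enough ($|V(G)|^{6w}$, so only $\mathcal{O}(w)$ guessed vertices) yet rich enough that each $L'$-colorable member is ``cleaned up'' — meaning every vertex with a full list has been eliminated except for $\mathcal{O}(w^2)$ of them, so that $\Gamma_{w+1}(G'^*, L'^*, q_1q_2)$ can be searched in the claimed time. Reconciling this requires exactly the right structural lemma: assuming no $K_4$ and assuming every color class has size $\ge 2w$ in any valid coloring, $J_w$-freeness must force that, after guessing the colors on the initial clique and its $J_w$-forced neighborhood, all but boundedly many vertices get their lists trimmed to size $\le 2$. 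I expect this to be proved by contradiction — if too many full-list vertices survived, one extracts (again by Erd\H{o}s--Szekeres applied to the ordering) an induced copy of $J_w$ among them together with an edge from the large color classes — essentially reusing the mechanism of \eqref{Erdos-Szekeresing} but now ``in reverse,'' using the lower bound on class sizes rather than an upper bound. Getting the constants to line up with $27w^2+3$, $6w$, and $162w^2+18$ will be the fiddly but routine end of the argument; the conceptual crux is isolating which $\mathcal{O}(w)$ vertices to branch on so that this trimming is guaranteed.
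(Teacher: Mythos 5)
Your proposal gets the general flavour right (use the failure of the second bullet to know every colour class has size at least $2w$, branch polynomially many times, and use $J_w$-freeness, $K_4$-freeness and Ramsey to reduce to instances that Corollary \ref{2LCcor} can handle), but the two load-bearing steps are not the paper's and, as described, do not work. First, the branching: guessing the \emph{colours} of a chosen set of $\mathcal{O}(w)$ vertices near the first maximal edge gives only $3^{\mathcal{O}(w)}=\mathcal{O}(1)$ branches, not $|V(G)|^{6w}$; the paper instead guesses the \emph{identities} of six pairwise disjoint stable sets $X_1,X_2,X_3,Y_1,Y_2,Y_3$ — the first $w$ and the last $w$ vertices of each colour class — which is where $n^{6w}$ comes from, and then trims colour $i$ from every vertex outside the interval spanned by $X_i,Y_i$ and from every neighbour of $X_i\cup Y_i$, before applying Theorem \ref{kill1}. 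Guessing only ``near the front'' also provides no right-hand padding, and the later argument needs padding on \emph{both} sides of every maximal edge, not just the first one.

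Second, the claim you lean on — that in each refined instance only $\mathcal{O}(w^2)$ vertices retain full lists \emph{globally} — is neither proved by your sketch nor what the paper establishes (if it were true, Corollary \ref{2LCcor} would decide each branch outright and the whole successfulness machinery would be redundant). A large stable set of full-list vertices does not by itself yield $J_w$: one needs an edge spanning it, anticomplete to it and to the outer padding, and your ``edge from the large colour classes'' cannot exist since colour classes are stable. The paper's bound is purely local: to test condition \eqref{t3} for consecutive maximal edges $e'=u'v'$ and $e$ with guesses $g'=(T,\tau)$, $g=(S,\sigma)$, it builds one list instance $\tilde L_\alpha$ on $\und_{\hat G^*}(e')$, and only there the vertices with list of size $3$ are forced to be anticomplete to $X_{i_0}\cup Y_{i_0}\cup\{u',v'\}$ for a colour $i_0$ common to the lists of $u',v'$; a stable set of size $w$ among them, together with $X_{i_0}$, $Y_{i_0}$ and the edge $u'v'$, would be a $J_w$, so with $K_4$-freeness and Ramsey there are fewer than $R(w,4)$ such vertices, and Corollary \ref{2LCcor} decides \eqref{t3} in time $\mathcal{O}(n^2)$ per pair $(g,g')$. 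Relatedly, your plan to verify \eqref{t3} by enumerating the colourings $\psi$ ``ranging over $\Gamma$-sets'' is unsound: $\psi$ is an arbitrary $L$-colouring of $G[\und_G(e')]$ with no bound on its colour classes, so it is not an element of any $\Gamma_{w+1}$ and there is no polynomial-size candidate list; encoding \eqref{t3} as the single instance $\tilde L_\alpha$ with constantly many full-list vertices is exactly the idea your proposal is missing.
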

    \begin{proof}
    
Assume that the first two bullets of Lemma \ref{checksuccess} do not hold. We define $\mathcal{A}$ to be the set of all $6$-tuples $\alpha=(X_1,X_2,X_3,Y_1,Y_2,Y_3)$ where for each $i\in [3]$, $X_i$ and $Y_i$ are stable subsets of $L^{(i)}$ of cardinality $w$ such that for every choice of $x\in X_i$ and $y\in Y_i$, we have $\varphi_G(x)< \varphi_G(y)$, and $X_1,X_2,X_3,Y_1,Y_2$ and $Y_3$ are pairwise disjoint. It is readily observed that:

\sta{\label{computeA} We have $|\mathcal{A}|\leq \mathcal{O}(|V(G)|^{6w})$ and $\mathcal{A}$ can be computed from $(G,L)$ in time $\mathcal{O}(|V(G)|^{6w})$.}

Next, for every $\alpha=(X_1,X_2,X_3,Y_1,Y_2,Y_3)\in \mathcal{A}$, we define $L_{\alpha}:V(G)\rightarrow 2^{[3]}$ as follows. For every vertex $v\in V(G)$, if $v\in X_i\cup Y_i$ for some $i\in [3]$, then let $L_{\alpha}(v)=\{i\}$, and otherwise, let \[L_{\alpha}(v)=\{i\in L(v):\max_{x\in X_i}\varphi_G(x)<\varphi_G(v)<\min_{y\in Y_i}\varphi_G(y)\}\setminus \{i\in [3]:N_G(v)\cap (X_i\cup Y_i) \neq \emptyset\}.\]
Then for every $\alpha\in \mathcal{A}$,  $(G,L_{\alpha})$ is a spanning $(G,L)$-refinement. 
For every $\alpha\in \mathcal{A}$, let $(\hat{G},\hat{L}_{\alpha})$ be the $(G,L_{\alpha})$-refinement as in Theorem \ref{kill1}, and $\Sigma(G,L)=\{(\hat{G},\hat{L}_{\alpha}):\alpha\in \mathcal{A}\}$. Then  $\Sigma(G,L)$ is a $(G,L)$-profile. We deduce:

\sta{\label{computeSigma} $|\Sigma(G,L)|\leq \mathcal{O}(|V(G)|^{6w})$ and $\Sigma(G,L)$ can be computed from $(G,L)$ in time $\mathcal{O}(|V(G)|^{6w+2})$.}

The first assertion is immediate from \eqref{computeA} and the fact that $|\Sigma(G,L)|=|\mathcal{A}|$. The second assertion follows directly from the combination of \eqref{computeA}, the fact that for all $\alpha\in \mathcal{A}$ and $v\in V(G)$, $L_{\alpha}(v)$ can be computed from $(G,L)$ and $\alpha$ in constant time, and the first bullet of Theorem \ref{kill1}. This proves \eqref{computeSigma}.

\sta{\label{coloriffSigma} $G$ admits an $L$-coloring if and only if for some $\alpha\in \mathcal{A}$, $\hat{G}$ admits an $\hat{L}_{\alpha}$-coloring.}

For the ``if'' implication, let $\hat{G}$ admit an $\hat{L}_{\alpha}$-coloring for some $\alpha\in \mathcal{A}$. Then by the last bullet of Theorem \ref{kill1}, $(G,L_{\alpha})$ admits an $L_{\alpha}$-coloring, and so since $(G,L_{\alpha})$ is a spanning $(G,L)$-refinement, it follows that $G$ admits an $L$-coloring. To see the ``only if'' implication, assume that $G$ admits an $L$-coloring $\xi$. Since the second bullet of Lemma \ref{checksuccess} does not hold, we have $|\xi^{-1}(i)|\geq 2w$ for all $i\in [3]$. Thus, with respect to the ordering of $V(G)$ inherited from $\varphi_G$, for each $i\in [3]$, we may define $X_i$ to be the set of first $w$ vertices in $\xi^{-1}(i)$ and $Y_i$ to be the set of last $w$ vertices in $\xi^{-1}(i)$, where $X_i\cap Y_i\neq \emptyset$. We conclude immediately from this definition that $\alpha=(X_1,X_2,X_3,Y_1,Y_2,Y_3)\in \mathcal{A}$. Now, for each $i\in [3]$ and all $v\in X_i\cup Y_i$, we have $\xi(v)=i\in \{i\}=L_{\alpha}(v)$. Also, for every $v\in V(G)\setminus \cup_{i=1}^3(X_i\cup Y_i)$, since $\xi$ is an $L$-coloring of $G$ and by the choice of $X_i,Y_i$ for $i\in \{1,2,3\}$, we have $\max_{x\in X_{\xi(v)}}\varphi_G(x)<\varphi_G(v)<\min_{y\in Y_{\xi(v)}}\varphi_G(y)$ and $N_G(v)\cap (X_{\xi(v)}\cup Y_{\xi(v)})=\emptyset$, which in turn implies that $\xi(v)\in L_{\alpha}(v)$. Therefore, we have $\xi(v)\in L_{\alpha}(v)$ for all $v\in V(G)$, and so $\xi$ is an $L_{\alpha}$-coloring of $G$. But then by the third bullet of Theorem \ref{kill1}, $\hat{G}$ admits an $\hat{L}_{\alpha}$-coloring. This proves \eqref{coloriffSigma}.

\sta{\label{Algexistperguess} Assume that $(\hat{G},\hat{L}_{\alpha})\in \Sigma(G,L)$ with $\hat{L}_{\alpha}(v)\neq \emptyset$ for all $v\in V(\hat{G})$, and $e,e'\in \mx(\hat{G}^*)$ such that $e'$ is immediately before $e$. Let $g'=(T,\tau)\in \Gamma_{w+1}(\hat{G}^*,\hat{L}^*_{\alpha},e')$ be successful, and $g=(S,\sigma)\in \Gamma_{w+1}(\hat{G}^*,\hat{L}^*_{\alpha},e)$. Then one can decide in time $\mathcal{O}(|V(G)|^{3})$ whether $e,e',g$ and $g'$ satisfy \eqref{t3}.}

Let $\alpha=(X_1,X_2,X_3,Y_1,Y_2,Y_3)$. Let $e'=u'v'$, and by symmtery, assume that $\phi_G(u')\leq \phi_G(v')$. Then we have $u',v'\in T$ and $\{u_1,u_2\}\cap \{q_1,q_2\}=\emptyset$, with $q_1,q_2$ as in the definition of $(\hat{G}^*,\hat{L}_{\alpha}^*)$. Also, we have $\hat{L}^*_{\alpha}(u'),\hat{L}^*_{\alpha}(v')\neq \emptyset$, and so by the second bullet of Theorem \ref{kill1}, we have $|L^*_{\alpha}(u')|,|L^*_{\alpha}(v')|\geq 2$. As a result, we may pick $i_0\in \hat{L}^*_{\alpha}(u')\cap \hat{L}^*_{\alpha}(v')$. Note that by the definition of $\hat{L}_{\alpha}$, $\{u',v'\}$ is anticomplete to $X_{i_0}\cup Y_{i_0}$ and we have $\varphi_G(x)<\varphi_G(u')<\varphi_G(v')<\varphi_G(y)$ for every choice of $x\in X_{i_0}$ and $y\in Y_{i_0}$. For every $x\in \und_{\hat{G}^*}(e')$,  we define $C_x\subseteq [3]$ as follows. If $x\in \und_{\hat{G}^*}(e')\setminus \und_{\hat{G}^*}(e)$, then let $C_x$ be the set of all $i\in [3]$ for which $x$ has a neighbor in $\lft_{\hat{G}^*}(e')$ that is anticomplete to $\tau^{-1}(i)$, and if $x\in \und_{\hat{G}^*}(e')\cap \und_{\hat{G}^*}(e)$, the let $C_x$ be the set of all $i\in [3]$ for which either $x$ has a neighbor in $\lft_{\hat{G}^*}(e')$ that is anticomplete to $\tau^{-1}(i)$, or $x$ has a neighbor in $\lft_{\hat{G}^*}(e)$ that is anticomplete to $\sigma^{-1}(i)$. Also, for every $x\in \und_{\hat{G}^*}(e')$, let $D_{x}=C_x\cup \sigma(N_G(x)\cap S)\cup \tau(N_G(x)\cap T)\subseteq [3]$. It is straightforward to show that for every $x\in \und_{\hat{G}^*}(e')$, one can compute $D_{x}$ from $(\hat{G},\hat{L}_{\alpha})$ in time $\mathcal{O}(|V(G)|^2)$. Having the latter definition, we define $\tilde{L}_{\alpha}:\und_{\hat{G}^*}(e')\rightarrow 2^{[3]}$ as follows. For every $x\in \und_{\hat{G}^*}(e')$, 
\begin{itemize}
    \item if $x\in T\setminus S$, then let $\tilde{L}_{\alpha}(x)=\{\tau(x)\}\setminus D_{x}$,
    \item if $x\in (S\cap \und_{\hat{G}^*}(e'))\setminus T$, then let $\tilde{L}_{\alpha}(x)=\{\sigma(x)\}\setminus D_{x}$,
    \item if $x\in S\cap T$, then let $\tilde{L}_{\alpha}(x)=(\{\sigma(x)\}\cap \{\tau(x)\})\setminus D_{x}$, and; 
    \item if $x\in \und_{\hat{G}^*}(e')\setminus (S\cup T)$, then let $\tilde{L}_{\alpha}(x)=\hat{L}_{\alpha}(x)\setminus D_{x}$.
\end{itemize}
Now, one can easily show that
\begin{enumerate}[(i)]
		\item \label{i} $(\hat{G}^*[\und_{\hat{G}^*}(e')],\tilde{L}_{\alpha})$ is a $(\hat{G}^*,\hat{L}^*_{\alpha})$-refinement which can be computed from $(\hat{G},\hat{L}_{\alpha}),e,e',g$ and $g'$ in time $\mathcal{O}(|V(G)|^3)$, and;
		\item \label{ii} we have that $e,e',g$ and $g'$ satisfy \eqref{t3} if and only if $\hat{G}^*[\und_{\hat{G}^*}(e')]$ admits an $\tilde{L}_{\alpha}$-coloring.
\end{enumerate}
 On the other hand, it follows from the definition of $\tilde{L}_{\alpha}$ that $\{x\in \und_{\hat{G}^*}(e'):|\tilde{L}_{\alpha}(x)|=3\}$ is disjoint from and anticomplete to $X_{i_0}\cup Y_{i_0}\cup \{u',v'\}$. Thus, if $G[\{x\in \und_{\hat{G}^*}(e'):|\tilde{L}_{\alpha}(x)|=3\}]$ has a stable set $I$ of size $w$, then $G[I\cup X_{i_0}\cup Y_{i_0}\cup \{u',v'\}]$ is isomorphic to $J_w$, which is impossible. So $G[\{x\in \und_{\hat{G}^*}(e'):|\tilde{L}_{\alpha}(x)|=3\}]$ has no stable set of size $w$. This, together with the assumption that the first bullet of Lemma \ref{checksuccess} does not hold, implies that $|\{x\in \und_{\hat{G}^*}(e'):|\tilde{L}_{\alpha}(x)|=3\}|<R(w,4)$, where $R(w,4)$ is the Ramsey number. But then by first bullet of Theorem \ref{2LCcor}, one can decide in time $\mathcal{O}(|V(G)|^{2})$ whether $G[\und_{\hat{G}^*}(e')]$ admits an $\tilde{L}_{\alpha}$-coloring. Hence, by \eqref{i} and \eqref{ii}, we conclude that given $(\hat{G},\hat{L}_{\alpha}),e,e',g$ and $g'$, one can decide in time $\mathcal{O}(|V(G)|^{3})$ whether $e,e',g$ and $g'$ satisfy \eqref{t3}. This proves \eqref{Algexistperguess}.\vsp
 
 From \eqref{Algexistperguess} and Lemma \ref{computeguess}, we deduce:
 
 \sta{\label{Algexistperedge} Assume that $(\hat{G},\hat{L}_{\alpha})\in \Sigma(G,L)$ with $\hat{L}_{\alpha}(v)\neq \emptyset$ for all $v\in V(\hat{G})$, and $e,e'\in \mx(\hat{G}^*)$ such that $e'$ is immediately before $e$. Assume further that the set of all successful elements of $\Gamma_{w+1}(\hat{G}^*,\hat{L}^*_{\alpha},e')$ is given. Then one can compute the set of all successful elements of $\Gamma_{w+1}(\hat{G}^*,\hat{L}^*_{\alpha},e)$ in time $\mathcal{O}(|V(G)|^{162w^2+17})$.}
 
 As a result, 
 
\sta{\label{Algexist} Given $(\hat{G},\hat{L}_{\alpha})\in \Sigma(G,L)$ with $\hat{L}_{\alpha}(v)\neq \emptyset$ for all $v\in V(\hat{G})$,
one can compute the set of all successful elements of $\Gamma_{w+1}(\hat{G}^*,\hat{L}^*_{\alpha},q_1q_2)$ in time $\mathcal{O}(|V(G)|^{162w^2+18})$.}

First, we compute $\mx(\hat{G}^*)$, which by Lemma \ref{computemax} is doable in time $\mathcal{O}(|V(G)|^4)$. Then we sort the elements of $\mx(\hat{G}^*)$ according to the natural ordering induced by $\varphi_G$, which is immediately observed to be doable in time $\mathcal{O}(|V(G)|^2)$. Now, for the first maximal edge $e_0$ of $\hat{G}^*$, all elements of $\Gamma_w(\hat{G}^*,\hat{L}^*_{\alpha},e_0)$ are successful. Also, by Lemma \ref{computemax}, we have $|\mx(\hat{G}^*)|\leq |V(G)|-1$. Therefore, going through the ordering of the elements of $\mx(\hat{G}^*)$ starting at the maximal edge immediately after $e_0$ and applying \eqref{Algexistperedge}, we can compute the set of all successful elements of $\Gamma_w(\hat{G}^*,\hat{L}^*_{\alpha},q_1q_2)$ in time $\mathcal{O}(|V(G)|^{162w^2+18})$. This proves \eqref{Algexist}.\vsp

Finally, by \eqref{computeSigma}, \eqref{coloriffSigma} and \eqref{Algexist}, $\Sigma(G,L)$ is a $(G,L)$-refinement satisfying the third bullet of Lemma \ref{checksuccess}. This completes the proof.
\end{proof}

Now we are in a position to prove Theorem \ref{mainwhale2}.

\begin{proof}[Proof of Theorem \ref{mainwhale2}]
We propose the following algorithm. 

	\begin{enumerate}[\textit{Step }1:]
		\item \label{s1} Determine whether $G$ has a clique on four vertices. If so, return ``$G$ does not admit an $L$-coloring.'' Otherwise, go to the next step.
		\item \label{s2} Determine whether $G$ admits an $L$-coloring $\xi$ with $|\xi^{-1}(i)|<2w$ for some $i\in [3]$. If so, return ``$G$ admits an $L$-coloring.'' Otherwise, go to the next step.
		\item \label{s3}  Compute the $(G,L)$-profile $\Sigma(G,L)$ described in the third bullet of Lemma \ref{checksuccess}.
		\item \label{s4} Compute the set $\Sigma^{\flat}(G,L)$ of all $(G,L)$-refinements $(G',L')\in \Sigma(G,L)$ with $L'(v)\neq \emptyset$ for all $v\in V(G')$. If $\Sigma^{\flat}(G,L)=\emptyset$, then return ``$G$ does not admit an $L$-coloring." Otherwise, go to the next step.
		\item \label{s5} For each $(G',L')\in \Sigma^{\flat}(G,L)$, compute the set of all successful elements of $\Gamma_{w+1}(G'^*,L'^*,q_1q_2)$. If this set is non-empty for some $(G',L')\in \Sigma^{\flat}(G,L)$, the return ``$G$ admits an $L$-coloring,'' otherwise, return ``$G$ does not admit an $L$-coloring.''
		\end{enumerate}
		First we examine the correctness. If the algorithm stops at step \ref{s1} or \ref{s2}, then its correctness is trivial. Also, if it stops at step \ref{s4}, then the correctness follows from the second dash of the third bullet of Lemma \ref{checksuccess}. Moreover, if the algorithm stops at step \ref{s5}, then its correctness is implied by Lemma \ref{G+} combined with the second dash of the third bullet of Lemma \ref{checksuccess}.
		
		It remains to evaluate the running time of the algorithm. Step \ref{s1} runs in time $\mathcal{O}(|V(G)|^4)$. By the second bullet of Corollary \ref{kill1}, step \ref{s2} runs in time $\mathcal{O}(|V(G)|^{2w+4})$. By the first dash of the third bullet of Lemma \ref{checksuccess}, step \ref{s3} runs in time $\mathcal{O}(|V(G)|^{6w+2})$, and, having completed step \ref{s3}, step \ref{s4} runs in time $\mathcal{O}(|V(G)|^{6w+1})$. Finally, by the third dash of the third bullet of Lemma \ref{checksuccess}, step \ref{s5} runs in time $\mathcal{O}(|V(G)|^{162w^2+18})$, and so the entire algorithm runs in time $\mathcal{O}(|V(G)|^{162w^2+18})$. This completes the proof of Theorem \ref{mainwhale2}.
\end{proof}

\section{Excluding $J_{16}(k,l)$} \label{Sec-H(k,l)proof}
In this section, we present the polynomial-time algorithm claimed in Theorem \ref{shareanend}. 
\begin{theorem}\label{H(k,l)}
    For every fixed $k,l\in \mathbb{N}\cup \{0\}$, the \textsc{Ordered Graph List-3-Coloring Problem} restricted to $J_{16}(k,l)$-free ordered graphs can be solved in polynomial time. Consequently, for every $H\in \{J_{16}(k,l),-J_{16}(k,l)\}$, the \textsc{Ordered Graph List-3-Coloring Problem} restricted to $H$-free ordered graphs can be solved in polynomial time.
\end{theorem}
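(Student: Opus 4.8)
The plan is to reduce the ``consequently'' clause to the main statement and then combine a structural decomposition with the algorithm of Theorem~\ref{mainwhale2} (or, equivalently, with a bounded-treewidth list-colouring routine). For the reduction: if $G$ is $-J_{16}(k,l)$-free then $-G$ is $J_{16}(k,l)$-free, and since our notion of colouring ignores the ordering, $(G,L)$ and $(-G,L)$ have exactly the same $L$-colourings; so it suffices to run the algorithm for the $J_{16}(k,l)$-free case on $(-G,L)$, which is easily computable from $(G,L)$. From now on assume $G$ is $J_{16}(k,l)$-free. I would begin with the usual preprocessing: if $G$ contains $K_4$, report ``not $L$-colourable''; otherwise, by Theorem~\ref{kill1} assume $|L(v)|\ne 1$ for all $v$ (reporting ``no'' if some list is empty); and, for a constant $c=c(k,l)$ fixed below, use the second bullet of Corollary~\ref{2LCcor} to either produce an $L$-colouring with a colour class of size $<c$ or to learn that none exists, in the latter case assuming henceforth that every $L$-colouring of $G$ has all three colour classes of size at least $c$.

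The core is the following structural claim, which I would prove using the Ramsey and Erd\H{o}s--Szekeres arguments of Section~\ref{sec:oneedge}: \emph{there is a constant $z=z(k,l)$ and a polynomial-time-computable set $Z\subseteq V(G)$ with $|Z|\le z$ such that $G\setminus Z$ is $J_{16}$-free} --- equivalently, in $G\setminus Z$ every forward-neighbourhood is a clique, so the ordering of $G\setminus Z$ is a perfect elimination ordering and $G\setminus Z$ is chordal with clique number at most $3$, hence of treewidth at most $2$. Granting this, the algorithm is short: enumerate the at most $3^z$ maps $\rho:Z\to[3]$, and for each that is a proper $L|_Z$-colouring of $G[Z]$, put $L_\rho(x)=L(x)\setminus\{\rho(u):u\in N_G(x)\cap Z\}$ for $x\in V(G)\setminus Z$ and decide whether $(G\setminus Z,L_\rho)$ is $L_\rho$-colourable by dynamic programming along the perfect elimination ordering of $G\setminus Z$ (tracking at most $3^3$ partial colourings of a clique of size at most $3$ at each step); then $G$ is $L$-colourable if and only if some $\rho$ succeeds. (Alternatively one can set up the structural claim so that $G\setminus Z$ is $J_w$-free for a constant $w=w(k,l)$ and finish by calling Theorem~\ref{mainwhale2} on $(G\setminus Z,L_\rho)$.)

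The hard part will be the structural claim. The obstructions to $J_{16}$-freeness are the \emph{cherries}: triples $u<x<y$ with $ux,uy\in E(G)$ and $xy\notin E(G)$. For such a cherry, writing $A$ for the set of vertices before $u$ and $B$ for the set of vertices after $y$, each with $N_G[\{u,x,y\}]$ deleted, $J_{16}(k,l)$-freeness says precisely that $G[A\cup B]$ has no stable set meeting $A$ in at least $k$ vertices and $B$ in at least $l$ vertices. The first step is to turn this into a one-sided domination statement: using that $G$ has no $K_4$ (so that neighbourhoods are triangle-free, Ramsey's theorem converts bounded independence into bounded size, and a triangle-free neighbourhood with many vertices in $B$ spawns a fresh cherry whose own $J_{16}(k,l)$-obstruction can be invoked), show that every cherry is \emph{blocked} --- that is, $A$ is bounded, or $B$ is bounded. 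The main difficulty, and the real content, is then to prove that blocked cherries cannot be spread out along the ordering: a family of cherries whose centres are pairwise far apart would, after thinning it with the sunflower lemma and normalising the relevant order-patterns with Erd\H{o}s--Szekeres exactly as in Lemma~\ref{ABexists}, either produce a forbidden $J_{16}(k,l)$ (the bounded blocking remainder of one cherry, together with that of a later cherry to which it is anticomplete, supplying the $k$ left and $l$ right isolated vertices), or force the centres and their witness vertices onto a common bounded set (because one-sided domination, propagated through the family, leaves only boundedly many candidate witnesses). The ``all colour classes large'' reduction enters here to discard the residual colourable instances in which the cherry-count bound would fail only through a tiny colour class, and to make $z$ and $c$ explicit polynomials in $k$ and $l$. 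Collecting the centres, witnesses and exceptional vertices identified along the way yields the desired set $Z$, which proves Theorem~\ref{H(k,l)}.
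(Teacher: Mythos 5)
Your framing of the ``consequently'' clause via $-G$, the $K_4$ and small-colour-class preprocessing, and the final chordal/bounded-treewidth step are all fine, but the core structural claim is false, and this is a fatal gap. Claim: \emph{there is a constant $z=z(k,l)$ and a set $Z$ with $|Z|\le z$ such that $G\setminus Z$ is $J_{16}$-free.} Counterexample (already for $k=l=1$): let $G$ be the complete bipartite graph with parts $L=\{l_1,\dots,l_n\}$ and $R=\{r_1,\dots,r_n\}$, ordered with all of $L$ before all of $R$. Every cherry has its centre in $L$ and its two leaves in $R$, and every vertex preceding the centre lies in $L$ and is adjacent to both leaves; hence no vertex before a cherry is anticomplete to it, so $G$ is $J_{16}(k,l)$-free for every $k\ge 1$, and it is $K_4$-free. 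Yet after deleting any set $Z$ of constant size, any surviving $l_i$ together with two surviving $r_j,r_{j'}$ is an ordered induced $J_{16}$, so no bounded $Z$ works. Your ``all colour classes large'' reduction does not rescue this: append $m$ pairwise disjoint triangles (each with consecutive vertices) after $R$; the graph stays $J_{16}(k,l)$-free and $K_4$-free, every $L$-colouring now has all colour classes of size at least $m$, and the cherries in the bipartite block are untouched. The alternative formulation (``$G\setminus Z$ is $J_w$-free for a constant $w$'') fails as well, e.g.\ for an induced matching $a_1<\dots<a_n<b_1<\dots<b_n$ with edges $a_ib_i$ (cherry-free, hence $J_{16}(k,l)$-free, but containing $J_w$ with $w\approx n/3$ robustly under bounded deletion), again with triangles appended to defeat the preprocessing; and inserting the matching between the bipartite block and the triangles defeats even the disjunction of the two formulations. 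So the ``blocked cherries cannot be spread out'' step you flag as the real content cannot be carried out: in the bipartite example every cherry is blocked on the left, and the blocked cherries are spread across $n$ centres.

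The paper's proof avoids any bounded deletion set. It guesses, for each colour $i$, the first $k$ and last $l$ vertices of colour class $i$ (polynomially many guesses, Lemma~\ref{fwdnbr}), and uses $J_{16}(k,l)$-freeness to show that after the resulting list updates every vertex whose list still has size at least $2$ has at most two forward neighbours among such vertices; note that the set of vertices whose lists shrink here is typically unbounded (in the bipartite example it is essentially all of $V(G)$). A separate branch (Lemma~\ref{fwdnbrspecial}, plus Theorem~\ref{2LC}) handles colourings with a small colour class. Then only a \emph{constant} number of further colours are guessed --- the first $k$ big-list vertices together with their forward neighbours, and the last $3l+6$ big-list vertices (Lemma~\ref{chordal}) --- and these guessed vertices supply the $k$ left and $l$ right ``isolated'' vertices of a would-be $J_{16}(k,l)$, forcing the subgraph induced by the remaining big-list vertices to be chordal; Theorem~\ref{thm:chordalL3Col} then finishes. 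In short, the chordal part is carved out by list refinement after polynomially many guesses, not by removing a constant-size vertex set, and your proposal needs to be restructured along those lines to go through.
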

	
The main idea of the algorithm is the following. A graph $G$ is \emph{chordal} if in $G$, every cycle of length at least 4 has an edge connecting two vertices of the cycle but not in the cycle. Equivalently, every induced cycle in $G$ is a triangle.

An ordered graph is $J_{16}$-free if and only if it is chordal and its ordered is a \emph{perfect elimination ordering} of the chordal graph, that is, an ordering such that for each vertex, its forward neighbors are a clique. 

Our goal will be to ``guess'' the first $k$ and last $l$ vertices in $G$ of each color (Lemma \ref{fwdnbr}). After updating lists of their neighbors, we will be able to show that in the induced subgraph of vertices whose lists still contain at least two colors, each vertex has at most two forward neighbors (but they may not be adjacent). Now, within this subgraph, we guess the colors of the first $k$ vertices and their forward neighbors, as well as colors of the last $3l+6$ vertices (Lemma \ref{chordal}). This gives us enough ``padding'' on both ends of the ordering that in the ``middle'', where there are vertices whose colors are not yet determined, we do not have a copy of $J_{16}$; thus these vertices form a chordal graph (and with no clique of size four, for otherwise, a coloring does not exist). 
 
	There is an old known result derived from \cite{ChordalLCol} that the treewidth of a chordal graph can be computed in polynomial-time. Indeed, the treewidth of a chordal graph is bounded by its clique number minus 1, and what we compute is the clique number. 
	We also know that the \textsc{List-$k$-Coloring Problem} restricted to graphs with bounded treewidth is polynomial-time solvable with respect to the input size and the treewidth \cite{ChordalLCol1}. Since graphs with a clique of size $k+1$ do not have a list-$k$-coloring, it follows that:
	\begin{theorem} \label{thm:chordalL3Col}
		For every fixed $k$, the \textsc{List-$k$-Coloring Problem} restricted to chordal graphs is polynomial-time solvable.
	\end{theorem}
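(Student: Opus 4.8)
The plan is to reduce the statement to two classical facts about chordal graphs: that their treewidth equals their clique number minus one, and that \textsc{List-$k$-Coloring} is solvable in polynomial time on graphs of bounded treewidth. Concretely, given a chordal instance $(G,L)$ of \textsc{List-$k$-Coloring}, I would first compute $\omega(G)$, the size of a largest clique of $G$. If $\omega(G)\geq k+1$, then $G$ contains $k+1$ pairwise adjacent vertices, which must receive $k+1$ distinct colours; since only $k$ colours are available, $(G,L)$ admits no $L$-colouring and the algorithm rejects. Otherwise $\omega(G)\leq k$, so the treewidth of $G$ is at most $k-1$, a constant, and I would invoke the bounded-treewidth algorithm for \textsc{List-$k$-Coloring}.

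To carry out the clique-number computation I would use a \emph{perfect elimination ordering} $v_1,\dots,v_n$ of $G$, which exists because $G$ is chordal and can be found in polynomial time (for instance by Lex-BFS). Along such an ordering, for each vertex $v_i$ the set $K_i=\{v_i\}\cup(N_G(v_i)\cap\{v_{i+1},\dots,v_n\})$ is a clique. Moreover any maximum clique $C$ of $G$, taking $v_i$ to be its lowest-indexed vertex, satisfies $C\subseteq K_i$, so $C=K_i$; hence $\omega(G)=\max_i|K_i|$ is computable in polynomial time directly from the ordering, and the rejection test $\omega(G)\geq k+1$ is immediate.

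For the second step the key structural input — and the part that most needs justification — is that a chordal graph admits a tree decomposition all of whose bags are cliques, and hence of width at most $\omega(G)-1\leq k-1$. I would prove this by induction on $|V(G)|$ along the elimination ordering: deleting $v_1$ leaves a chordal graph with $v_2,\dots,v_n$ as a perfect elimination ordering, so by induction it has a tree decomposition into cliques, and since $N_G(v_1)$ is a clique it lies in some bag $K$. Attaching a new bag $\{v_1\}\cup N_G(v_1)$ (a clique of size at most $\omega(G)$) adjacent to $K$ yields a valid tree decomposition of $G$ into cliques: every edge incident to $v_1$ is captured by the new bag, and the bags containing any fixed $w\in N_G(v_1)$ stay connected because the new bag is attached to one containing $w$. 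Given this decomposition, the standard dynamic program for \textsc{List-$k$-Coloring} applies: processing bags from the leaves upward, for each bag $B$ (of size at most $k$) one enumerates the at most $k^{|B|}\leq k^{k}$ colourings of $G[B]$ that are proper and respect $L$, and propagates which partial colourings of the shared interface vertices extend to the processed part. Since $k$ is fixed, each bag contributes a constant number of states and the whole computation runs in time polynomial in $|V(G)|$.

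The main obstacle is establishing the clique-tree / treewidth bound cleanly; once it is in hand, both the clique-number computation and the dynamic program are routine. A subtlety worth care in the write-up is that I only need bags to be \emph{cliques} (so that the width is controlled by $\omega(G)$), not to be exactly the maximal cliques, which avoids the minor bookkeeping about maximality being destroyed when $v_1$ is reinserted; and one should make sure the dynamic program is run on a decomposition of the guaranteed constant width rather than on an arbitrary tree decomposition.
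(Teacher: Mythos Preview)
Your proposal is correct and follows essentially the same approach as the paper: compute the clique number of the chordal graph (the paper simply cites that this can be done in polynomial time), reject if it exceeds $k$, and otherwise invoke the bounded-treewidth algorithm for \textsc{List-$k$-Coloring}. You supply more detail than the paper (which merely quotes the two classical facts and their references), but the underlying argument is identical.
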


	Throughout this section, let $(G,L)$ be an instance of the \textsc{Ordered Graph List-3-Coloring Problem}.

	\begin{lemma}\label{updtL}
		There exists a spanning $(G,L)$-refinement $(G,L')$ such that for all $uv\in E(G)$ with $|L(v)|=1$, $L(u)\cap L(v)=\emptyset$, and $L$ and $L'$ are equivalent for $G$. Moreover, $L'$ can be computed from $L$ in time $\mathcal{O}(n^3)$.
	\end{lemma}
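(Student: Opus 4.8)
The statement asks for a standard ``list-cleaning'' step: iteratively remove a color $c$ from $L(u)$ whenever $u$ has a neighbor $v$ with $L(v) = \{c\}$, and show the resulting list assignment is both equivalent to $L$ and computable in $\mathcal{O}(n^3)$ time.

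The plan is to define $L'$ as the fixed point of the following update rule applied to $(G,L)$: as long as there exist adjacent vertices $u,v$ with $|L(v)| = 1$, say $L(v) = \{c\}$, and $c \in L(u)$, replace $L(u)$ by $L(u)\setminus\{c\}$. First I would argue this process terminates: each step strictly decreases $\sum_{w\in V(G)}|L(w)|$, a nonnegative integer that starts at most $3n$, so at most $3n$ updates occur; hence the fixed point $L'$ exists and is reached after finitely many steps. By construction $L'(w) \subseteq L(w)$ for every $w$, so $(G,L')$ is a spanning $(G,L)$-refinement, and at the fixed point the desired property holds: for every $uv \in E(G)$ with $|L'(v)| = 1$ we have $L'(u)\cap L'(v) = \emptyset$. (One must be mildly careful with the wording ``$|L(v)|=1$'' in the statement versus $|L'(v)|=1$; since the only sets that shrink to size one during the process were already of size one, or could only have been forced smaller, it is cleanest to state and prove the fixed-point property for $L'$ itself, and note that this implies the claimed property — or simply observe that removing colors never increases any list, so if $|L(v)|=1$ under $L$ then $L'(v)=L(v)$ and the condition transfers.)

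Next I would prove equivalence of $L$ and $L'$ for $G$, i.e.\ that every coloring $c$ of $G$ is an $L$-coloring iff it is an $L'$-coloring. Since $L'(w)\subseteq L(w)$, any $L'$-coloring is an $L$-coloring; this direction is immediate. For the converse, suppose $c$ is an $L$-coloring (in particular a proper coloring with $c(w)\in L(w)$ for all $w$) but $c(u)\notin L'(u)$ for some $u$. Track the update step that first removed the color $c(u)$ from $u$'s list: it was removed because of a neighbor $v$ with current list $\{c(u)\}$ at that moment, and by induction on the number of update steps (the inductive hypothesis being that $c$ respects every list present so far), $c(v) = c(u)$, contradicting that $c$ is proper on the edge $uv$. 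Formalizing this cleanly is best done by induction on the sequence of intermediate list assignments $L = L_0, L_1, \dots, L_m = L'$: I would show by induction on $t$ that every $L$-coloring is an $L_t$-coloring, the inductive step being exactly the argument just sketched.

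Finally, the running-time bound. There are at most $3n$ update steps. A naive implementation scans all edges to find an applicable step, costing $\mathcal{O}(|E(G)|) = \mathcal{O}(n^2)$ per step and $\mathcal{O}(n^3)$ total, matching the claimed bound. (A more careful queue-based implementation would run in $\mathcal{O}(n^2)$, but $\mathcal{O}(n^3)$ suffices and is what is stated.) I do not expect any real obstacle here; the only subtlety worth a sentence is the exact interpretation of the hypothesis ``$|L(v)|=1$'' in the lemma statement, which I would resolve by the remark above that lists only shrink, so singleton lists are preserved and the fixed-point property of $L'$ directly yields the asserted property.
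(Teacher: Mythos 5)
Your proposal is correct and follows essentially the same route as the paper: the same iterative list-cleaning rule, termination via the potential $\sum_{w}|L(w)|\le 3n$, equivalence by induction on the intermediate lists $L_0,\dots,L_m$, and the same $\mathcal{O}(n^2)$-per-step, $\mathcal{O}(n^3)$-total running-time analysis. (Your reading of the statement's ``$|L(v)|=1$'' as the fixed-point property of $L'$ matches the paper's intent; only your parenthetical claim that singleton lists are necessarily preserved is slightly off, since a singleton list can be emptied, but this does not affect the argument.)
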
 
	\begin{proof}
		We define a sequence of lists recursively. Let $L_0=L$. Suppose that we have defined $L_i$. If there is an edge $uv\in E(G)$ with $|L_i(v)|=1$ and $L_i(u)\cap L_i(v)\neq \emptyset$, let $L_{i+1}(u)=L_i(u)\backslash L_i(v)$, and $L_{i+1}(w)=L_i(w)$ for all $w\in V(G)\backslash \{u\}$. Otherwise stop and let $L'=L_i$.
		
		This terminates within at most $3n$ steps, as $\sum_{w\in V(G)} |L_{i+1}(w)|\leq \sum_{w\in V(G)} |L_{i}(w)|-1$, and $\sum_{w\in V(G)} |L_{0}(w)|\leq 3n$. In each step, finding an edge $uv\in E(G)$ with $|L(v)|=1$ and $L(u)\cap L(v)\neq \emptyset$ takes time at most $\mathcal{O}(n^2)$ and constructing a new list $L_{i+1}$ takes time $\mathcal{O}(n)$. Thus $L'$ can be computed from $L$ in time $\mathcal{O}(n^3)$.
		
		Since $L_0=L$, $G$ has an $L$-coloring if and only if $G$ has an $L_0$-coloring. For all $L_i$-colorings $c$ of $G$ and for all edges $uv\in E(G)$, $c(v)\in L_i(v)$ and $c(u)\neq c(v)$. Thus $c$ is an $L_{i+1}$-coloring of $G$. For all $L_{i+1}$-colorings $c'$ of $G$, since $L_{i+1}(w)\subseteq L_i(w)$ for all $w\in V(G)$, $c'$ is an $L_i$-coloring of $G$. Thus, $L$ and $L'$ are equivalent for $G$.
	\end{proof}
	
	\begin{lemma} \label{fwdnbr}
		Let $k,l\in \mathbb{N}$ be fixed, and $(G,L)$ be an instance of the \textsc{Ordered Graph List-3-Coloring Problem} restricted to $J_{16}(k,l)$-free ordered graphs. There is a spanning $(G,L)$-profile $\mathcal{L}'_1$ such that:
		\begin{itemize}
			\item $|\mathcal{L}'_1|\leq \mathcal{O}(n^{3(k+l)})$, and $\mathcal{L}'_1$ can be constructed from $L$ in time $\mathcal{O}(n^{3(k+l)+4})$.
			\item For all $(G,L') \in \mathcal{L}'_1$, let $X'=\{v\in V(G):|L'(v)|\geq 2\}$. Then in the graph $G[X']$, every vertex has at most 2 forward neighbors.
			\item If there is an $L$-coloring $c$ of $G$ with $|c^{-1}(i)|\geq k+l$ for all $i\in[3]$, then there exists $(G,L')\in \mathcal{L}'_1$ such that $c$ is an $L'$-coloring.
		\end{itemize}
	\end{lemma}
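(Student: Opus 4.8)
The plan is to imitate, in lighter form, the guessing-plus-list-refinement technique from the proof of Lemma~\ref{checksuccess}. Call a $6$-tuple $\alpha=(A_1,A_2,A_3,B_1,B_2,B_3)$ a \emph{guess} if, for each $i\in[3]$, $A_i$ is a stable subset of $L^{(i)}$ of size $k$ and $B_i$ is a stable subset of $L^{(i)}$ of size $l$, the six sets $A_1,\dots,B_3$ are pairwise disjoint, $A_i\cup B_i$ is stable, and $\varphi_G(a)<\varphi_G(b)$ whenever $a\in A_i$ and $b\in B_i$. There are $\mathcal{O}(n^{3(k+l)})$ guesses, and they can be listed within the claimed time exactly as in \eqref{computeA}. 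Given a guess $\alpha$, let $L'_{\alpha}$ assign the list $\{i\}$ to each vertex of $A_i\cup B_i$ and, to every other vertex $v$, the list
\[
L'_{\alpha}(v)=\Bigl\{\,i\in L(v):\max_{a\in A_i}\varphi_G(a)<\varphi_G(v)<\min_{b\in B_i}\varphi_G(b)\,\Bigr\}\setminus\bigl\{\,i\in[3]:N_G(v)\cap(A_i\cup B_i)\neq\emptyset\,\bigr\}
\]
(optionally also propagating singleton lists as in Lemma~\ref{updtL}). Each $(G,L'_{\alpha})$ is a spanning $(G,L)$-refinement; put $X'_{\alpha}=\{v:|L'_{\alpha}(v)|\ge 2\}$ and place $(G,L'_{\alpha})$ in $\mathcal{L}'_1$ exactly when every vertex of $G[X'_{\alpha}]$ has at most two forward neighbors in $G[X'_{\alpha}]$.

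With this definition the first bullet is immediate — there are $\mathcal{O}(n^{3(k+l)})$ guesses and the filtering condition can be tested in $\mathcal{O}(n^2)$ time per guess — and the second bullet holds by construction. For the third bullet, let $c$ be an $L$-coloring of $G$ with $|c^{-1}(i)|\ge k+l$ for all $i$, and take the \emph{canonical} guess: $A_i$ the first $k$ and $B_i$ the last $l$ vertices of $c^{-1}(i)$ in the order $\varphi_G$ (disjoint since $|c^{-1}(i)|\ge k+l$). As in \eqref{coloriffSigma}, $c$ is an $L'_{\alpha}$-coloring, because any $v\notin\bigcup_i(A_i\cup B_i)$ has a color $i=c(v)$ with $i\in L(v)$, lies strictly between $A_i$ and $B_i$ (being neither among the first $k$ nor among the last $l$ vertices of its color), and is anticomplete to $A_i\cup B_i$. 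It therefore only remains to show that the canonical guess passes the filter, i.e.\ that $G[X'_{\alpha}]$ has maximum forward degree at most $2$.

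This is the heart of the argument. Suppose some $v\in X'_{\alpha}$ has three forward neighbors $w_1,w_2,w_3\in X'_{\alpha}$. Since $v$ is adjacent to all of them and $c(w_j)\neq c(v)$, two of $w_1,w_2,w_3$ get the same color, hence are non-adjacent. Using that $|L'_{\alpha}(x)|\ge 2$ and $c(x)\in L'_{\alpha}(x)$ for each $x\in\{v,w_1,w_2,w_3\}$, and using the presence of a \emph{third} forward neighbor, one can then find $x<y<z$ among $\{v,w_1,w_2,w_3\}$ with $xy,xz\in E(G)$, $yz\notin E(G)$, together with a color $i\in L'_{\alpha}(x)\cap L'_{\alpha}(y)\cap L'_{\alpha}(z)$. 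For such $i$, the set $A_i$ lies entirely before $x$ and $B_i$ entirely after $z$ (by $i\in L'_{\alpha}(x)$ and $i\in L'_{\alpha}(z)$), both are anticomplete to $\{x,y,z\}$, and $A_i$, $B_i$ and $A_i\cup B_i$ are stable; so $G[A_i\cup\{x,y,z\}\cup B_i]$ is isomorphic to $J_{16}(k,l)$, its only edges being $xy$ and $xz$ — contradicting that $G$ is $J_{16}(k,l)$-free.

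The main obstacle is exactly the combinatorial extraction in the last paragraph: pulling, out of a vertex with three forward neighbors in $X'_{\alpha}$, a non-adjacent forward pair \emph{together with} a color common to all three relevant $L'_{\alpha}$-lists. The lists $L'_{\alpha}(v),L'_{\alpha}(w_1),L'_{\alpha}(w_2),L'_{\alpha}(w_3)$ are $2$- or $3$-element subsets of $\{1,2,3\}$, so their pairwise intersections are always nonempty while a triple intersection can be empty; one must use the coloring $c$ and a pigeonhole argument on the $2$-element lists to decide which of the three forward neighbors to discard and which vertex to take as the ``center'' $x$. Everything else — the enumeration count, the filtering test, the verification that $c$ is an $L'_{\alpha}$-coloring, and the final identification of $J_{16}(k,l)$ — is routine.
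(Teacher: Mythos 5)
There is a genuine gap, and it sits exactly where you flagged uncertainty: the claim that a vertex of $X'_{\alpha}$ with three forward neighbors in $G[X'_{\alpha}]$ always yields a center $x$ and a nonadjacent pair $y,z$ with a \emph{common} color in all three lists is false, so the canonical guess need not pass your filter. Concretely, take $v$ with $L'_{\alpha}(v)=\{1,2\}$ and forward neighbors $w_3,w_1,w_2$ (in this order after $v$) with $L'_{\alpha}(w_3)=\{1,2\}$, $L'_{\alpha}(w_1)=\{1,3\}$, $L'_{\alpha}(w_2)=\{2,3\}$, where $w_3$ is adjacent to both $w_1$ and $w_2$ and $w_1w_2\notin E(G)$. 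The only nonadjacent pair among the forward neighbors is $\{w_1,w_2\}$, whose lists intersect in $\{3\}$, while both admissible centers $v$ and $w_3$ have list $\{1,2\}$; every relevant triple intersection is empty, so no copy of $J_{16}(k,l)$ can be exhibited through the guessed sets $A_i,B_i$, and nothing contradicts $J_{16}(k,l)$-freeness. This configuration is compatible with a proper coloring respecting the lists (e.g.\ $c(v)=1$, $c(w_3)=2$, $c(w_1)=c(w_2)=3$), so it can arise for the canonical guess built from a valid $c$; your profile would then discard that refinement (or contain no refinement for which $c$ survives), and the third bullet fails. Singleton propagation as in Lemma \ref{updtL} does not help, since no list in the configuration is a singleton.

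This is precisely why the paper does not filter but instead \emph{iteratively refines}: when a vertex $v$ with three forward neighbors in $X_t$ is found, the $J_{16}(k,l)$-freeness argument (the paper's claim that $v$ has no two nonadjacent forward neighbors sharing a color with $v$) is used not to derive a contradiction but to deduce forced adjacencies, and then forced colors — in the configuration above, $w_1$ and $w_2$ have the two adjacent common neighbors $v,w_3$, so $c(w_1)=c(w_2)=3$ in every proper coloring, and the lists of $w_1,w_2$ and their neighbors are updated accordingly; in the other subcase the color of $v$ itself is forced. These updates preserve every coloring consistent with the current lists (so the canonical $c$ survives), strictly shrink $X_t$, and are repeated until the forward-degree bound holds. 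To repair your proof you would need to replace the filter by this propagation loop (together with the $K_4$ check), which is the essential content of the paper's argument that your write-up omits.
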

	\begin{proof}	
		Let $\mathcal{Q}$ be the set of all 6-tuples $Q=(A_1,A_2,A_3,B_1,B_2,B_3)$ of disjoint subsets of $V(G)$ such that for all $i\in[3]$, $|A_i|=k$ and $|B_i|=l$, $i\in L(v)$ for all $v\in A_i\cup B_i$, and $A_i\cup B_i$ are stable. For each $q\in \mathcal{Q}$, we construct a $(G,L)$-refinement $(G,L^Q)$ as follows. 
		
		The list $L^Q_0$ is defined as follows. For each vertex $v\in V(G)$, we let $L_0^{'Q}(v)=\{i\}$ if $v\in A_i \cup B_i$ for some $i\in [3]$, otherwise let $L^{'Q}_0(v)=L(v)$. For each $i\in [3]$, let $m_i=\max \{\varphi_G(a):a\in  A_i \}$, $n_i=\min \{\varphi_G(b):b\in  B_i \}$. Then, for all $i\in[3]$, remove $i$ from $L_0^{'Q}(v)$ for every $v\in V(G(-\infty:m_i])\backslash A_i$ and every $v\in V(G[n_i:\infty))\backslash B_i$. By Lemma \ref{updtL}, the list $L_0^Q$ such that for all $uv\in E(G)$ with $|L_0^Q(v)|=1$, $L_0^Q(u)\cap L_0^Q(v)=\emptyset$, can be constructed from $L^{'Q}_0$ in polynomial time.
		
		The list $L^Q$ is constructed recursively. Starting from the list $L_0^Q$, we construct a sequence of equivalent list assignments $L_1^Q,L_2^Q,\ldots$ until some $L_s^Q$ satisfies the second property of this lemma. For convenience, every time we define $L_t^Q$ for $0\leq t\leq s$, we also define the following sets. For $\{i,j\}\subseteq [3]$, let $X^{ij}_t=\{v\in V(G):L^Q_t(v)=\{i,j\}\}$, and let $X^{123}_t=\{v\in V(G):L^Q_t(v)=\{1,2,3\}\}$. Let $X_t= X^{12}_t\cup X^{13}_t\cup X^{23}_t\cup X^{123}_t$.
		
        If in the graph $G[X_t]$, every vertex has at most 2 forward neighbors, then let $L^Q=L_t^Q$. Otherwise, there is a vertex $v$ with at least 3 forward neighbors in the graph $G[X_t]$. Notice that if $G$ contains $K_4$ as a subgraph, then $G$ is not $L'$-colorable for any $L':V(G)\rightarrow 2^{[3]}$. We return $\mathcal{L}'_1=\emptyset$ in this case. Thus, we may assume that $G$ contains no $K_4$ from now on. 
		
		Also, we may assume that 
		
		\tbox{\label{noP3} The vertex $v$ does not have two distinct nonadjacent forward neighbors $u,w$ such that $L_t^Q(v)\cap L_t^Q(u)\cap L_t^Q(w)\neq \emptyset$. }
		
		Suppose not. Consider a color $i\in L_t^Q(v)\cap L_t^Q(u)\cap L_t^Q(w)$ for two distinct nonadjacent forward neighbors $u,w$ of $v$. From the construction of $L_t^Q$, we know that $v,u,w$ are not adjacent to any vertex from $A_i\cup B_i$. $A_i$ and $B_i$ are disjoint and $A_i\cup B_i$ is a stable set. The vertices $u,w$ are nonadjacent forward neighbors of $v$. For every $x\in A_i$, $y\in \{u,v,w\}$ and $z\in B_i$,  $\varphi(x) <\varphi(y)< \varphi(z)$. From the constructions of $L_0^Q$, we have $G[A_i\cup B_i\cup \{u,v,w\}]\cong J_{16}(k,l)$, which contradicts the fact that $G$ is $J_{16}(k,l)$-free. This proves \eqref{noP3}.
		
		\medskip 
		If $v\in X^{123}_t$, then for every two forward neighbors $u,w$ of $v$ in $X_t$, $L_t^Q(v)\cap L_t^Q(u)\cap L_t^Q(w)\neq \emptyset$. So by \eqref{noP3}, $u$ and $w$ are adjacent. But then, since $v$ has at least 3 forward neighbors, there exists a $K_4$ as a subgraph of $G$, which is a contradiction. Thus, this case is impossible.
		
		It remains to consider the case $v\in X^{ij}_t$. 
        Since $G$ has no $K_4$, there are two forward neighbors $u,w$ of $v$ in $X_t$ with $uw\notin E$. By \eqref{noP3}, it follows that $L_t^Q(u) \cap L_t^Q(v) \cap L_t^Q(w)= \emptyset$. By symmetry, we may assume that $L_t^Q(u)=\{i,m\}$, $L_t^Q(w)=\{j,m\}$ where $\{i,j,m\}=[3]$. Let $x$ be a forward neighbor of $v$ in $X_t$ different from $u$ and $w$. We have the following subcases.
		\begin{itemize}
			\item $L_t^Q(x)\supseteq \{i,j\}$.
			
			Then $ux, wx \in E(G)$ by \eqref{noP3}. Since $u$ and $w$ have two adjacent neighbors in common, it follows that $c(u)=c(w)$ for every 3-coloring of $G$. We let $L_{t+1}^{'Q}(u)= L_{t+1}^{'Q}(w)=\{m\}$, $L^{'Q}_{t+1}(y)=L^Q_{t}(y)\backslash \{m\}$ for $y\in N(u)\cup N(w)$, and $L^{'Q}_{t+1}(y)=L^Q_{t}(y)$ for all $y\in V(G)\backslash (N[u]\cup N[w])$.
			
			\item $L_t^Q(x)=\{i,m\}$. (The case $\{j,m\}$ follows from symmetry.)
			
			By \eqref{noP3}, we have $ux\in E(G)$. But now $v$ has two adjacent neighbors with list $\{i,m\}$. So in every $L_t^Q$-coloring $c$ of 
			$G$, we have $c(v)=j$. We let $L^{'Q}_{t+1}(v)=\{j\}$, $L^{'Q}_{t+1}(y)= L_{t}(y)\backslash \{j\}$ for $y\in N(v)$, and $L^{'Q}_{t+1}(y)=L_{t}(y)$ for all $y\in V(G)\backslash N[v]$.
		\end{itemize}
		
		At the end of each step, by applying Lemma \ref{updtL}, we replace the list $L^{'Q}_{t+1}$ by an equivalent list $L^Q_{t+1}$ such that for all $uv\in E(G)$ with $|L^Q_{t+1}(v)|=1$, we have $L^Q_{t+1}(u)\cap L^Q_{t+1}(v)=\emptyset$, in time $\mathcal{O}(n^3)$.
		
		For all $t$, $|X_{t+1}|\leq |X_t|-1$, and $|X_0|\leq n$. Thus the algorithm above terminates in at most $n$ steps. In each step $t$, finding the vertex $v$ with at least $3$ forward neighbors in $G[X_t]$ takes time $\mathcal{O}(n)$, constructing the list $L^{'Q}_{t+1}$ takes time $\mathcal{O}(n)$, and constructing the list $L^{Q}_{t+1}$ takes time $\mathcal{O}(n^3)$. So $L^{Q}$ can be constructed in time $\mathcal{O}(n^4)$. We let $\mathcal{L}'_1=\{L^Q: Q\in \mathcal{Q}\}$. There are at most ${n\choose k}\cdot {n-k\choose k}\cdot {n-2k\choose k}=\mathcal{O}(n^{3k})$ different choices of the triple ($A_1,A_2,A_3$), and at most $\mathcal{O}(n^{3l})$ different choices of the triple $(B_1,B_2,B_3)$. For each 6-tuple $Q$ we add at most one list to $\mathcal{L}'_1$. Thus, $|\mathcal{L}'_1|\leq \mathcal{O}(n^{3(k+l)})$. Therefore, $\mathcal{L}_1$ can be constructed from $L$ in time $\mathcal{O}(n^{3(k+l)+4})$.		
		
		Finally, let $c$ be an $L$-coloring of $G$ with $|c^{-1}(i)|\geq k+l$ for all $i\in[3]$. Define $A'_i\subseteq c^{-1}(i)$ to be the set of vertices such that $|A'_i|=k$, and $\varphi(v)>\varphi(u)$ for all $v\in c^{-1}(i)\backslash A'_i$ and $u\in A'_i$, that is, $A'_i$ is the set of first $k$ vertices colored $i$ in $c$. Similarly, for all $i\in [3]$, define $B'_i\subseteq c^{-1}(i)$ to be the set of vertices such that $|B'_i|=l$, and $\varphi(v)<\varphi(u)$ for any $v\in c^{-1}(i)\backslash B'_i$ and $u\in B'_i$. Let $Q'=(A'_1, A'_2, A'_3, B'_1, B'_2, B'_3)$. It follows that $Q'\in \mathcal{Q}$. Thus, the corresponding $(G,L)$-refinement $(G,L^{Q'})$ is in $\mathcal{L}'_1$.
		
		We want to show that $c$ is also an $L^{Q'}$-coloring. We will prove this by induction on $t$. For every vertex $v\in V(G)$, we have $c(v)\in L^{Q'}_0(v)$ from the choice of $Q'$. Thus, $c$ is an $L^{Q'}_0$-coloring. Suppose $c$ is an $L^{Q'}_t$-coloring. Then for $t+1$, from our construction, $c(v)\in L^{'Q'}_{t+1}(v)$ for all vertex $v$. So $c$ is an $L^{'Q'}_{t+1}$-coloring of $G$. By Lemma \ref{updtL}, $c$ is also an $L^{Q'}_{t+1}$-coloring. Thus, the $L$-coloring $c$ is an $L^{Q'}$-coloring of $G$.
	\end{proof}
	
	\begin{lemma} \label{fwdnbrspecial}
		Let $k,l\in \mathbb{N}$ be fixed, and $(G,L)$ be an instance of the \textsc{Ordered Graph List-3-Coloring Problem} restricted to $J_{16}(k,l)$-free ordered graphs. There is a spanning $(G,L)$-profile $\mathcal{L}'_2$ such that:
		\begin{itemize}
			\item $|\mathcal{L}'_2|\leq 3\cdot n^{k+l}$, and $\mathcal{L}'_2$ can be constructed from $L$ in time $\mathcal{O}(n^{k+l+1})$.
			\item For all $(G,L') \in \mathcal{L}'_2$, let $X=\{v\in V(G):|L'(v)|\geq 2\}$. Then $|L'(v)|=2$ and $L'(u)=L'(v)$ for all $u,v\in X$. 
			\item If $c$ is an $L$-coloring of $G$ with $|c^{-1}(i)|< k+l$ for some $i\in [3]$, then there exists $(G,L')\in \mathcal{L}'_2$ such that $c$ is an $L'$-coloring.
		\end{itemize}
	\end{lemma}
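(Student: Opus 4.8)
The plan is to guess which vertices receive the deficient color. For each color $i\in[3]$ and each stable set $S\subseteq L^{(i)}$ with $|S|\le k+l-1$ (the only sets that can occur as a color class), I would introduce the list assignment $L^{i,S}$ on $V(G)$ defined by $L^{i,S}(v)=\{i\}$ for $v\in S$ and $L^{i,S}(v)=L(v)\setminus\{i\}$ for $v\in V(G)\setminus S$, and let $\mathcal{L}'_2$ be the collection of all the resulting pairs $(G,L^{i,S})$. Since $S\subseteq L^{(i)}$ gives $L^{i,S}(v)\subseteq L(v)$ for every $v$, each $(G,L^{i,S})$ is a spanning $(G,L)$-refinement, so $\mathcal{L}'_2$ is a spanning $(G,L)$-profile; note that, unlike in Lemma~\ref{fwdnbr}, this step does not require the $J_{16}(k,l)$-free hypothesis at all.

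For the first bullet I would count: for each of the three colors there are at most $\sum_{j=0}^{k+l-1}\binom{n}{j}$ admissible sets $S$, which a routine estimate bounds by $n^{k+l}$, so $|\mathcal{L}'_2|\le 3\cdot n^{k+l}$; enumerating all these sets takes time $\mathcal{O}(n^{k+l-1})$, and constructing each $L^{i,S}$ (including an $\mathcal{O}(1)$-time check that $S$ is stable) takes time $\mathcal{O}(n)$, for a total of $\mathcal{O}(n^{k+l+1})$. For the second bullet, fix $(G,L^{i,S})\in\mathcal{L}'_2$ and write $[3]=\{i,j,m\}$: every $v\in S$ has $|L^{i,S}(v)|=1$ and so lies outside $X$, while every $v\in V(G)\setminus S$ has $L^{i,S}(v)\subseteq\{j,m\}$; hence each $v\in X$ has $L^{i,S}(v)=\{j,m\}$, which is a two-element list and is the same for all of $X$, as required. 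This is precisely why the refinement deletes the color $i$ from \emph{every} vertex outside $S$ rather than only from the neighbours of $S$.

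For the third bullet, given an $L$-coloring $c$ of $G$ with $|c^{-1}(i)|<k+l$ for some $i\in[3]$, I would take $S=c^{-1}(i)$: this $S$ is stable, is contained in $L^{(i)}$ since $c(v)=i$ forces $i\in L(v)$, and has size at most $k+l-1$, so $(G,L^{i,S})\in\mathcal{L}'_2$. Then $c(v)=i\in L^{i,S}(v)$ for $v\in S$, while for $v\notin S$ one has $c(v)\in L(v)$ and $c(v)\ne i$, hence $c(v)\in L(v)\setminus\{i\}=L^{i,S}(v)$; since $c$ is proper, it is an $L^{i,S}$-coloring of $G$.

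Overall I do not anticipate a genuine obstacle here: this is the easiest of the three preparatory lemmas, and the only point that needs a moment's thought is the verification of the second bullet, which is what forces the exact shape of $L^{i,S}$.
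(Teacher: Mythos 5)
Your construction is exactly the paper's: the paper also guesses the color $i$ and a stable set $A_i\subseteq L^{(i)}$ of size less than $k+l$, sets the list to $\{i\}$ on $A_i$ and to $L(v)\setminus\{i\}$ elsewhere, and verifies the three bullets the same way (including taking $A_i=c^{-1}(i)$ for the third bullet). Your proof is correct and essentially identical, down to the observation that the $J_{16}(k,l)$-free hypothesis is never used.
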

	\begin{proof}
		Let $\mathcal{P}$ be a set of all pairs $P=(i,A_i)$ such that $i\in [3]$ and $A_i\subseteq V(G)$ with $|A_i|< k+l$, $A_i$ stable and $i\in L(v)$ for all $v\in A_i$.  For each $P\in \mathcal{P}$, we construct a $(G,L)$-refinement $(G,L^P)$ as follows. 
		
		Let $L^{P}(v)=\{i\}$ for all $v\in A_i$, and $L^{P}(v)=L(v)\backslash \{i\}$ otherwise. It follows that $L^{P}(v)=[3]\backslash \{i\}$ for all $v\in V(G)$ with $|L^P(v)|\geq 2$.
		
		The set $\mathcal{P}$ is of size at most $3\cdot n^{k+l}$. For each pair $P\in \mathcal{P}$ we add at most one refinement to $\mathcal{L}'_2$. Thus, $|\mathcal{L}'_2|\leq 3\cdot n^{k+l}$. Constructing the list $L^{P}$ takes time $\mathcal{O}(n)$. Thus, $\mathcal{L}'_2$ can be constructed from $L$ in time $\mathcal{O}(n^{k+l+1})$.
		
		Let $c$ be an $L$-coloring of $G$ with $|c^{-1}(i)|< k+l$ for some $i\in [3]$. The pair $P'=(i,c^{-1}(i))$ satisfies the property that $|c^{-1}(i)|< k+l$, $c^{-1}(i)$ is stable and $i\in L(v)$ for all $v\in c^{-1}(i)$. Thus, the corresponding $(G,L)$-refinement $(G,L^{P'})$ is in $\mathcal{L}'_2$. By the construction of $L^{P'}$, $c$ is an $L^{P'}$-coloring. 
	\end{proof}

	\begin{lemma}\label{chordal}
		Let $k,l\in \mathbb{N}$ be fixed, and $(G,L)$ be an instance of the \textsc{Ordered Graph List-3-Coloring Problem} restricted to $J_{16}(k,l)$-free ordered graphs. Let $X=\{v\in V(G):|L(v)|\geq 2\}$ and let us assume that every vertex in $X$ has at most two forward neighbors in $G[X]$ and that $|X|\geq 3k+3l+6$. There is a spanning $(G,L)$-profile $\mathcal{L}_1$ such that:
		\begin{itemize}
			\item $|\mathcal{L}_1|= \mathcal{O}(1)$, and  $\mathcal{L}_1$ can be constructed in time $\mathcal{O}(n^3)$.
			\item For all $(G,L^*)\in \mathcal{L}_1$, let $X^*=\{v\in V(G):|L^*(v)|\geq 2\}$. Then the graph $G[X^*]$ is chordal.
			\item If $c$ is an $L$-coloring of $G$, then there exists $(G,L^*)\in \mathcal{L}_1$ such that $c$ is an $L^*$-coloring of $G$.
		\end{itemize}
	\end{lemma}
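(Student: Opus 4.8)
The plan is to replace $(G,L)$ by $O(1)$ many refinements in each of which the big-list vertices induce a chordal graph, produced by guessing a $3$-colouring on a bounded prefix and a bounded suffix of $X$. The structural engine is that $G[X]$ is $2$-degenerate along $\varphi_G$: in any $Y\subseteq X$ the $\varphi_G$-first vertex of $Y$ has all its $G[Y]$-neighbours forward, hence at most two; so a greedy scan along $\varphi_G$ produces in any $Y\subseteq X$ a stable set of size $\ge\lceil|Y|/3\rceil$. Write $X=\{x_1<\dots<x_{|X|}\}$, set $P=\{x_1,\dots,x_{3k}\}$, $Q=P\cup N^{+}_{G[X]}(P)$ (so $|Q|\le 9k$), and let $R$ be a suffix of $X$ of a fixed length $\Theta(k+l)$ short enough to be disjoint from $P$ (this is where a lower bound on $|X|$ enters; the paper's careful accounting shrinks it to $|X|\ge 3k+3l+6$). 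For each $\gamma\colon Q\cup R\to[3]$ with $\gamma(v)\in L(v)$ for all $v$, let $L^\gamma$ equal $\{\gamma(v)\}$ on $Q\cup R$ and $L$ elsewhere, let $\hat L^\gamma$ be the equivalent cleaned assignment from Lemma~\ref{updtL}, and set $\mathcal L_1=\{(G,\hat L^\gamma):\gamma\}$. There are $\le 3^{|Q\cup R|}=O(1)$ such $\gamma$, each member costs $O(n^3)$ (dominated by Lemma~\ref{updtL}), so $\mathcal L_1$ is a spanning $(G,L)$-profile of size $O(1)$ computable in time $O(n^3)$; and if $c$ is an $L$-colouring then, for $\gamma=c|_{Q\cup R}$, $c$ is an $L^\gamma$- and hence (Lemma~\ref{updtL}) an $\hat L^\gamma$-colouring, giving the third bullet.

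For the second bullet, fix a member and let $X^*=\{v:|\hat L^\gamma(v)|\ge 2\}$. As $\hat L^\gamma(v)\subseteq L^\gamma(v)\subseteq L(v)$ we get $X^*\subseteq X\setminus(Q\cup R)$, so $X^*\subseteq X$ and every vertex of $X^*$ has at most two forward neighbours in $G[X^*]$. If $G[X^*]$ were not chordal it would not be $J_{16}$-free (a $J_{16}$-free ordered graph is chordal, its ordering being a perfect elimination ordering), so some $z\in X^*$ has two non-adjacent forward neighbours $u,w\in X^*$, which --- since $z$ has at most two forward neighbours in $G[X]$ --- are exactly $\{u,w\}$. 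Now build an induced $J_{16}(k,l)$ in $G$. The set $P$ is anticomplete to $X^*$: an edge from $P$ to $X^*\subseteq X\setminus P$ would be a forward edge out of $P$, putting its $X^*$-endpoint in $N^{+}_{G[X]}(P)\subseteq Q$, impossible; and $P$ precedes $z,u,w$. So a stable $k$-subset $A$ of $P$ (which exists by $2$-degeneracy) precedes $z$ and is anticomplete to $\{z,u,w\}$. Every vertex of $R$ follows $w$; delete from $R$ the $\le 2k+4$ vertices in $N^{+}_{G[X]}(u)\cup N^{+}_{G[X]}(w)\cup N^{+}_{G[X]}(A)$; the remainder $R'$ still has $\ge 3l$ vertices if $R$ was chosen long enough, hence contains a stable $l$-subset $B$. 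Then $B$ follows $w$ and is anticomplete to $z$ (whose only forward neighbours are $u,w\notin B$), to $u$ and $w$ (by the deletion), and to $A$ (as $A$ precedes $B$ and an $A$--$B$ edge would lie in $N^{+}_{G[X]}(A)$, deleted). With order $A<z<u<w<B$ and only internal edges $zu,zw$, we get $G[A\cup\{z,u,w\}\cup B]\cong J_{16}(k,l)$, contradicting $J_{16}(k,l)$-freeness of $G$. Hence $G[X^*]$ is $J_{16}$-free, so chordal.

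The \emph{crux} is the constant-size bookkeeping: choosing the lengths of $P$ (hence $Q$) and of $R$ so that simultaneously $A$ is genuinely stable, $A$ and $B$ are anticomplete to $\{z,u,w\}$ and to each other, and $|Q\cup R|$ (hence $|\mathcal L_1|$) stays independent of $n$. The device that makes it work is guessing $N^{+}_{G[X]}(P)$ along with $P$, so that $P$ can be small yet automatically anticomplete to $X^*$; trimming $R$ by the bounded sets $N^{+}_{G[X]}(u),N^{+}_{G[X]}(w),N^{+}_{G[X]}(A)$ then secures the cross-anticompleteness. Optimizing these estimates is precisely what yields the stated hypothesis $|X|\ge 3k+3l+6$ and the suffix length $3l+6$; one also needs the easy fact that a perfect elimination ordering certifies chordality, and that the guesses together with Lemma~\ref{updtL} preserve colourability.
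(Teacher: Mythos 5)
Your overall route is the paper's: guess colours on a bounded ``prefix-type'' set and a bounded suffix of $X$, clean up with Lemma~\ref{updtL}, and show that a vertex $z\in X^*$ with two nonadjacent forward neighbours $u,w\in X^*$ (which is what non-chordality gives, since $J_{16}$-freeness of an ordered graph means every forward neighbourhood is a clique) would produce an induced $J_{16}(k,l)$ in $G$. Your anticompleteness checks are sound: $P$ is anticomplete to $X^*$ because $N^{+}_{G[X]}(P)\subseteq Q$, $B$ avoids $z$ because $N^{+}_{G[X]}(z)=\{u,w\}\not\subseteq R$, and $B$ avoids $u$, $w$ and $A$ because you trimmed $R$ by their forward neighbourhoods; using $2$-degeneracy of $G[X]$ along $\varphi_G$ for the stable sets (instead of the paper's use of the properness of the guessed colouring on the suffix) is also fine. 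The genuine gap is exactly the bookkeeping you defer: you require $R$ to have length at least $2k+3l+4$ (so that $|R'|\ge 3l$ after deleting at most $2k+4$ vertices) \emph{and} to be disjoint from $P$, which needs $|X|\ge 5k+3l+4$. The lemma's hypothesis only provides $|X|\ge 3k+3l+6$, so for $k\ge 2$ and $|X|$ near the threshold no such $R$ exists; writing that ``the paper's careful accounting shrinks it to $3k+3l+6$'' appeals to the statement being proved rather than closing the case, so as written the proof does not cover the full range of the hypothesis.

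The gap is reparable, but one of the repairs has to be carried out. (i) The disjointness of $R$ from $P$ is never used: all your argument needs is that every vertex of $A\subseteq P$ precedes $z$ (true since $z\in X\setminus P$ and $P$ consists of the $\varphi_G$-first $3k$ vertices of $X$) and every vertex of $B\subseteq R$ follows $w$ (true since $w\notin R$ and $R$ is a suffix); with the disjointness requirement dropped, the construction only needs $|X|\ge 2k+3l+4$, which does follow from $|X|\ge 3k+3l+6$, and everything else in your argument goes through verbatim. (ii) Alternatively, the paper avoids the $2k$ extra deletions from the suffix altogether by choosing the prefix greedily: repeat $k$ times ``take the $\varphi_G$-smallest vertex of $X$ not yet covered, put it into $C'$, and put it together with its at most two forward neighbours into $C$.'' Then $|C|\le 3k$, $C'$ is a stable $k$-set, and $C'$ is anticomplete to all of $X\setminus C$ (forward neighbours are in $C$ by construction, backward neighbours are in $C$ by minimality), so the suffix only has to dodge the at most $6$ forward neighbours of $z,u,w$; a suffix of length $3l+6$ then suffices, which is precisely where the threshold $3k+3l+6$ comes from.
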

	\begin{proof}
		First, we define two sets $C'\subseteq C\subseteq X$ as follows. We start with $C'=C=\emptyset$. In each step, we take the vertex $v\in X\backslash C$ with the smallest $\varphi(v)$. Add $v$ and its forward neighbors in $G[X]$ to $C$, and add $v$ to $C'$. We repeat this $k$ times. Since every vertex in $X$ has at most two forward neighbors in $G[X]$, $|C|\leq 3k$. By construction, $C'$ is a stable set of size $k$. Moreover, no vertex in $C'$ is adjacent to a vertex in $X\backslash C$. Define $D\subseteq X$ to be the set of vertices such that $|D|=3l+6$, and $\varphi(v)<\varphi(u)$ for all $v\in X\backslash (C\cup D)$ and $u\in D$, that is, $D$ is the set of last $3l+6$ vertices in $X\backslash C$. Since $|X|\geq 3k+3l+6$, it follows that $C$, $C'$ and $D$ are well-defined.
		
		Let $\mathcal{F}$ be the set of all functions $f:C\cup D\rightarrow [3]$ such that $f$ is an $L$-coloring of $G[C\cup D]$. For every $f\in \mathcal{F}$, we construct a $(G,L)$-refinement $(G,L^{'f})$ such that $L^{'f}(v)=\{f(v)\}$ if $v\in C\cup D$, and $L^{'f}(v)=L(v)$ otherwise. By Lemma \ref{updtL}, there is an equivalent list $L^{f}$ of $L^{'f}$ such that for all $uv\in E(G)$ with $|L^{f}(v)|=1$, $L^{f}(u)\cap L^{f}(v)=\emptyset$. Let $\mathcal{L}_1=\{ (G,L^{f}): f\in \mathcal{F}\}$. There are at most $3^{3k+3l+6}=\mathcal{O}(1)$ possible choices of $f$. Thus, $|\mathcal{L}_1|=\mathcal{O}(1)$. Constructing the set $C$ and $D$ takes time $\mathcal{O}(1)$. Each $L^{'f}$ can be constructed in time $\mathcal{O}(n)$. Each $L^{f}$ can be constructed in time $\mathcal{O}(n^3)$. So $\mathcal{L}_1$ can be constructed in time $\mathcal{O}(n^3)$.
		
		Now let $(G,L^*)\in \mathcal{L}_1$. Every non-chordal ordered graph contains a vertex with two nonadjacent forward neighbors. To be more precise, the vertex with the smallest order in an induced cycle of size at least 4 is a desired vertex. Now we want to show that $G[X^*]$ is chordal using this property. Suppose for a contradiction that in $G[X^*]$, there is a vertex $v_1$ with two nonadjacent forward neighbors $v_2,v_3$. There is a stable set $D'\subseteq D$ of size at least $l$ such that $D'$ is anticomplete to $\{v_1,v_2,v_3\}$. That is because $X^*\subseteq X$ and every vertex in $X$ has at most 2 forward neighbors in $G[X]$, so $D \backslash N(\{v_1,v_2,v_3\})$ is of size at least $3l$. Since $D$ has a 3-coloring by construction, there is a stable set $D'\subseteq D \backslash N(\{v_1,v_2,v_3\})$ of size at least $l$ and which is anticomplete to $\{v_1,v_2,v_3\}$. From the construction above, the sets $\{v_1,v_2,v_3\}$, $C'$ and $D'$ are disjoint. Moreover, for every $x\in C'$, $y\in \{v_1,v_2,v_3\}$ and $z\in D'$,  $\varphi(x) <\varphi(y)< \varphi(z)$. So $G[C'\cup D'\cup \{v_1,v_2,v_3\}]\cong J_{16}(k,l)$, which is a contradiction. Therefore, $G[X^*]$ is chordal.
		
		Finally, let $c$ be an $L$-coloring of $G$. Take the coloring $c'=c|_{C\cup D}$ and consider the corresponding $(G,L)$-refinement $(G,L^{'c'})$ and $(G,L^{c'})$ defined above. Since we have covered all possible colorings $f$ of $G[C\cup D]$, $(G,L^{c'})$ is in $\mathcal{L}_1$. We can verify that $c(v)\in L^{'c'}(v)$ for all vertices $v\in V(G)$. Thus $c$ is also an $L^{c'}$-coloring.
	\end{proof}
	
	\begin{lemma} \label{chordalspecial}
		Let $k,l\in \mathbb{N}$ be fixed, and $(G,L)$ be an instance of the \textsc{Ordered Graph List-3-Coloring Problem} restricted to $J_{16}(k,l)$-free ordered graphs. Let $X=\{v\in V(G):|L(v)|\geq 2\}$ and let us assume that $|X| < 3k+3l+6$. There is a spanning $(G,L)$-profile $\mathcal{L}_2$ such that:
		\begin{itemize}
			\item $|\mathcal{L}_2|= \mathcal{O}(1)$, and $\mathcal{L}_2$ can be constructed in time $\mathcal{O}(n^3)$.
			\item For any $(G,L^*) \in \mathcal{L}_2$, $|L^*(v)|\leq 1$ for all $v\in V(G)$. 
			\item If $c$ is an $L$-coloring of $G$, then there exists $(G,L^*)\in \mathcal{L}_2$ such that $c$ is an $L^*$-coloring of $G$.
		\end{itemize}
	\end{lemma}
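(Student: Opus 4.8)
The plan is to handle this case essentially by brute force, exploiting that the hypothesis $|X|<3k+3l+6$ together with $k,l$ being fixed makes $G[X]$ a constant-sized graph. First I would let $\mathcal{F}$ be the set of all $L|_X$-colorings of $G[X]$, that is, all proper $3$-colorings $f$ of $G[X]$ with $f(v)\in L(v)$ for every $v\in X$. Since $|X|=\mathcal{O}(1)$, we have $|\mathcal{F}|\le 3^{|X|}<3^{3k+3l+6}=\mathcal{O}(1)$, and $\mathcal{F}$ can be computed in time $\mathcal{O}(n)$ to identify $X$ and read off $G[X]$, plus $\mathcal{O}(1)$ to enumerate and test the candidate colorings.

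Next, for each $f\in\mathcal{F}$, I would construct the spanning $(G,L)$-refinement $(G,L^{\prime f})$ with $L^{\prime f}(v)=\{f(v)\}$ if $v\in X$ and $L^{\prime f}(v)=L(v)$ otherwise, and, exactly as in the proof of Lemma~\ref{chordal}, replace $L^{\prime f}$ by an equivalent list $L^{f}$ via Lemma~\ref{updtL}. Setting $\mathcal{L}_2=\{(G,L^{f}):f\in\mathcal{F}\}$ gives $|\mathcal{L}_2|\le|\mathcal{F}|=\mathcal{O}(1)$; each $L^{\prime f}$ is built in time $\mathcal{O}(n)$ and each $L^{f}$ in time $\mathcal{O}(n^3)$, so $\mathcal{L}_2$ is built in time $\mathcal{O}(n^3)$, which is the first bullet.

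For the second bullet, if $v\in X$ then $|L^{\prime f}(v)|=1$, and if $v\notin X$ then $|L^{\prime f}(v)|=|L(v)|\le 1$ by the very definition of $X$; hence $|L^{\prime f}(v)|\le 1$ for all $v$, and passing to the equivalent $L^{f}$ via Lemma~\ref{updtL} only shrinks lists, so $|L^{f}(v)|\le 1$ for all $v$ as well. For the third bullet, given an $L$-coloring $c$ of $G$, put $f=c|_X$; then $f$ is an $L|_X$-coloring of $G[X]$, so $f\in\mathcal{F}$ and $(G,L^{f})\in\mathcal{L}_2$, and $c(v)\in L^{\prime f}(v)$ for every $v\in V(G)$ (with equality on $X$, and $c(v)\in L(v)=L^{\prime f}(v)$ off $X$), so $c$ is an $L^{\prime f}$-coloring and, by the equivalence guaranteed by Lemma~\ref{updtL}, an $L^{f}$-coloring.

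I do not expect any genuine obstacle here: the content of the lemma is just that once all but a bounded number of lists have been pinned to a single color, the remaining choices can be enumerated exhaustively, and this is the base case that terminates the recursion built from Lemmas~\ref{fwdnbr}--\ref{chordal}. The only point needing (routine) care is the bookkeeping, namely checking that every vertex outside $X$ already has $|L(v)|\le 1$ and that the restriction to $X$ of any global $L$-coloring lands in the enumerated family $\mathcal{F}$; both are immediate from the definitions.
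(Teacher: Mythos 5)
Your proposal is correct and follows essentially the same route as the paper: enumerate the constantly many $L|_X$-colorings $f$ of $G[X]$, fix each as a singleton list on $X$ while keeping the (already singleton or empty) lists off $X$, clean up via Lemma~\ref{updtL}, and verify the three bullets exactly as you do. The only difference is cosmetic — the paper's write-up is slightly terser about the intermediate list $L'^{f}$ — so nothing further is needed.
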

	\begin{proof}
		Let $\mathcal{F}$ be the set of all functions $f:X\rightarrow [3]$ such that $f$ is an $L$-coloring of $G[X]$. For every possible function $f\in \mathcal{F}$, we construct a list $L^{f}$ such that $L^{f}(v)=\{f(v)\}$ for all $v\in X$, and $L^{f}(v)=L(v)$ otherwise. Let $\mathcal{L}_2=\{(G,L^{f}): f\in \mathcal{F}\}$.
		
		For every $(G,L^f) \in \mathcal{L}_2$ and for every $v\in V(G)$, if $v\in X$ then $|L^f(v)|\leq 1$; otherwise by the definition of $X$, we have $|L^f(v)|\leq |L^{'f}(v)|\leq 1$. Thus $|L^f(v)|\leq 1$ for all $v\in V(G)$.
		
		Since there are at most $3^{3k+3l+6} = \mathcal{O}(1)$ possible choices of $f$, $|\mathcal{L}_2|= \mathcal{O}(1)$. Each $L^{'f}$ can be constructed in time $\mathcal{O}(n)$, and $L^{f}$ can be constructed from $L^{'f}$ in time $\mathcal{O}(n^3)$. So $\mathcal{L}_2$ can be constructed in time $\mathcal{O}(n^3)$. 
		
		Finally, let $c$ be an $L$-coloring of $G$. Let $c'=c|_{X}$ and consider the corresponding $(G,L)$-refinements $(G,L^{'c'})$ and $(G,L^{c'})$ defined above. Since we have covered all possible $L$-colorings $f:X\rightarrow [3]$, $(G,L^{c'})\in \mathcal{L}_2$. By the construction of $c'$ and $L^{'c'}$, $c$ is an $L^{'c'}$-coloring thus is an $L^{c'}$-coloring.
	\end{proof}

	\begin{theorem} \label{H(k,l)Alg}
		For fixed $k,l\in \mathbb{N}$, there is an algorithm with the following specifications:
		\begin{itemize}
			\item Input: $(G,L)$, which is an instance of the \textsc{Ordered Graph List-3-Coloring Problem} and $G$ is $J_{16}(k,l)$-free.
			\item Output: one of
			\begin{itemize}
				\item an $L$-coloring of $G$;
				\item a determination that $G$ is not $L$-colorable;
				\item a spanning $(G,L)$-profile $\mathcal{L}$ with $|\mathcal{L}|\leq \mathcal{O}(n^{3(k+l)})$ such that for every $(G,L^*)\in \mathcal{L}$, if $X^{L^*}=\{v\in V(G):|L^*(v)|\geq 2\}$, then $G[X^{L^*}]$ is chordal.
			\end{itemize}
			\item Running time: $\mathcal{O}(n^{3(k+l+1)})$.
		\end{itemize}
	\end{theorem}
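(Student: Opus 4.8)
The plan is to assemble Lemmas~\ref{fwdnbr}, \ref{fwdnbrspecial}, \ref{chordal} and \ref{chordalspecial} into a single cascading procedure. The guiding observation is that an $L$-coloring $c$ of $G$ either has all three colour classes of size at least $k+l$ (the regime of Lemma~\ref{fwdnbr}), or has some colour class of size less than $k+l$ (the regime of Lemma~\ref{fwdnbrspecial}); moreover, a refinement in which every list has size at most two, and a refinement in which every list has size at most one, can be resolved on the spot---the former by Theorem~\ref{2LC}, the latter by checking whether the forced assignment is proper---so such refinements never need to be handed back as part of the output profile.

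Concretely, I would run the following steps. \textbf{(1)} Test whether $G$ contains $K_4$; if so, return that $G$ is not $L$-colorable, since a graph with a $K_4$ is not $3$-colorable. \textbf{(2)} Apply Lemma~\ref{fwdnbrspecial} to obtain the profile $\mathcal{L}'_2$ of size $\mathcal{O}(n^{k+l})$; every $(G,L')\in\mathcal{L}'_2$ has all lists of size at most two, so by Theorem~\ref{2LC} we can, in time $\mathcal{O}(n^2)$ per member, decide $L'$-colorability, and if some member is $L'$-colorable we return such an $L'$-coloring (which is an $L$-coloring of $G$) and stop. \textbf{(3)} Otherwise, by the last bullet of Lemma~\ref{fwdnbrspecial} every $L$-coloring of $G$ has all colour classes of size at least $k+l$, so it suffices to search inside the colorings captured by Lemma~\ref{fwdnbr}; apply that lemma to obtain the profile $\mathcal{L}'_1$ of size $\mathcal{O}(n^{3(k+l)})$. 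For each $(G,L')\in\mathcal{L}'_1$, set $X'=\{v\in V(G):|L'(v)|\ge 2\}$, so that every vertex of $X'$ has at most two forward neighbours in $G[X']$. If $|X'|<3k+3l+6$, apply Lemma~\ref{chordalspecial} to $(G,L')$ to replace it by $\mathcal{O}(1)$ refinements all of whose lists have size at most one, and directly check each for a proper coloring, returning one and stopping if found. If $|X'|\ge 3k+3l+6$, apply Lemma~\ref{chordal} to $(G,L')$---its ``at most two forward neighbours'' hypothesis is precisely what Lemma~\ref{fwdnbr} guarantees---and add the $\mathcal{O}(1)$ resulting refinements, each of which has chordal big-list subgraph, to an accumulating profile $\mathcal{L}$. \textbf{(4)} If no coloring has been returned, then: if $\mathcal{L}=\emptyset$, return that $G$ is not $L$-colorable; otherwise return $\mathcal{L}$.

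For correctness one traces an $L$-coloring $c$ of $G$ through the case split, using the last bullet of each of the four lemmas in turn: a small colour class is caught in step~(2); otherwise $c$ is an $L'$-coloring for some member of $\mathcal{L}'_1$, and then either $|X'|$ is small and $c$ is caught among the singleton-list refinements of step~(3), or $|X'|$ is large and $c$ persists into one of the chordal refinements placed in $\mathcal{L}$. Conversely, any coloring produced is, through a chain of refinements, an $L$-coloring of $G$; and ``not $L$-colorable'' is output only when $G$ has a $K_4$, or when every branch of the search has been exhausted without witnessing a coloring and $\mathcal{L}$ is empty. The bound $|\mathcal{L}|\le\mathcal{O}(n^{3(k+l)})$ holds because $\mathcal{L}$ is a union of $\mathcal{O}(n^{3(k+l)})$ spanning profiles of constant size; and the running time is dominated by constructing $\mathcal{L}'_1$ via Lemma~\ref{fwdnbr} together with the $\mathcal{O}(n^{3})$-time post-processing (an application of Lemma~\ref{chordal} or~\ref{chordalspecial}, plus the direct checks) of each of its members, which a routine accounting shows fits the claimed bound.

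Essentially everything above is bookkeeping over the earlier lemmas, so the real content has already been carried out there; the one point that deserves care---and the closest thing here to an obstacle---is checking that the case split is genuinely exhaustive and, crucially, that the two degenerate families of refinements (the size-$\le 2$ lists coming from $\mathcal{L}'_2$, and the size-$\le 1$ lists coming from Lemma~\ref{chordalspecial}) really can be discharged immediately, so that the profile $\mathcal{L}$ returned as output consists only of instances whose big-list subgraph is chordal, as the statement requires.
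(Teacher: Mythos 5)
Your proposal is correct and follows essentially the same route as the paper: combine Lemmas~\ref{fwdnbr}, \ref{fwdnbrspecial}, \ref{chordal} and \ref{chordalspecial}, resolving the members of $\mathcal{L}'_2$ via Theorem~\ref{2LC} and splitting the members of $\mathcal{L}'_1$ according to whether $|X'|\geq 3k+3l+6$. The only (harmless) deviations are the explicit upfront $K_4$ test and that you discharge the singleton-list refinements from Lemma~\ref{chordalspecial} immediately, whereas the paper simply places them into $\mathcal{L}$, where they satisfy the chordality requirement vacuously since their big-list set is empty.
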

	\begin{proof}
		Let $\mathcal{L}'_1$ be as in Lemma \ref{fwdnbr}. Let $\mathcal{L}'_2$ be as in Lemma \ref{fwdnbrspecial}. By Theorem \ref{2LC}, every $(G,L)$-refinement $(G,L')\in \mathcal{L}'_2$ can be solved in time $\mathcal{O}(n^2)$. If this finds an $L$-coloring of $G$, we just output the coloring instead of processing with the other things.
		
		For every $(G,L)$-refinement $(G,L')\in \mathcal{L}'_1$, let $X=\{v\in V(G):|L'(v)|\geq 2\}$. If $|X|\geq 3k+3l+6$, then there is a spanning $(G,L)$-profile $\mathcal{L}^{L'}$ which satisfies the properties in Lemma \ref{chordal}. If $|X|< 3k+3l+6$, then there is a spanning $(G,L)$-profile $\mathcal{L}^{L'}$ which satisfies the properties in Lemma \ref{chordalspecial}. Finally, let $\mathcal{L}=\cup_{(G,L')\in \mathcal{L}'_1} \mathcal{L}^{L'}$. From the constructions, for every $(G,L^*)\in \mathcal{L}$, $G[X^{L^*}]$ is chordal. Since $|\mathcal{L}^{L'}|=\mathcal{O}(1)$ and $|\mathcal{L}'_1|\leq \mathcal{O}(n^{3(k+l)})$,  $|\mathcal{L}|\leq \mathcal{O}(n^{3(k+l)})$. The collection $\mathcal{L}^{L'}$ can be constructed from $L'$ in time $\mathcal{O}(n^3)$, and $|\mathcal{L}'_1|\leq \mathcal{O}(n^{3(k+l)})$. Thus, $\mathcal{L}$ can be constructed from $L$ in time $\mathcal{O}(n^{3(k+l+1)})$. 
	\end{proof}

  	Now we are ready to prove Theorem \ref{H(k,l)}.
   	\begin{proof}[Proof of Theorem \ref{H(k,l)}]
   		Let $G$ be a $J_{16}(k,l)$-free ordered graph and $L$ be a 3-list-assignment for $G$. 
     We apply the algorithm from Theorem \ref{H(k,l)Alg} to $(G, L)$. If the output is an $L$-coloring of G or a determination that G is not $L$-colorable, then we are done; so we may assume that the output is a $(G, L)$-profile $\mathcal{L}$. For each $(G, L^*) \in \mathcal{L}$, we let $X^{L^*}=\{v\in V(G):|L^*(v)|\geq 2\}$ as in Theorem \ref{H(k,l)Alg}. By Theorem \ref{updtL}, we may assume that $L^*(u) \cap L^*(v) = \emptyset$ for all $uv \in E(G)$ such that $|L^*(u)| = 1$. If $L^*(u) = \emptyset$ for some $u \in V(G)$, then $G$ has no $L^*$-coloring and we continue. Otherwise, since Theorem \ref{H(k,l)Alg} guarantees that $G[X^{L^*}]$ is chordal, and by Theorem \ref{thm:chordalL3Col}, we can check in polynomial time if $G[X^{L^*}]$ is $L^*$-colorable. If this returns a coloring $f$, then by Theorem \ref{updtL}, we obtain an $L$-coloring of $G$ as follows:
   		\begin{itemize}
   			\item for $x \in X^{L^*}$, let $c(x) = f(x)$;
   			\item for all other $x \in V(G)$, let $c(x)$ be the unique color in $L^*(x)$.
   		\end{itemize}
   		If there is no $(G, L^*) \in \mathcal{L}$ such that this returns a coloring of $G$, then, from the definition of a $(G, L)$-profile, it follow that $G$ is not $L$-colorable. This concludes the proof.
   	\end{proof}

	\section{\textsf{NP}-completeness results} \label{Sec-OrderedNP}
	Let us begin with giving the full list of all ordered graphs we will use. Let $U'=\{u_1, u_2, u_3, u_4, u_5\}$ and $U=U'\backslash\{u_5\}$, the ordering $\varphi':U'\rightarrow \mathbb{R}$ with $u_i\mapsto i$ for $i\in [5]$.
	\begin{itemize}
		\item Let $J_1=(U,\{u_1u_2,u_2u_3,u_3u_4\}, \varphi'|_U)$.
		\item Let $J_2=(U,\{u_1u_2,u_2u_4,u_3u_4\}, \varphi'|_U)$.
		\item Let $J_3=(U,\{u_1u_3,u_2u_3,u_2u_4\}, \varphi'|_U)$.
		\item Let $J_4=(U,\{u_1u_3,u_2u_4,u_3u_4\}, \varphi'|_U)$.
		\item Let $J_5=(U,\{u_1u_4,u_2u_3,u_2u_4\}, \varphi'|_U)$.
		\item Let $J_6=(U,\{u_1u_4,u_2u_3,u_3u_4\}, \varphi'|_U)$.
		\item Let $J_7=(U,\{u_1u_2,u_1u_4,u_3u_4\}, \varphi'|_U)$.
		\item Let $J_8=(U,\{u_1u_3,u_1u_4,u_2u_4\}, \varphi'|_U)$.
		\item Let $J_9=(U,\{u_1u_2,u_3u_4\}, \varphi'|_U)$.
		\item Let $J_{10}=(U,\{u_1u_2,u_1u_4\}, \varphi'|_U)$.
		\item Let $J_{11}=(U,\{u_1u_3,u_1u_4\}, \varphi'|_U)$.
		\item Let $J_{12}=(U,\{u_1u_2,u_2u_4\}, \varphi'|_U)$.
		\item Let $J_{13}=(U',\{u_1u_5,u_2u_3, u_3u_4\}, \varphi')$.
		\item Let $J_{14}=(U',\{u_1u_5,u_2u_3, u_2u_4\}, \varphi')$.
		\item Let $J_{15}=(U\backslash \{u_4\}, \{u_1u_2,u_2u_3\}, \varphi'|_{U\backslash \{u_4\}})$.
		\item Let $J_{16}=(U\backslash \{u_4\}, \{u_1u_2,u_1u_3\}, \varphi'|_{U\backslash \{u_4\}})$.
	\end{itemize}
	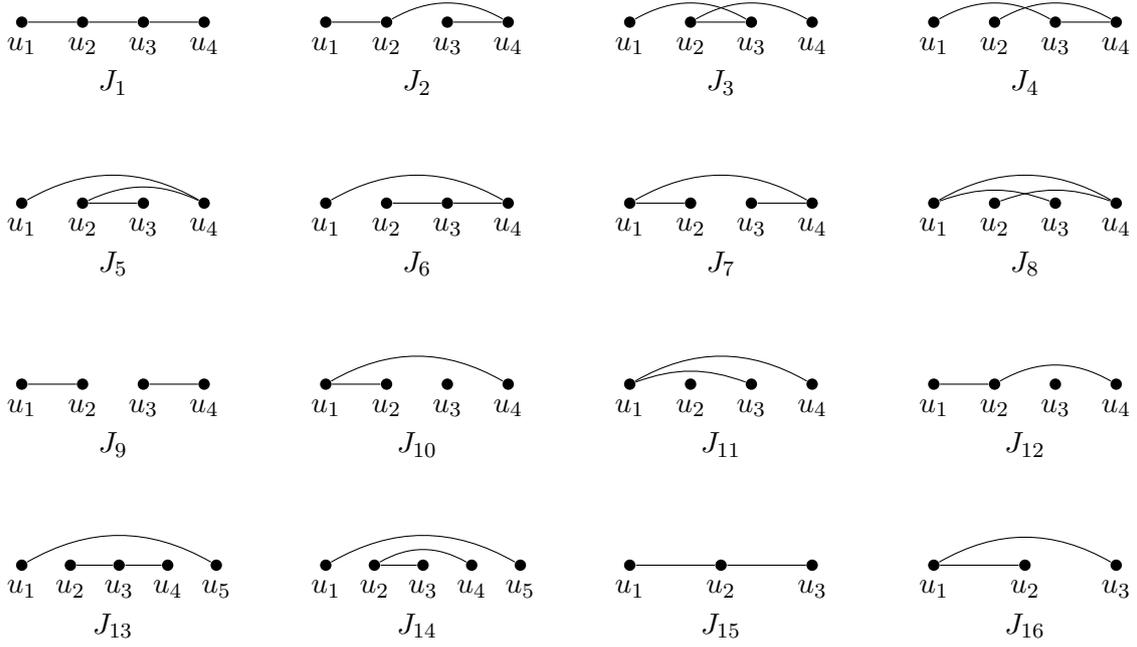
\begin{figure}[t]
		\centering 
					\begin{tikzpicture}[scale=0.8]
		\node [draw=none,fill=none] at (1.5,9){$J_1$};
		\node [label=below: $u_1$] (u1) at (0,10){};
		\node [label=below: $u_2$] (u2) at (1,10){};
		\node [label=below: $u_3$] (u3) at (2,10){};
		\node [label=below: $u_4$] (u4) at (3,10){};
		\draw (u1) edge [bend left = 0]  (u2);
		\draw (u2) edge [bend left = 0]  (u3);
		\draw (u3) edge [bend left = 0]  (u4);
		
		\node [draw=none,fill=none] at (6.5,9){$J_2$};
		\node [label=below: $u_1$] (u1) at (5,10){};
		\node [label=below: $u_2$] (u2) at (6,10){};
		\node [label=below: $u_3$] (u3) at (7,10){};
		\node [label=below: $u_4$] (u4) at (8,10){};
		\draw (u1) edge [bend left = 0]  (u2);
		\draw (u2) edge [bend left = 30]  (u4);
		\draw (u3) edge [bend left = 0]  (u4);
		
		\node [draw=none,fill=none] at (11.5,9){$J_3$};
		\node [label=below: $u_1$] (u1) at (10,10){};
		\node [label=below: $u_2$] (u2) at (11,10){};
		\node [label=below: $u_3$] (u3) at (12,10){};
		\node [label=below: $u_4$] (u4) at (13,10){};
		\draw (u1) edge [bend left = 30]  (u3);
		\draw (u2) edge [bend left = 0]  (u3);
		\draw (u2) edge [bend left = 30]  (u4);
		
		\node [draw=none,fill=none] at (16.5,9){$J_4$};
		\node [label=below: $u_1$] (u1) at (15,10){};
		\node [label=below: $u_2$] (u2) at (16,10){};
		\node [label=below: $u_3$] (u3) at (17,10){};
		\node [label=below: $u_4$] (u4) at (18,10){};
		\draw (u1) edge [bend left = 30]  (u3);
		\draw (u2) edge [bend left = 30]  (u4);
		\draw (u3) edge [bend left = 0]  (u4);
		
		\node [draw=none,fill=none] at (1.5,6){$J_5$};
		\node [label=below: $u_1$] (u1) at (0,7){};
		\node [label=below: $u_2$] (u2) at (1,7){};
		\node [label=below: $u_3$] (u3) at (2,7){};
		\node [label=below: $u_4$] (u4) at (3,7){};
		\draw (u1) edge [bend left = 30]  (u4);
		\draw (u2) edge [bend left = 0]  (u3);
		\draw (u2) edge [bend left = 25]  (u4);
		
		\node [draw=none,fill=none] at (6.5,6){$J_6$};
		\node [label=below: $u_1$] (u1) at (5,7){};
		\node [label=below: $u_2$] (u2) at (6,7){};
		\node [label=below: $u_3$] (u3) at (7,7){};
		\node [label=below: $u_4$] (u4) at (8,7){};
		\draw (u1) edge [bend left = 30]  (u4);
		\draw (u2) edge [bend left = 0]  (u3);
		\draw (u3) edge [bend left = 0]  (u4);
		
		\node [draw=none,fill=none] at (11.5,6){$J_7$};
		\node [label=below: $u_1$] (u1) at (10,7){};
		\node [label=below: $u_2$] (u2) at (11,7){};
		\node [label=below: $u_3$] (u3) at (12,7){};
		\node [label=below: $u_4$] (u4) at (13,7){};
		\draw (u1) edge [bend left = 0]  (u2);
		\draw (u1) edge [bend left = 30]  (u4);
		\draw (u3) edge [bend left = 0]  (u4);
		
		\node [draw=none,fill=none] at (16.5,6){$J_8$};
		\node [label=below: $u_1$] (u1) at (15,7){};
		\node [label=below: $u_2$] (u2) at (16,7){};
		\node [label=below: $u_3$] (u3) at (17,7){};
		\node [label=below: $u_4$] (u4) at (18,7){};
		\draw (u1) edge [bend left = 20]  (u3);
		\draw (u1) edge [bend left = 30]  (u4);
		\draw (u2) edge [bend left = 20]  (u4);
		
		\node [draw=none,fill=none] at (1.5,3){$J_9$};
		\node [label=below: $u_1$] (u1) at (0,4){};
		\node [label=below: $u_2$] (u2) at (1,4){};
		\node [label=below: $u_3$] (u3) at (2,4){};
		\node [label=below: $u_4$] (u4) at (3,4){};
		\draw (u1) edge [bend left = 0]  (u2);
		\draw (u3) edge [bend left = 0]  (u4);
		
		\node [draw=none,fill=none] at (6.5,3){$J_{10}$};
		\node [label=below: $u_1$] (u1) at (5,4){};
		\node [label=below: $u_2$] (u2) at (6,4){};
		\node [label=below: $u_3$] (u3) at (7,4){};
		\node [label=below: $u_4$] (u4) at (8,4){};
		\draw (u1) edge [bend left = 0]  (u2);
		\draw (u1) edge [bend left = 30]  (u4);
		
		\node [draw=none,fill=none] at (11.5,3){$J_{11}$};
		\node [label=below: $u_1$] (u1) at (10,4){};
		\node [label=below: $u_2$] (u2) at (11,4){};
		\node [label=below: $u_3$] (u3) at (12,4){};
		\node [label=below: $u_4$] (u4) at (13,4){};
		\draw (u1) edge [bend left = 20]  (u3);
		\draw (u1) edge [bend left = 30]  (u4);
		
		\node [draw=none,fill=none] at (16.5,3){$J_{12}$};
		\node [label=below: $u_1$] (u1) at (15,4){};
		\node [label=below: $u_2$] (u2) at (16,4){};
		\node [label=below: $u_3$] (u3) at (17,4){};
		\node [label=below: $u_4$] (u4) at (18,4){};
		\draw (u1) edge [bend left = 0]  (u2);
		\draw (u2) edge [bend left = 30]  (u4);
		
		\node [draw=none,fill=none] at (1.5,0){$J_{13}$};
		\node [label=below: $u_1$] (u1) at (0,1){};
		\node [label=below: $u_2$] (u2) at (0.8,1){};
		\node [label=below: $u_3$] (u3) at (1.6,1){};
		\node [label=below: $u_4$] (u4) at (2.4,1){};
		\node [label=below: $u_5$] (u5) at (3.2,1){};
		\draw (u1) edge [bend left = 30]  (u5);
		\draw (u2) edge [bend left = 0]  (u3);
		\draw (u3) edge [bend left = 0]  (u4);
		
		\node [draw=none,fill=none] at (6.5,0){$J_{14}$};
		\node [label=below: $u_1$] (u1) at (5,1){};
		\node [label=below: $u_2$] (u2) at (5.8,1){};
		\node [label=below: $u_3$] (u3) at (6.6,1){};
		\node [label=below: $u_4$] (u4) at (7.4,1){};
		\node [label=below: $u_5$] (u5) at (8.2,1){};
		\draw (u1) edge [bend left = 30]  (u5);
		\draw (u2) edge [bend left = 30]  (u4);
		\draw (u2) edge [bend left = 0]  (u3);
		
		\node [draw=none,fill=none] at (11.5,0){$J_{15}$};
		\node [label=below: $u_1$] (u1) at (10,1){};
		\node [label=below: $u_2$] (u2) at (11.5,1){};
		\node [label=below: $u_3$] (u3) at (13,1){};
		\draw (u1) edge [bend left = 0]  (u2);
		\draw (u2) edge [bend left = 0]  (u3);
		
		\node [draw=none,fill=none] at (16.5,0){$J_{16}$};
		\node [label=below: $u_1$] (u1) at (15,1){};
		\node [label=below: $u_2$] (u2) at (16.5,1){};
		\node [label=below: $u_3$] (u3) at (18,1){};
		\draw (u1) edge [bend left = 30]  (u3);
		\draw (u1) edge [bend left = 0]  (u2);
	\end{tikzpicture}
		
		\caption{The ordered graphs $J_i$ for $i\in [16]$.}\label{J}
	\end{figure}
	
	Let $V=\{v_1,v_2,v_3,v_4,v_5,v_6\}$ and $\varphi:V\rightarrow \mathbb{R}$ with $v_i\mapsto i$ for $i\in [6]$.
	\begin{itemize}
		\item Let $M_1=(V,\{v_1v_6,v_2v_5\},\varphi)$.
		\item Let $M_2=(V,\{v_1v_6,v_2v_5,v_3v_4\},\varphi)$.
		\item Let $M_3=(V,\{v_1v_4,v_2v_5,v_3v_6\},\varphi)$.
		\item Let $M_4=(V,\{v_1v_5,v_2v_4,v_3v_6\},\varphi)$.
		\item Let $M_5=(V\backslash \{v_6\},\{v_1v_5,v_2v_3\},\varphi|_{V\backslash \{v_6\}})$.
		\item Let $M_6=(V\backslash \{v_5,v_6\},\{v_1v_3,v_2v_4\},\varphi|_{V\backslash \{v_5,v_6\}})$.
		\item Let $M_7=(V\backslash \{v_5,v_6\},\{v_1v_4,v_2v_3\},\varphi|_{V\backslash \{v_5,v_6\}})$.
		\item Let $M_8=(V\backslash \{v_6\},\{v_1v_5,v_2v_4\},\varphi|_{V\backslash \{v_6\}})$.			
	\end{itemize}
	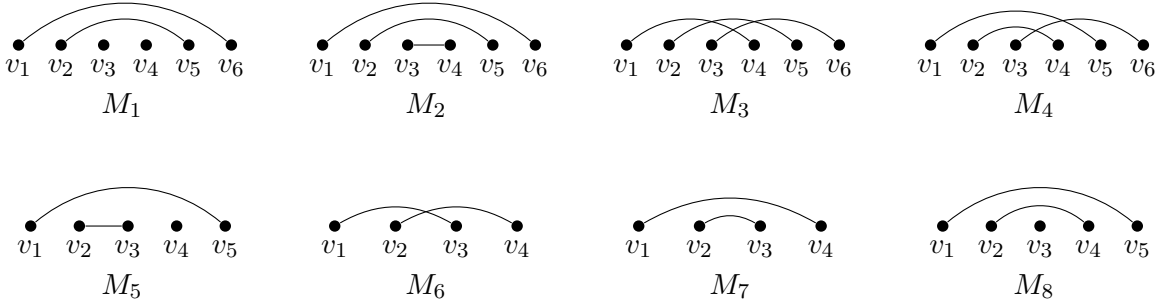
\begin{figure}[t]
		\centering 
				\begin{tikzpicture}[scale=0.8]
		
		\node [draw=none,fill=none] at (1.5,3){$M_1$};
		\node [label=below: $v_1$] (u1) at (-0.2,4){};
		\node [label=below: $v_2$] (u2) at (0.5,4){};
		\node [label=below: $v_3$] (u3) at (1.2,4){};
		\node [label=below: $v_4$] (u4) at (1.9,4){};
		\node [label=below: $v_5$] (u5) at (2.6,4){};
		\node [label=below: $v_6$] (u6) at (3.3,4){};
		\draw (u1) edge [bend left = 40]  (u6);
		\draw (u2) edge [bend left = 40]  (u5);
		
		\node [draw=none,fill=none] at (6.5,3){$M_2$};
		\node [label=below: $v_1$] (u1) at (4.8,4){};
		\node [label=below: $v_2$] (u2) at (5.5,4){};
		\node [label=below: $v_3$] (u3) at (6.2,4){};
		\node [label=below: $v_4$] (u4) at (6.9,4){};
		\node [label=below: $v_5$] (u5) at (7.6,4){};
		\node [label=below: $v_6$] (u6) at (8.3,4){};
		\draw (u1) edge [bend left = 40]  (u6);
		\draw (u2) edge [bend left = 40]  (u5);
		\draw (u3) edge [bend left = 0]  (u4);
		
		\node [draw=none,fill=none] at (11.5,3){$M_3$};
		\node [label=below: $v_1$] (u1) at (9.8,4){};
		\node [label=below: $v_2$] (u2) at (10.5,4){};
		\node [label=below: $v_3$] (u3) at (11.2,4){};
		\node [label=below: $v_4$] (u4) at (11.9,4){};
		\node [label=below: $v_5$] (u5) at (12.6,4){};
		\node [label=below: $v_6$] (u6) at (13.3,4){};
		\draw (u1) edge [bend left = 40]  (u4);
		\draw (u2) edge [bend left = 40]  (u5);
		\draw (u3) edge [bend left = 40]  (u6);
		
		\node [draw=none,fill=none] at (16.5,3){$M_4$};
		\node [label=below: $v_1$] (u1) at (14.8,4){};
		\node [label=below: $v_2$] (u2) at (15.5,4){};
		\node [label=below: $v_3$] (u3) at (16.2,4){};
		\node [label=below: $v_4$] (u4) at (16.9,4){};
		\node [label=below: $v_5$] (u5) at (17.6,4){};
		\node [label=below: $v_6$] (u6) at (18.3,4){};
		\draw (u1) edge [bend left = 40]  (u5);
		\draw (u2) edge [bend left = 40]  (u4);
		\draw (u3) edge [bend left = 40]  (u6);
		
		\node [draw=none,fill=none] at (1.5,0){$M_5$};
		\node [label=below: $v_1$] (u1) at (0,1){};
		\node [label=below: $v_2$] (u2) at (0.8,1){};
		\node [label=below: $v_3$] (u3) at (1.6,1){};
		\node [label=below: $v_4$] (u4) at (2.4,1){};
		\node [label=below: $v_5$] (u5) at (3.2,1){};
		\draw (u1) edge [bend left = 40]  (u5);
		\draw (u2) edge [bend left = 0]  (u3);
		
		\node [draw=none,fill=none] at (6.5,0){$M_6$};
		\node [label=below: $v_1$] (u1) at (5,1){};
		\node [label=below: $v_2$] (u2) at (6,1){};
		\node [label=below: $v_3$] (u3) at (7,1){};
		\node [label=below: $v_4$] (u4) at (8,1){};
		\draw (u1) edge [bend left = 30]  (u3);
		\draw (u2) edge [bend left = 30]  (u4);
		
		\node [draw=none,fill=none] at (11.5,0){$M_7$};
		\node [label=below: $v_1$] (u1) at (10,1){};
		\node [label=below: $v_2$] (u2) at (11,1){};
		\node [label=below: $v_3$] (u3) at (12,1){};
		\node [label=below: $v_4$] (u4) at (13,1){};
		\draw (u1) edge [bend left = 30]  (u4);
		\draw (u2) edge [bend left = 30]  (u3);
		
		\node [draw=none,fill=none] at (16.5,0){$M_8$};
		\node [label=below: $v_1$] (u1) at (15,1){};
		\node [label=below: $v_2$] (u2) at (15.8,1){};
		\node [label=below: $v_3$] (u3) at (16.6,1){};
		\node [label=below: $v_4$] (u4) at (17.4,1){};
		\node [label=below: $v_5$] (u5) at (18.2,1){};
		\draw (u1) edge [bend left = 40]  (u5);
		\draw (u2) edge [bend left = 40]  (u4);
	\end{tikzpicture}
		\caption{The ordered graphs $M_i$ for $i\in [8]$.}\label{M}
	\end{figure}

	The main theorem of this section is the following,  which implies Theorem \ref{OrderedNPatleast3}, the hardness assertion in Theorem \ref{shareanend}, and the entire Theorem \ref{OrderedNPdemo}.
	\begin{theorem}\label{OrderedNP}
		If $H$ is an ordered graph such that at least one of the following holds:
		\begin{itemize}
			\item $H$ has at least three edges;
			\item $H$ has a vertex of degree at least 2 and is not isomorphic to $J_{16}(k,l)$ or $-J_{16}(k,l)$ for any $k,l$;
			\item $H$ contains $J_9$, $M_1$ or $M_5$ as induced ordered subgraph;
		\end{itemize}
		then the \textsc{Ordered Graph List-3-Coloring Problem} restricted to $(H,\varphi)$-free ordered graphs is \textsf{NP}-complete.
	\end{theorem}

	In order to show Theorem \ref{OrderedNP}, we will show the following two theorems.
	\begin{theorem} \label{OrderedJ}
		The \textsc{Ordered Graph List-3-Coloring Problem} is \textsf{NP}-complete
		when restricted to the class of $J_j$-free ordered graphs, for $j\in [15]$.
	\end{theorem}
	
	\begin{theorem} \label{OrderedM}
		The \textsc{Ordered Graph List-3-Coloring Problem} is \textsf{NP}-complete
		when restricted to the class of $M_j$-free ordered graphs, for $j\in [5]$.
	\end{theorem}
	
	Moreover, as a consequence of Theorem \ref{OrderedJ}, we also have the following result, which will be proved later:
	\begin{theorem} \label{P4&P3+P2-NP}
		Let $H$ be a graph and $\varphi:V(H)\rightarrow \mathbb{Z}$. 
		The \textsc{Ordered Graph List-3-Coloring Problem} restricted to $(H,\varphi)$-free ordered graphs is \textsf{NP}-complete if $H$ contains a copy of $P_4$ or $P_3+P_2$ as an induced subgraph.
	\end{theorem}
	
	Before we start the proof, let us first point out the following observation:
	\begin{corollary} \label{NP-neg}
		If the \textsc{Ordered Graph List-3-Coloring Problem} restricted to $H$-free ordered graphs is \textsf{NP}-complete, then the \textsc{Ordered Graph List-3-Coloring Problem} restricted to $-H$-free ordered graphs is \textsf{NP}-complete.
	\end{corollary}
	
	\begin{proof}[Proof of Theorem \ref{OrderedNP} assuming Theorems \ref{OrderedJ}, \ref{OrderedM} and \ref{P4&P3+P2-NP}]
		If $H$ is a graph with at least three edges, then $H$ contains at least one of the following induced subgraph: a cycle, a claw, $P_4$, $P_3+P_2$ or $3P_2$. NP-hardness in the first two cases follow from Theorems \ref{thm:cycle} and \ref{thm:claw}, respectively. The third and fourth case follow from Theorem \ref{P4&P3+P2-NP}. Finally, if $H$ contains $3P_2$, the result follows from the cases of $J_9$ of Theorem \ref{OrderedJ} and $M_2$, $M_3$, $M_4$, $-M_4$ and $M_5$ of Theorem \ref{OrderedM}.
		
		If $H$ has a vertex of degree at least 2 and is not isomorphic to $J_{16}(k,l)$ or $-J_{16}(k,l)$ for all $k,l$, then either $H$ has at least three edges, or $H$ contains a $P_3$ and $H$ contains $J_{10}$, $-J_{10}$, $J_{11}$, $-J_{11}$, or $J_{15}$; now the result follows from Theorem \ref{OrderedJ}.
	\end{proof}
	
	We will use two constructions to show the \textsc{Ordered Graph List-3-Coloring Problem} is \textsf{NP}-complete when restricted to the class of $J_i$-free ordered graphs, for every $i\in [15]$. Then with these proofs, we will show Theorem \ref{P4&P3+P2-NP}.
	
	In the first construction, we will use the following result.
	
	\begin{theorem}[Chleb{\'i}k and Chleb{\'i}kov{\'a} \cite{L3ColBip-NP}] \label{BipL3COl}
		The \textsc{List-3-Coloring Problem} restricted to bipartite graphs is \textsf{NP}-complete.
	\end{theorem}
	
	\begin{theorem} \label{NP-Bip}
		The \textsc{Ordered Graph List-3-Coloring Problem} restricted to $J_1$, $J_2$, $J_4$, $J_6$, $J_7$, $J_9$, $J_{12}$, $J_{13}$ or $J_{15}$-free ordered graphs is \textsf{NP}-complete.
	\end{theorem}
	\begin{proof}
		Given a bipartite graph $G$ with bipartition $(X,Y)$ and its list assignment $L$, we construct an ordered graph $(G,\tau_5)$ as follows. We enumerate the set $X=\{x_1,\ldots,x_s\}$ and $Y=\{y_1,\ldots,y_t\}$. Let $\tau_5:V(G)\rightarrow \mathbb{Z}$ be a function with $\tau_5(x_i)=i$ for $i\in [s]$ and $\tau_5(y_j)=s+j$ for $j\in [t]$. 
		
		Clearly, the ordered graph $(G,\tau_5)$ can be computed in time $\mathcal{O}(n)$, and $(G,\tau_5)$ is list-3-colorable if and only if $G$ is list-3-colorable. The ordered graph $(G,\tau_5)$ is $J_7$-free, $J_9$-free and $J_{15}$-free, as for every edge $zw\in E(G)$, without loss of generality, we have $z\in X$ and $w\in Y$. The fact that $(G,\tau_5)$ is $J_{15}$-free implies that $(G, \tau_5)$ does not contain $J_1$, $J_2$, $J_4$, $J_6$, $J_{12}$ and $J_{13}$.
	\end{proof}
	
 The second construction is a reduction from NAE3SAT. Given an instance $I$ consisting of $n$ Boolean variables and $m$ clauses, each of which contains 3 literals, the \textsc{Not-All-Equal-3-Satisfiability Problem (NAE3SAT)} is to decide whether there exists a truth assignment for every variable such that every clause contains at least one true literal and one false literal. We say $I$ is \textit{satisfiable} if it admits such an assignment. \emph{Monotone} \textsc{NAE3SAT} is a \textsc{NAE3SAT} restricted to instances with no negated literals.
	\begin{theorem}[Garey and Johnson \cite{NAE-NP}]\label{NAENP}
		Monotone \textsc{NAE3SAT} is \textsf{NP}-complete.
	\end{theorem}
	
	Notice that we will prove a stronger result: In the following theorem, we actually prove the \textsf{NP}-completeness of the \textsc{Ordered Graph 3-Coloring Problem} instead of the \textsc{Ordered Graph List-3-Coloring Problem} within specific classes of graphs.
	
	\begin{theorem} \label{NP-XMT}
		Given a monotone NAE3SAT instance $I$, there is a graph $H_1$ such that:
		\begin{enumerate}
			\item The graph $H_1$ can be computed from $I$ in time $\mathcal{O}(m+n)$, where $m$ is the number of clauses of $I$ and $n$ is the number of variables of $I$;
			\item The graph $H_1$ is 3-colorable if and only if $I$ is satisfiable;
			\item There is an injective function $\tau_1:V(H_1)\rightarrow \mathbb{Z}$ such that $(H_1, \tau_1)$ is $J_3$ and $-J_{11}$-free, and $\tau_1$ can be computed from $H_1$ in time $\mathcal{O}(m+n)$;
			\item There is an injective function $\tau_2:V(H_1)\rightarrow \mathbb{Z}$ such that $(H_1, \tau_2)$ is $J_5$ and $J_8$-free, and $\tau_2$ can be computed from $H_1$ in time $\mathcal{O}(m+n)$;
			\item There is an injective function $\tau_3:V(H_1)\rightarrow \mathbb{Z}$ such that $(H_1, \tau_3)$ is $J_{10}$ and $J_{14}$-free, and $\tau_3$ can be computed from $H_1$ in time $\mathcal{O}(m+n)$.
		\end{enumerate}
		Therefore, the \textsc{Ordered Graph 3-Coloring Problem} is \textsf{NP}-complete when restricted to the class of $J_3$, $J_5$, $J_8$, $J_{10}$, $-J_{11}$ or $J_{14}$-free ordered graphs.
	\end{theorem}
	\begin{figure}[t]
		\centering 
			\begin{tikzpicture}[scale=0.8]
		\node [label=left: $x$] (x) at (1, 5){};
		\node [draw=none, fill=none] () at (4,1) {$M$};
		\node [label=right: $m_1$] (m1) at (4, 8){};
		\node  (m2) at (4, 7){};
		\node  (mi1) at (4, 6){};
		\node [draw=none, fill=none] () at (4.1,6.3) {$m_{i_1}$};
		\node  (m3) at (4, 5){};
		\node  (mi2) at (4, 4){};
		\node [draw=none, fill=none] () at (4.1,4.3) {$m_{i_2}$};
		\node  (mi3) at (4, 3){};
		\node [draw=none, fill=none] () at (4.1,3.3) {$m_{i_3}$};
		\node [label=right: $m_n$] (mn) at (4, 2){};
		\draw (x) -- (m1) ;
		\draw (x) -- (m2) ;
		\draw (x) -- (m3) ;
		\draw (x) -- (mi1) ;
		\draw (x) -- (mi2) ;
		\draw (x) -- (mi3) ;
		\draw (x) -- (mn) ;
		
		\node [draw=none, fill=none] () at (7.5,1) {$T$};
		\node [label=right: $t_{j,1}$] (t1) at (8, 6){};
		\node [label=above: $t_{j,2}$] (t2) at (7, 5){};
		\node [label=right: $t_{j,3}$] (t3) at (8, 4){};
		\draw (t1) -- (mi1) ;
		\draw (t2) -- (mi2) ;
		\draw (t3) -- (mi3) ;
		\draw (t1) -- (t2) ;
		\draw (t2) -- (t3) ;
		\draw (t3) -- (t1) ;
	\end{tikzpicture}
		\caption{Construction of $H_1$ from Theorem \ref{NP-XMT}, with $M$ corresponding to variables and $T$ corresponding to clauses. }\label{fig-XMT}
	\end{figure}
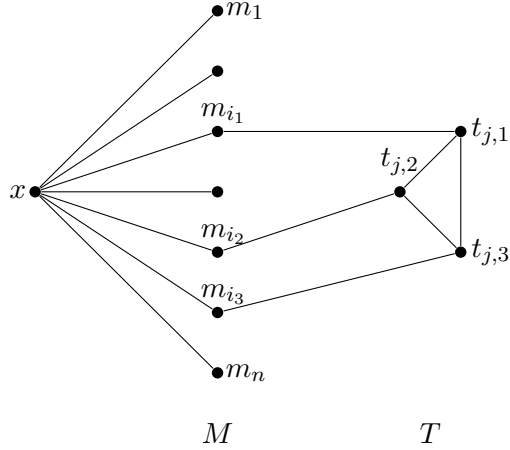
	\begin{proof}
		First we create a vertex $x$. For every variable $x_i$ of $I$, we create a vertex $m_i$, and denote the set of such vertices as $M$. For every clause $C_j$ of $I$, we create three vertices $t_{j,k}$ for $k\in [3]$, and denote the set of such vertices as $T$. Let $V(H_1)=\{x\}\cup M \cup T$.
		
		For every vertex $m_i\in M$, we add an edge $xm_i$. For every clause $C_j$ of $I$, if the variables in $C_j$ are $x_{i_1}, x_{i_2}, x_{i_3}$ with $1\leq i_1<i_2<i_3\leq n$, we add edges $m_{i_k}t_{j,k}$ for $k\in [3]$ and edges $t_{j,1}t_{j,2}$, $t_{j,2}t_{j,3}$, $t_{j,1}t_{j,3}$. Let $E(H_1)$ be the set of all defined edges. We deduced directly from the construction of $H_1$ that:
		
		\sta{ $H_1[M]$ is stable and $H_1[T]$ is disjoint union of triangles.} \label{XMT-helper}
		
		\sta{The graph $H_1$ can be computed from $I$ in time $\mathcal{O}(m+n)$.}
		
		Also, we have:
		
		\sta{The graph $H_1$ is 3-colorable if and only if $I$ is satisfiable.}
		
		Let $f:V(H_1)\rightarrow [3]$ be a 3-coloring of $H_1$. Without loss of generality, we assume $f(x)=1$. Since every vertex in $M$ is adjacent to $x$, we have $f(m_i)\in \{2,3\}$ for every $i\in [n]$. We claim that if the variables in $C_j$ are $x_{i_1}, x_{i_2}, x_{i_3}$ with $1\leq i_1<i_2<i_3\leq n$, then at least one of $f(m_{i_1})$, $f(m_{i_2})$ and $f(m_{i_3})$ has value 2 and at least one of them has value 3. Suppose for a contradiction, without loss of generality, that $f(m_{i_k})=2$ for every $k\in [3]$. Then $f(t_{j,k})\in \{1,3\}$ for every $k\in [3]$, at least two of the three vertices receive the same color. But by (\ref{XMT-helper}), the vertices $t_{j,1}$, $t_{j,2}$ and $t_{j,3}$ form a triangle. which leads to a contradiction as desired. Thus, by assigning true to the variable $x_i$ if $f(m_i)=2$ and false otherwise, we get a valid truth assignment to the monotone NAE3SAT instance $I$.
		
		If there is a valid truth assignment to $I$, we define a 3-coloring $g:V(H_1)\rightarrow [3]$ as follows. For every $i\in [n]$, let $g(m_i)=2$ if the variable $v_i$ is true in this truth assignment, otherwise let $g(m_i)=3$. Let $g(x)=1$. For each clause $C_j$, we denote the variables in $C_j$ as $x_{i_1}, x_{i_2}, x_{i_3}$ with $1\leq i_1<i_2<i_3\leq n$. Let $g(t_{j,1})\in \{2,3\}\backslash \{g(m_{i_1})\}$. Since $g|_M$ is constructed from a valid truth assignment of $I$, at least one of the $g(m_{i_1})$, $g(m_{i_2})$ and $g(m_{i_3})$ has value 2 and at least one of them has value 3. If $g(m_{i_2})\neq g(m_{i_1})$, then let $g(t_{j,2})\in \{2,3\}\backslash \{g(m_{i_2})\}$ and $g(t_{j,3})=1$, otherwise let $g(t_{j,3})\in \{2,3\}\backslash \{g(m_{i_3})\}$ and $g(t_{j,2})=1$. To verify this is a valid 3-coloring, we simply go through and check every edge in $E(H_1)$.				
		
		\tbox{There is an injective function $\tau_1:V(H_1)\rightarrow \mathbb{Z}$ such that $(H_1, \tau_1)$ is $J_3$ and $-J_{11}$-free, and $\tau_1$ can be computed from $H_1$ in time $\mathcal{O}(m+n)$.}
		
		The function $\tau_1:V(H_1)\rightarrow \mathbb{Z}$ is defined as follows. Let $\tau_1(x)=1$. Let $\tau_1(m_i)=i+1$ for every $i\in[n]$. For every $j\in[m]$ and $k\in [3]$, let $\tau_1(t_{j,k})=n+3j+k-2$. The function $\tau_1$ can be constructed in time $\mathcal{O}(m+n)$ as we go through every vertex once.
		
		From the construction we have $\tau_1|M$ and $\tau_1|T$ are injective, and  $\tau_1(x)<\tau_1(m_i)<\tau_1(t_{j,k})$ for every $i\in[n]$, $j\in[m]$ and $k\in [3]$. So the function $\tau_1$ is injective.
		
		Suppose for a contradiction that $(H_1,\tau_1)$ contains an induced path $w_1w_3w_2w_4$ with $\tau_1(w_1) < \tau_1(w_2) < \tau_1(w_3) < \tau_1(w_4)$. Since the vertex $x$ does not have any backward neighbor and every $m_i\in M$ has only one backward neighbor $x$, we have $w_3\in T$. At most one of $w_1$ and $w_2$ is in $T$ as $w_1,w_2\in N(w_3)$ and $w_1w_2\notin E(H_1)$. Thus we have $w_1\in M$, $w_2\in T\cup M$ and $w_4\in T$. If $w_2\in T$, then $w_3$ is also adjacent to $w_4$ since $w_3,w_4\in N(w_2)$, which is a contradiction. If $w_2\in M$, then $w_3$ has two neighbors in $M$, which is a contradiction. Thus, we have proved $(H_1,\tau_1)$ is $J_3$-free.	
		
		
		Suppose $(H_1,\tau_1)$ contains an induced subgraph $(\{w_1,w_2,w_3,w_4\}, \{w_1w_4,w_2w_4\})$ with $\tau_1(w_1) <\tau_1(w_2) <\tau_1(w_3) <\tau_1(w_4)$. Since $w_4$ has two backward neighbors, we have $w_4\in T$. For the two backward neighbors $w_1,w_2$ of $w_4$, since $w_1w_2\notin E(H_1)$ and $\tau_1(w_1)<\tau_1(w_2)$, we have $w_1\in M$ and $w_2\in T$. From the construction of $\tau_1$, since $w_2w_4\in E(H_1)$ and $\tau_1(w_2)<\tau_1(w_3)<\tau_1(w_4)$, we have $w_3\in T$ and $w_2w_3, w_3w_4\in E(H_1)$, which is a contradiction. Thus, we have proved $(H_1,\tau_1)$ is $-J_{11}$-free.			
		
		\tbox{There is an injective function $\tau_2:V(H_1)\rightarrow \mathbb{Z}$ such that $(H_1, \tau_2)$ is $J_5$ and $J_8$-free, and $\tau_2$ can be computed from $I$ in time $\mathcal{O}(m+n)$.}
		
		The function $\tau_2:V(H_1)\rightarrow \mathbb{Z}$ is defined as follows. Let $\tau_2(m_i)=i$ for every $i\in[n]$. Let $\tau_2(x)=n+1$. For every $j\in[m]$ and $k\in [3]$, let $\tau_2(t_{j,k})=n+3j+k-2$. The function $\tau_2$ can be constructed in time $\mathcal{O}(m+n)$ as we go through every vertex once.
		
		From the construction we have $\tau_2|M$ and $\tau_2|T$ are injective, and  $\tau_2(m_i)<\tau_2(x)<\tau_2(t_{j,k})$ for every $i\in[n]$, $j\in[m]$ and $k\in [3]$. So the function $\tau_2$ is injective.

		Suppose for a contradiction that $(H_1,\tau_2)$ contains an induced path $w_1w_4w_2w_3$ with $\tau_2(w_1) < \tau_2(w_2) < \tau_2(w_3) < \tau_2(w_4)$. Since the vertex $x$ does not have any forward neighbor, we have $w_1,w_2\neq x$. Since every vertex in $M$ has no backward neighbor, we have $w_3, w_4\notin M$. If $w_2\in T$, then we have $w_3,w_4\in T$ as $\tau_2(w_2)< \tau_2(w_3)< \tau_2(w_4)$. But from the construction of $H_1$ and $\tau_2$, we also have $w_3w_4\in E(H_1)$ as $w_2w_4\in E(H_1)$, which is a contradiction. Thus we have $w_2\in M$, which implies $w_1\in M$. Also since $w_2$ has two forward neighbors and $\tau_2(x)<\tau_2(t_{j,k})$ for every $j\in[m]$ and $k\in [3]$, we have $w_4\in T$. But then $w_4$ has two neighbors $w_1,w_2\in M$, which is a contradiction. Thus, we have proved $(H_1,\tau_2)$ is $J_5$-free.	
		
		Suppose for a contradiction that $(H_1,\tau_2)$ contains an induced path $w_3w_1w_4w_2$ with $\tau_2(w_1) < \tau_2(w_2) < \tau_2(w_3) < \tau_2(w_4)$. Since the vertex $x$ does not have any forward neighbor, we have $w_1,w_2\neq x$. Since every vertex in $M$ has no backward neighbor, we have $w_3, w_4\notin M$. Since $w_1$ has two nonadjacent forward neighbors $w_3,w_4$ with $\tau_2(w_3)<\tau_2(w_4)$, we have $w_1\in M$ and $w_4\in T$. Thus we have $w_2\in T$, as $w_4$ has exactly one neighbor in $M$. But from the construction of $H_1$ and $\tau_2$, we also have $w_3\in T$ and so $w_2w_3, w_3w_4\in E(H_1)$ as $w_2w_4\in E(H_1)$ and $\tau_2(w_2) < \tau_2(w_3) < \tau_2(w_4)$, which is a contradiction. Thus, we have proved $(H_1,\tau_2)$ is $J_8$-free.
		
		
		\tbox{There is an injective function $\tau_3:V(H_1)\rightarrow \mathbb{Z}$ such that $(H_1, \tau_3)$ is $J_{10}$ and $J_{14}$-free, and $\tau_3$ can be computed from $H_1$ in time $\mathcal{O}(m+n)$.}
		
		The function $\tau_3:V(H_1)\rightarrow \mathbb{Z}$ is defined as follows. Let $\tau_3(x)=1$. Let $\tau_1(m_i)=i+1$ for every $i\in[n]$. For every $j\in[m]$ and $k\in [3]$, let $m_i\in M$ be the vertex such that $t_{j,k}\in N(m_i)$, then we set $\tau_3(t_{j,k})=n+2+ \sum_{i'=1}^{i-1} (deg(m_{i'})-1) +|\{t_{j',k'}\in N(m_i): j'<j\}|$. The function $\tau_1$ can be constructed in time $\mathcal{O}(m+n)$ as we go through every vertex once.
		
		From the construction we have $\tau_3|M$ and $\tau_3|T$ are injective, and  $\tau_3(x)<\tau_3(m_i)<\tau_3(t_{j,k})$ for every $i\in[n]$, $j\in[m]$ and $k\in [3]$. So the function $\tau_3$ is injective.
		
		Suppose that $(H_1,\tau_3)$ contains an induced subgraph $(\{w_1,w_2,w_3,w_4\}, \{w_1w_2,w_1w_4\})$ with $\tau_3(w_1) < \tau_3(w_2) < \tau_3(w_3) < \tau_3(w_4)$. Now we consider the vertex $w_1$. Since $w_1=x$ implies $w_1w_3\in E(H_1)$ as $w_2,w_4\in N(w_1)$, and $w_1\in T$ implies $w_2w_4\in E(H_1)$, we have $w_1\in M$. But from the construction of $\tau_3$, we also have $w_1w_3\in E(H_1)$, which is a contradiction. Thus, we have proved $(H_1,\tau_2)$ is $J_{10}$-free.
		
		Suppose that $(H_1,\tau_3)$ contains an induced subgraph $(\{w_1,w_2,w_3,w_4,w_5\}, \{w_1w_5, w_2w_3, w_2w_4\})$ with $\tau_3(w_1) < \tau_3(w_2) < \tau_3(w_3) < \tau_3(w_4) < \tau_3(w_5)$. Now we consider the vertex $w_1$. If $w_1=x$, then $w_1w_2\in E(H_1)$ as $w_5\in N(w_1)$, a contradiction. If $w_1\in T$, then $w_2, w_3, w_4\in T$, which implies $w_3w_4\in E(H_1)$, again a contradiction. So we have $w_1\in M$. But from the construction of $\tau_3$, 
		if $w_2\in M$, we have $\tau_3(w_3)>\tau_3(w_5)$, which is a contradiction. If $w_2\in T$, then we have $w_3w_4\in E(H_1)$ as a contradiction. Thus, we have proved $(H_1,\tau_2)$ is $J_{14}$-free.
	\end{proof}
	\begin{remark}
		We can verify that the ordered graph $(H_1,\tau_1)$ is $J_6$-free and the ordered graph $(H_1,\tau_2)$ is $J_1$, $J_2$, $-J_4$ and $J_{12}$-free. So the \textsc{Ordered Graph 3-Coloring Problem} is also \textsf{NP}-complete when restricted to the class of $J_1$, $J_2$, $-J_4$, $J_6$ or $J_{12}$-free ordered graphs.
	\end{remark}	
	
	\begin{remark}
		The following theorem gives another proof that the \textsc{Ordered Graph 3-Coloring Problem} is also \textsf{NP}-complete when restricted to the class of $J_7$, $J_{13}$ or $J_{14}$-free ordered graphs. Therefore, the only two cases that we do not know whether  the \textsc{Ordered Graph 3-Coloring Problem} restricted to the class of $J_j$-free ordered graphs is \textsf{NP}-complete are $J_9$ and $J_{15}$.
	\end{remark}
	
	\begin{theorem} \label{NP-XMST}
		Given a monotone NAE3SAT instance $I$, there is an ordered graph $(H_2,\tau_4)$ such that
		\begin{enumerate}
			\item The ordered graph $(H_2,\tau_4)$ can be computed from $I$ in time $\mathcal{O}(m+n)$;
			\item The graph $H_2$ is 3-colorable if and only if $I$ is satisfiable;
			\item The ordered graph $(H_2,\tau_4)$ is $J_7$, $J_{13}$ and $J_{14}$-free.
		\end{enumerate}
		Therefore, the \textsc{Ordered Graph 3-Coloring Problem} is \textsf{NP}-complete when restricted to the class of $J_7$, $J_{13}$ or $J_{14}$-free ordered graphs.
	\end{theorem}
	\begin{figure}[t]
		\centering 
			\begin{tikzpicture}[scale=0.8]
		\node [label=left: $x$] (x) at (1, 5){};
		\node [draw=none, fill=none] () at (4,1) {$M$};
		\node [label=right: $m_1$] (m1) at (4, 8){};
		\node  (m2) at (4, 7){};
		\node  (mi1) at (4, 6){};
		\node [draw=none, fill=none] () at (4.1,6.3) {$m_{i_1}$};
		\node  (m3) at (4, 5){};
		\node  (mi2) at (4, 4){};
		\node [draw=none, fill=none] () at (4.1,4.3) {$m_{i_2}$};
		\node  (mi3) at (4, 3){};
		\node [draw=none, fill=none] () at (4.1,3.3) {$m_{i_3}$};
		\node [label=right: $m_n$] (mn) at (4, 2){};
		\draw (x) -- (m1) ;
		\draw (x) -- (m2) ;
		\draw (x) -- (m3) ;
		\draw (x) -- (mi1) ;
		\draw (x) -- (mi2) ;
		\draw (x) -- (mi3) ;
		\draw (x) -- (mn) ;
		
		\node [draw=none, fill=none] () at (5.5,1) {$S$};
		\node  (s11) at (5.5, 6.5){};
		\node  (s12) at (5.5, 6){};
		\node  (s13) at (5.5, 5.5){};
		\draw (s11) -- (mi1) ;
		\draw (s12) -- (mi1) ;
		\draw (s13) -- (mi1) ;
		\node [draw=none, fill=none] () at (5.5,6.8) {$s_{i_1,j}$};
		
		\node  (s21) at (5.5, 4.5){};
		\node  (s22) at (5.5, 4){};
		\draw (s21) -- (mi2) ;
		\draw (s22) -- (mi2) ;
		\node [draw=none, fill=none] () at (5.5,4.8) {$s_{i_2,j}$};
		
		\node  (s31) at (5.5, 3.5){};
		\node  (s32) at (5.5, 3){};
		\node  (s33) at (5.5, 2.5){};
		\draw (s31) -- (mi3) ;
		\draw (s32) -- (mi3) ;
		\draw (s33) -- (mi3) ;
		\node [draw=none, fill=none] () at (6,2.8) {$s_{i_3,j}$};
		
		\node [draw=none, fill=none] () at (7.5,1) {$T$};		
		\node [label=right: $t_{j,i_1}$] (t1) at (8, 6){};
		\node [label=above: $t_{j,i_2}$] (t2) at (7, 5){};
		\node [label=right: $t_{j,i_3}$] (t3) at (8, 4){};
		\draw (t1) -- (s11) ;
		\draw (t2) -- (s21) ;
		\draw (t3) -- (s32) ;
		\draw (t1) -- (t2) ;
		\draw (t2) -- (t3) ;
		\draw (t3) -- (t1) ;
	\end{tikzpicture}
		\caption{Construction of $H_2$ from Theorem \ref{NP-XMST} omitting edges between $x$ and all vertices in $S$. }\label{fig-XMST}
	\end{figure}
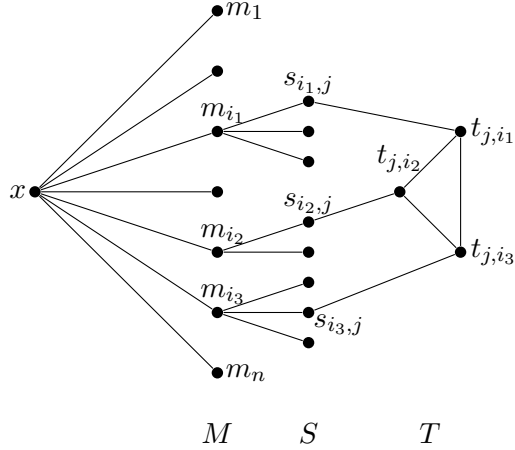
	\begin{proof}
		First we create a vertex $x$. For every variable $x_i$ of $I$, we create a vertex $m_i$ and add edge $xm_i$. We denote the set of such vertices $m_i$ as $M$. For every clause $C_j$ of $I$, if the variables in $C_j$ are $x_{i_1}, x_{i_2}, x_{i_3}$ with $1\leq i_1<i_2<i_3\leq n$, we create six vertices $t_{j,i_k}$ and $s_{i_k,j}$ for $k\in [3]$. We add edges $t_{j,i_k}t_{j,i_{k'}}$ for $\{k,k'\}\subseteq [3]$, and $xs_{i_k,j}$, $m_{i_k}s_{i_k,j}$ and $s_{i_k,j}t_{j,i_k}$ for $k\in [3]$. We denote the set of vertices $t_{j,i_k}$ as $T$, and the set of vertices $s_{i_k,j}$ as $S$. Finally, let $V(H_2)=\{x\}\cup M\cup S \cup T$ and $E(H_2)$ be the set of all edges defined above.
		
		The function $\tau_4:V(H_2)\rightarrow \mathbb{Z}$ is defined as follows. Let $\tau_4(x)=1$ and $\tau_1(m_i)=i+1$ for every $i\in[n]$. For every $s_{i,j}\in S$, let $\tau_4(s_{i,j})=n+2+ \sum_{i'=1}^{i-1} (deg(m_{i'})-1) +|\{s_{i,j'}\in N(m_i): j'<j\}|$. For every $t_{j,i}\in T$, let $\tau_4(t_{j,i})= \tau_4(s_{i,j})+ 3m$.

		\sta{The ordered graph $(H_2,\tau_4)$ can be computed from $I$ in time $\mathcal{O}(m+n)$.}
		
		It takes time $\mathcal{O}(m+n)$ to compute both the set $V(H_2)$ and $E(H_2)$. The function $\tau_4$ can be constructed in time $\mathcal{O}(m+n)$, since $|V(H_2)|=n+6m+1$ and we go through every vertex once.
		
		\tbox{The graph $H_2$ is 3-colorable if and only if $I$ is satisfiable.}
		
		Let $f:V(H_2)\rightarrow [3]$ be a 3-coloring of $H_2$. Without loss of generality, we assume $f(x)=1$. Since every vertex in $M\cup S$ is adjacent to $x$, we have $f(y)\in \{2,3\}$ for every $y\in M\cup S$. 
		
		We claim that if the variables in $C_j$ are $x_{i_1}, x_{i_2}, x_{i_3}$ with $1\leq i_1<i_2<i_3\leq n$, then at least one of $f(m_{i_1})$, $f(m_{i_2})$ and $f(m_{i_3})$ has value 2 and at least one of them has value 3. Suppose for a contradiction, without loss of generality, that $f(m_{i_k})=2$ for every $k\in [3]$. We have $f(s_{i_k,j})=3$ for every $k\in [3]$. Then $f(t_{j,i_k})\in \{1,2\}$ for every $k\in [3]$, at least two of the three vertices receive the same color. But from the construction, the vertices $t_{j,i_1}$, $t_{j,i_2}$ and $t_{j,i_3}$ form a triangle, which leads to a contradiction as desired. Thus, by assigning true to the variable $x_i$ if $f(m_i)=2$ and false otherwise, we get a valid truth assignment to the monotone NAE3SAT instance $I$.
		
		If there is a valid truth assignment to $I$, we define a 3-coloring $g:V(H_2)\rightarrow [3]$ as follows. For every $i\in [n]$, let $g(m_i)=2$ if the variable $v_i$ is true in this truth assignment, otherwise let $g(m_i)=3$. Let $g(x)=1$. For every vertex $s_{i,j}\in S$, we define $g(s_{i,j})\in \{2,3\}\backslash \{ g(m_{i})\}$.
		
		For each clause $C_j$, we denote the variables in $C_j$ as $x_{i_1}, x_{i_2}, x_{i_3}$ with $1\leq i_1<i_2<i_3\leq n$. Let $g(t_{j,i_1})\in \{2,3\}\backslash \{g(s_{i_1,j})\}$. Since $g|_M$ is constructed from a valid truth assignment of $I$, at least one of the $g(m_{i_1})$, $g(m_{i_2})$ and $g(m_{i_3})$ has value 2 and at least one of them has value 3. If $g(m_{i_2})\neq g(m_{i_1})$, then let $g(t_{j,2})\in \{2,3\}\backslash \{g(s_{i_2,j})\}$ and $g(t_{j,3})=1$, otherwise let $g(t_{j,3})\in \{2,3\}\backslash \{g(s_{i_3,j})\}$ and $g(t_{j,2})=1$. 
		
		To verify this is a valid 3-coloring, we simply go through every edge $yz$ in $E(H_1)$. If without loss of generality $y=x$ and $z\in M\cup S$, then $g(y)=1$ and $g(z)\in \{2,3\}$. So $g(y)\neq g(z)$. If $y\in M$ and $z\in S$, then from the construction $g(z)\in \{2,3\}\backslash \{ g(y)\}$, so $g(z)\neq g(y)$. If $y\in S$ and $z\in T$, then from the construction either $g(z)=1$ or $g(z)\in \{2,3\}\backslash \{ g(y)\}$, so $g(y)\neq g(z)$. If $y,z\in T$, we have $g(y)\neq g(z)$ as we use all three colors to color the vertices whose corresponding variables are in the same clause.
		
		\tbox{The ordered graph $(H_2,\tau_4)$ is $J_7$, $J_{13}$ and $J_{14}$-free.}
		
		Suppose for a contradiction that $(H_2,\tau_4)$ contains an induced path $w_2w_1w_4w_3$ with $\tau_4(w_1) < \tau_4(w_2) < \tau_4(w_3) < \tau_4(w_4)$. Now we consider the vertex $w_1$. Since $w_1$ has two nonadjacent forward neighbors, we know that $w_1\notin S\cup T$. If $w_1=x$, then $w_2,w_4\in M\cup S$. But from the construction of $\tau_4$, we also have $w_3\in M\cup S$, which implies $w_3\in S$ and so $w_1w_3\in E(H_2)$ as a contradiction. If $w_1\in M$, then $w_2,w_4\in S$, which implies $w_1w_3\in E(H_2)$ as a contradiction. Thus, we have proved $(H_2,\tau_4)$ is $J_{7}$-free.
		
		Suppose $(H_2,\tau_4)$ contains an induced subgraph $(\{w_1,w_2,w_3,w_4, w_5\}, \{w_1w_5,w_2w_3, w_3w_4\})$ with $\tau_4(w_1) < \tau_4(w_2) < \tau_4(w_3) < \tau_4(w_4) < \tau_4(w_5)$. Now we consider the vertex $w_1$. Since $w_1w_5\in E(H_2)$ and $w_1w_2\notin E(H_2)$, we have $w_1\notin \{x\}$. If $w_1\in M$, we have $w_5\in S$, which causes $w_2\in M$ and $w_3\in S$. But then $w_4$ has no place to go, which is a contradiction. If $w_1\in T$, we know that $w_2,w_3,w_4\in T$. But then from the construction of $H_2$ and $\tau_4$, we have $w_2w_4\in E(H_2)$, which is a contradiction. If $w_1\in S$, then $w_5\in T$. Since $w_2w_4\notin E(H_2)$, at least one of $w_2,w_3,w_4$ is in $S$. From the construction of $\tau_4$, we know that $w_2\in S$. So the forward neighbor $w_3$ of $w_2$ is in $T$. But from the construction of $\tau_4$, the inequality $\tau_4(w_1)<\tau_4(w_2)$ implies $\tau_4(w_5)<\tau_4(w_3)$, which is a contradiction. Thus, we have proved $(H_2,\tau_4)$ is $J_{13}$-free. 
		
		A similar argument holds for the case that $(H_2,\tau_4)$ is $J_{14}$-free.
		
	\end{proof}
	
	Theorem \ref{NP-Bip}, \ref{NP-XMT} and Corollary \ref{NP-neg} together give us Theorem \ref{OrderedJ}. Now we are ready to prove Theorem \ref{P4&P3+P2-NP}. 
	\begin{proof}[Proof of Theorem \ref{P4&P3+P2-NP}]
		Let $(H,\varphi)$ be an ordered graph. If $H$ contains a copy of $P_4$, say $Q_1$, then since $(Q_1, \varphi|_{Q_1})\in \{J_i:i\in[8]\}\cup \{-J_i:i\in[8]\}$, by Theorems \ref{NP-Bip}, \ref{NP-XMT} and Corollary \ref{NP-neg}, we have the \textsc{Ordered Graph List-3-Coloring Problem} restricted $(Q_1, \varphi|_{Q_1})$-free ordered graphs is \textsf{NP}-complete.
		
		If $H$ contains a copy of $P_3+P_2$, we denote it $Q_2=(\{v_1, v_2, v_3,v_4, v_5\}, \{v_1v_2, v_2v_3, v_4v_5\})$. By symmetry, we assume without loss of generality that $\varphi(v_4)<\varphi(v_5)$. Then we consider $\min \{\varphi(v_1), \varphi(v_2), \varphi(v_3)\}$ and $\max \{\varphi(v_1), \varphi(v_2), \varphi(v_3)\}$. If $\max \{\varphi(v_1), \varphi(v_2), \varphi(v_3)\} <\varphi(v_4)$ or $\min \{\varphi(v_1), \varphi(v_2), \varphi(v_3)\}> \varphi(v_5)$, then Theorem \ref{NP-Bip} indicates the \textsc{Ordered Graph List-3-Coloring Problem} restricted $(Q_2, \varphi|_{Q_2})$-free ordered graphs is \textsf{NP}-complete.
		
		If $\varphi(v_4)< \min \{\varphi(v_1), \varphi(v_2), \varphi(v_3)\}$ and $\max \{\varphi(v_1), \varphi(v_2), \varphi(v_3)\}< \varphi(v_5)$, then $Q_2$ either contains a copy of $J_{13}$ or $-J_{13}$ as induced subgraph, or a copy of $J_{14}$ or $-J_{14}$. By Theorems \ref{NP-Bip} and \ref{NP-XMT}, respectively, the \textsc{Ordered Graph List-3-Coloring Problem} restricted $(Q_2, \varphi|_{Q_2})$-free ordered graphs is \textsf{NP}-complete.
		
		If $\min \{\varphi(v_1), \varphi(v_2), \varphi(v_3)\}< \varphi(v_i)< \max \{\varphi(v_1), \varphi(v_2), \varphi(v_3)\}$ for some $i\in\{4,5\}$, then $Q_2$ either contains a copy of $J_{10}$ as induced subgraph, or a copy of $J_{11}$, or a copy of $J_{12}$, or $-J_{10}$, $-J_{11}$, $-J_{12}$. By Theorem \ref{NP-Bip}, \ref{NP-XMT} and Corollary \ref{NP-neg}, the \textsc{Ordered Graph List-3-Coloring Problem} restricted $(Q_2, \varphi|_{Q_2})$-free ordered graphs is \textsf{NP}-complete.
	\end{proof}
	
	Before we start proving Theorem \ref{OrderedM}, let us prove the following lemma:
	
	\begin{lemma} \label{Realization}
		Given a graph $G$, let $H$ be a graph satisfying the following conditions:
		\begin{enumerate}
			\item $V(G)\subseteq V(H)$.
			\item For every edge $uv\in E(G)$, we have $uv\notin E(H)$ and there are three vertex disjoint $uv$-paths $P_1^{uv}$, $P_2^{uv}$ and $P_3^{uv}$ in $H$ of length at least 3. Moreover, for all edges $uv, st\in E(G)$ and $i,j\in [3]$, we have $V(P_i^{uv})\cap V(P_j^{st})=\{u,v\}\cap \{s,t\}$.
			\item The correspondence between every edge $uv\in E(G)$ and its paths $P_1^{uv}$, $P_2^{uv}$ and $P_3^{uv}$ in $H$ is given.
			\item Every edge in $H$ is contained in some $P_i^{e}$, for $i\in [3]$ and $e\in E(G)$.
		\end{enumerate}
		Then there is a list assignment $L:V(H)\rightarrow 2^{[3]}$ such that:
		\begin{enumerate}
			\item The list assignment $L$ can be computed from $H$ in time $\mathcal{O}(|E(H)|)$. 
			\item For every vertex $u\in V(G)$, we have $L(u)=[3]$.
			\item For every $L$-coloring $f$ of $H$, the function $f|_{V(G)}$ is a 3-coloring of $G$.
			\item For every 3-coloring $g$ of $G$, there is a corresponding $L$-coloring $g'$ of $H$ with $g'|_{V(G)}=g$.
		\end{enumerate}
	\end{lemma}	
	Note: \begin{itemize}
		\item We say a pair $(H,L)$ as in Lemma \ref{Realization} a \textit{realization} of $G$.
		\item As the \textsc{3-Coloring Problem} is \textsf{NP}-complete, to decide whether $H$ is $L$-colorable is also \textsf{NP}-complete. 
	\end{itemize}
	\begin{figure}[t]
		\centering 
			\begin{tikzpicture}[scale=1]
		\node [label=left: $u$] (u) at (0, 1.5){};
		\node [draw=none, fill=none] () at (-0.3,2.1) {$\{1,2,3\}$};
		\node [label=right: $v$] (v) at (9, 1.5){};
		\node [draw=none, fill=none] () at (9.3,2.1) {$\{1,2,3\}$};
		
		\node [label=above: {$\{1,2\}$}] (w1) at (1.5,3){};
		\node [label=above: {$\{2,3\}$}] (w2) at (3,3){};
		\node [label=above: {$\{3,1\}$}] (w3) at (4.5,3){};
		\node [label=above: {$\{1,2\}$}] (w4) at (6,3){};
		\node [label=above: {$\{1,2\}$}] (w5) at (7.5,3){};
		\draw (u) -- (w1) ;
		\draw (w1) -- (w2) ;
		\draw (w2) -- (w3) ;
		\draw (w3) -- (w4) ;
		\draw (w4) -- (w5) ;
		\draw (w5) -- (v) ;
		
		\node [label=above: {$\{2,3\}$}] (w1) at (1.5,1.5){};
		\node [label=above: {$\{3,1\}$}] (w2) at (3,1.5){};
		\node [label=above: {$\{1,2\}$}] (w3) at (4.5,1.5){};
		\node [label=above: {$\{2,3\}$}] (w4) at (6,1.5){};
		\node [label=above: {$\{2,3\}$}] (w5) at (7.5,1.5){};
		\draw (u) -- (w1) ;
		\draw (w1) -- (w2) ;
		\draw (w2) -- (w3) ;
		\draw (w3) -- (w4) ;
		\draw (w4) -- (w5) ;
		\draw (w5) -- (v) ;
		
		\node [label=below: {$\{3,1\}$}] (w1) at (1.5,0){};
		\node [label=below: {$\{3,1\}$}] (w2) at (3.5,0){};
		\node [label=below: {$\{3,1\}$}] (w3) at (5.5,0){};
		\node [label=below: {$\{3,1\}$}] (w4) at (7.5,0){};
		\draw (u) -- (w1) ;
		\draw (w1) -- (w2) ;
		\draw (w2) -- (w3) ;
		\draw (w3) -- (w4) ;
		\draw (w4) -- (v) ;
		
		\node [label=left: $u$] (u) at (-4, 1.5){};
		\node [label=right: $v$] (v) at (-3, 1.5){};
		\draw (u) -- (v) ;
		\node [draw=none, fill=none] () at (-3.5,0.5) {$G$};
		\node [draw=none, fill=none] () at (4.5,-1.5) {$H$};
		\node [draw=none, fill=none] () at (0.8,3) {$P_1^{uv}$};
		\node [draw=none, fill=none] () at (1.2,1.1) {$P_2^{uv}$};
		\node [draw=none, fill=none] () at (0.8,0) {$P_3^{uv}$};
	\end{tikzpicture}
		\caption{An example of a realization $(H,L)$ of $G$. Each vertex is labeled with its list. }\label{fig-realization}
	\end{figure}
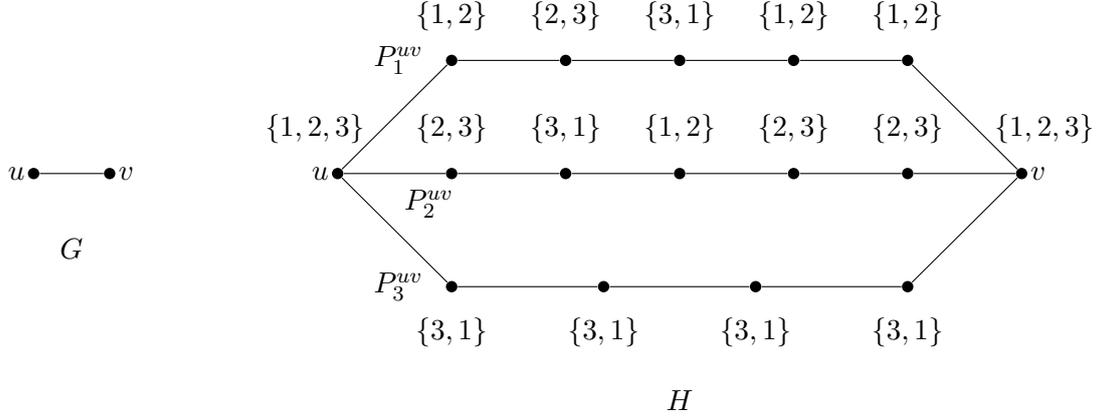
	\begin{proof}
		For every edge $uv\in  E(G)$, let the three vertex disjoint $uv$-paths be $P_1^{uv}$, $P_2^{uv}$ and $P_3^{uv}$ of length at least 4. For convenience, in this proof, we read every color modulo 3 (so if this would assign color 4, we assign color 1 instead). We define the list assignment $L:V(H)\rightarrow 2^{[3]}$ as follows. Let $L(u)=[3]$ for every vertex $u\in V(G)$. Then we take a path $P_i^{uv}$, $i\in [3]$ and denote $P_i^{uv}=uw_1w_2\ldots w_tv$. If $t$ is even, we set $L(w_j)=\{i,i+1\}$ for all $i\in [t]$. If $t$ is odd, we set $L(w_j)=\{i+j-1,i+j\}$ for $j\in [3]$, and $L(w_j)=\{i,i+1\}$ for $i\in \{4,\ldots,t\}$.

		\sta{The list assignment $L$ can be computed from $H$ in time $\mathcal{O}(|E(H)|)$, if given the correspondence between every edge $uv\in E(G)$ and its paths $P_1^{uv}$, $P_2^{uv}$ and $P_3^{uv}$ in $H$.}
		
		For a given path $P_i^e$, defining $L|_{V(P_i^e)}$ takes time $|E(P_i^e)|$. So the running time is $\sum_{e\in E(G)} \sum_{i=1}^3 |E(P_i^e) |=|E(H)|$. 
		
		\sta{For every vertex $u\in V(G)$, we have $L(u)=[3]$.}
		
		This holds immediately from the construction.
		
		\sta{For every $L$-coloring $f$ of $H$, the function $f|_{V(G)}$ is a 3-coloring of $G$.}
		
		Suppose not, then there is an $L$-coloring $f$ of $H$ such that there is an edge $uv\in E(G)$ with $f(u)=f(v)=i$ for some $i\in [3]$. We consider the path $P_i^{uv}=uw_1w_2\ldots w_tv$ in $H$. 
		If $t$ is even, then from the construction we have that $f(w_j)=i+1$ if $j$ is odd, and $f(w_j)=i$ if $j$ is even. But then we have $f(w_t)=f(v)$, which leads to a contradiction.
		If $t$ is odd, then from the construction we have $f(w_1)=i+1$, $f(w_2)=i+2$, $f(w_3)=i$, and for $j\in \{4,\ldots,t\}$, $f(w_j)=i+1$ if $j$ is even and $f(w_j)=i$ if $j$ is odd. But then $f(w_t)=f(v)=i$, which is a contradiction. Thus $f|_{V(G)}$ is a 3-coloring of $G$.
		
		\sta{For every 3-coloring $g$ of $G$, there is a corresponding $L$-coloring $g'$ of $H$ with $g'|_{V(G)}=g$.}
		
		For every $u\in V(G)$, let $g'(u)=g(u)$. 
		For every edge $uv\in V(G)$, we denote $g(u)=a$ and $g(v)=b$, $\{a,b\}\subseteq [3]$. Then we consider the path $P_i^{uv}=uw_1w_2\ldots w_tv$. We may assume that $a\in L(w_1)$ and $b\in L(w_t)$, for otherwise $P_i^{uv}$ is $L$-colorable.
		
		If $t$ is even, let $g'(w_1)\in L(w_1)\backslash \{a\}\neq \emptyset$, $g'(w_2)=a$, and $g'(w_j) =g'(w_1)$ if $j\in\{3,\ldots,t\}$ is odd, and $g'(w_j) =g'(w_2)$ if $j\in\{3,\ldots,t\}$ is even. Notice that we have $g'(w_t)=g'(w_2)=a$. Since $a\neq b$, we have $g'(v)\neq g'(w_t)$.
		
		So let us assume $t$ is odd. If $a=i$, or $a=i+2$ and $b=i+1$, then we set $g'(w_1)=i+1$, $g'(w_2)=i+2$, $g'(w_3)=i$, and $g'(w_j)=i$ if $j\in \{4,\ldots,t\}$ odd, $g'(w_j)=i+1$ if $j\in \{4,\ldots,t\}$ even. Thus $g'(w_t)=i\neq b=g'(v)$. If $a=i+1$, or $a=i+2$ and $b=i$, then we set $g'(w_1)=i$, $g'(w_2)=i+1$, $g'(w_3)=i+2$, and $g'(w_j)=i+1$ if $j\in \{4,\ldots,t\}$ is odd, $g'(w_j)=i$ if $j\in \{4,\ldots,t\}$ is even. Thus $g'(w_t)=i+1\neq b=g'(v)$.  
		
		Therefore, we have defined an $L$-coloring $g'$ of $H$ with $g'|_{V(G)}=g$.			
	\end{proof}
	
	The proof of Theorem \ref{OrderedM} is divided into three constructions, all of which use Lemma \ref{Realization} as a helper method.
	Intuitively, we want to construct a realization $H$ of $G$ replacing edges of $G$ by long paths in such a way that the interference between the paths is well-controlled. To do so, we create ``levels,'' ordering vertices in $H$ by their distance to ``original'' vertices of $G$ in $H$. The levels are ordered from left to right. All edges are either within a level of between two consecutive levels; and between two consecutive levels, the edges usually form a matching (except edges from the first to the second level; and edges that ``close'' a path).  Then in each \textquotedblleft level\textquotedblright, the ordering is defined according the function $f$ of its closest vertex in $V(G)$, either the same or the reverse of $f$. This means that the matchings between consecutive levels will either consist of edges that all pairwise cross, or all pairwise do not cross (depending on whether the ordering stays the same or is reversed). 
 Moreover, within each level, we define a way to \textquotedblleft close\textquotedblright \ some path. The key points in our constructions are: (1) whether the ordering is reversed between consecutive levels; (2) how to \textquotedblleft close the path\textquotedblright; (3) where to put the vertices used to \textquotedblleft close\textquotedblright \ paths.
	
	\begin{theorem} \label{H3}
		Given a graph $G$, there is a graph $H_3$ and two injective functions $\tau_5,\tau_6:V(G)\rightarrow \mathbb{R}$ such that:
		\begin{enumerate}
			\item There is a list assignment $L_1:V(H_3)\rightarrow 2^{[3]}$ such that the pair $(H_3,L_1)$ is a realization of $G$.
			\item The ordered graphs $(H_3,\tau_5)$ and $(H_3,\tau_6)$ can be constructed from $G$ in time $\mathcal{O}(m^2)$, where $m=|E(G)|$.
			\item The ordered graph $(H_3,\tau_5)$ is $M_1$ and $M_2$-free.
			\item The ordered graph $(H_3,\tau_6)$ is $M_3$-free.
		\end{enumerate} 
		Therefore, the \textsc{Ordered Graph List-3-Coloring Problem} is \textsf{NP}-complete when restricted to the class of $M_1$, $M_2$ or $M_3$-free ordered graphs.
	\end{theorem}
	\begin{proof}
		We denote $|V(G)|=n$ and $|E(G)|=m$. Let $f:E(G)\rightarrow [m]$ be an ordering of $E(G)$, and $g:V(G)\rightarrow [n]$ be an ordering of $V(G)$. Intuitively, in each level, we use a path with very short edges to connect two given vertices, and put the vertices used to \textquotedblleft close paths\textquotedblright \ in each level. For $\tau_5$, the ordering within each level  is the same as $f$, which creates more crossing edges. For $\tau_6$, the ordering within each level  is either the same or the reverse of $f$ depending on the parity, which creates more parallel (nested) edges. Formally, we construct the ordered graphs $(H_3,\tau_5)$ and $(H_3,\tau_6)$ as follows.
		
		For every edge $uv\in E(G)$, we create 6 vertices $w_1(u,v,j)$ and $w_1(v,u,j)$ for $j\in \{3f(uv)-2, 3f(uv)-1, 3f(uv)\}$, and add edges $uw_1(u,v,j)$ and $vw_1(v,u,j)$ for $j\in \{3f(uv)-2, 3f(uv)-1, 3f(uv)\}$. Let $W_1$ be the set of such vertices $w_1(u,v,j)$. Suppose now $W_{i-1}$ has been defined. We create a new vertex $w_i(u,v,j)$ if $w_{i-1}(u,v,j)\in W_{i-1}$ and $j\geq i$, and add an edge $w_{i-1}(u,v,j)w_{i}(u,v,j)$. Let $W_i$ be the set of such vertices $w_i(u,v,j)$. For convenience, we also denote $W_0=V(G)$.
		
		We define $\tau_5(v)=\tau_6(v)=g(v)$ for every $v\in V(G)$. 
		For every $i\in \{1,\ldots,3m\}$, we define $\tau_5 (w_i(u,v,j))=n + (\sum_{i'=1}^{i-1} |W_{i'}|) + |\{w_i(x,y,k)\in W_i: g(x)<g(u)\}|+ |\{w_i(u,y,k)\in W_i: g(y)<g(v)\}|+j+3-3f(uv)$ for every $w_i(u,v,j)\in W_i$.
		
		Let us consider the vertices $w_i(u',v',i)$ and $w_i(v',u',i)$ in $W_i$. Without loss of generality we may assume that $\tau_5(w_{i}(u',v',i))<  \tau_5(w_{i}(v',u',i))$. For every vertex $w_i(u,v,j)\in W_i$ with $\tau_5(w_{i}(u',v',i)) < \tau_5(w_{i}(u,v,j)) < \tau_5(w_{i}(v',u',i))$, we add one new vertex $z_i(u,v,j)$. Let $Z_i$ be the set of such vertices $z_i(u,v,j)$. Let $\tau_5 (z_i(u,v,j))=\tau_5 (w_i(u,v,j))+\frac{1}{2}$. For every $z_{i}(u,v,j) ,z_{i}(u^*,v^*,j^*) \in Z_i$ with $\tau_5(z_{i}(u,v,j)) < \tau_5(z_{i}(u^*,v^*,j^*))$, we add edges $z_{i}(u,v,j)z_{i}(u^*,v^*,j^*)$ if $\tau_5(z_{i}(u,v,j))=\tau_5(z_{i}(u^*,v^*,j^*))-1$, and edges $w_i(u',v',i) z_{i}(u,v,j)$ if $\tau_5(z_{i}(u,v,j))=\tau_5(w_i(u',v',i))+\frac{3}{2}$, and $w_i(v',u',i)z_{i}(u^*,v^*,j^*)$ if $\tau_5(z_{i}(u^*,v^*,j^*))=\tau_5(w_i(v',u',i))-\frac{1}{2}$.
		
		We define $\tau_6(v)$ as follows. Let $\tau_6(v)=\tau_5(v)$ for every $v\in V(G)$. For every $i\in \{1,\ldots,3m\}$ and $w_i(u,v,j)\in W_i$, we define $\tau_6 (w_i(u,v,j))= \tau_5 (w_i(u,v,j))$ if $i$ is even, and $\tau_6 (w_i(u,v,j))=n + (\sum_{i'=1}^{i-1} |W_{i'}|) + |W_i|+1 - ( |\{w_i(x,y,k)\in W_i: g(x)<g(u)\}|+ |\{w_i(u,y,k)\in W_i: g(y)<g(v)\}|+ j+3-3f(uv))$ if $i$ is odd.
		For every $i\in \{1,\ldots,3m\}$ and $z_{i}(u,v,j)\in Z_i$, let $\tau_6 (z_i(u,v,j))=\tau_6 (w_i(u,v,j))+\frac{1}{2}$ if $i$ odd, and $\tau_6 (z_i(u,v,j))=\tau_6 (w_i(u,v,j))-\frac{1}{2}$ if $i$ is even.
		
		Let $V(H_3)=V(G)\cup \bigcup_{i=1}^{3m} (W_i\cup Z_i)$ and $E(H_3)$ be the set of all edges defined above. From the construction, the functions $\tau_5$ and $\tau_6$ are orderings of $H_3$.

		We then let $P_i^{uv}$ consist of vertices 
		\[u, v, w_1(u,v,3f(uv)+i-3), w_2(u,v,3f(uv)+i-3),\ldots, w_{3f(uv)+i-3}(u,v,3f(uv)+i-3),\] \[w_1(v,u,3f(uv)+i-3), w_2(v,u,3f(uv)+i-3), \ldots, w_{3f(uv)+i-3}(v,u,3f(uv)+i-3),\] as well as all vertices in $Z_{3f(uv)+i-3}$, for $uv\in E(G)$ and $i\in [3]$. The graph $H_3$ and the paths $P_i^{uv}$ satisfy the condition of Lemma \ref{Realization}. Thus, letting $L_1$ be as in Lemma \ref{Realization}, we have:
		
		\sta{The pair $(H_3,L_1)$ is a realization of $G$.}
		
		Also, we deduce
		
		\sta{The ordered graphs $(H_3,\tau_5)$ and $(H_3,\tau_6)$ can be computed from $G$ in time $\mathcal{O}(m^2)$.}
		It takes time $\mathcal{O}(m)$ and $\mathcal{O}(n)$ to get the functions $f$ and $g$, respectively. 
		The set $W_1$ can be computed from $G$ and $f$ in time $\mathcal{O}(m)$. For $i\in \{2,\ldots,3m\}$, the set $W_i$ can be computed from $W_{i-1}$ in time $\mathcal{O}(m)$. And the function $\tau_5|_{W_i}$ can be computed in time $\mathcal{O}(m)$ for $i\in [3m]$. The set $Z_i$ and the function $\tau_5|_{Z_i}$ can be computed from $W_{i}$ and $\tau_5|_{W_i}$ in time $\mathcal{O}(m)$. Finally, the function  $\tau_6$ can be computed in time $\mathcal{O}(m^2)$. Thus, the ordered graphs $(H_3,\tau_5)$ and $(H_3,\tau_6)$ can be computed from $G$ in time $\mathcal{O}(m^2)$.
		
		\medskip
		From the construction defined above, we notice that:
		
		\sta{Given $k\in \{5,6\}$, for each edge $uv\in E(H_3)$ with $\tau_k(u)<\tau_k(v)$ and $u \in Z_i$ or $v\in Z_i$ for some $i\in \{1,\ldots,3m\}$, there is at most one vertex $w$ with $\tau_k(u)<\tau_k(w)<\tau_k(v)$.}\label{shortpath}
		
		With this observation, we are ready to prove the remaining two properties.
		
		\sta{The ordered graph $(H_3,\tau_5)$ is $M_1$ and $M_2$-free.}	
		
		Suppose $(H_3,\tau_5)$ contains a copy of $M_1$ or $M_2$ as ordered induced subgraph. We enumerate the vertices $v_i$ of $M_1$ or $M_2$ in increasing order with respect to the ordering $\tau_5$. Let us consider the vertex $v_1$. Because of the vertices $v_3,v_4$, by (\ref{shortpath}) we have $v_1,v_2,v_5,v_6\notin Z_i$ for every $i\in [3m]$. If $v_1\in V(G)$, then we have $v_6\in W_1$. So $v_2\in V(G)$ and $v_5\in W_1$. But then either $\tau_5(v_2)<\tau_5(v_1)$ or $\tau_5(v_5)>\tau_5(v_6)$, both of which cause a contradiction. If $v_1\in W_i$ for some $i\in [3m]$, then $v_6\in W_{i+1}$, and $v_2\in W_i$, $v_5\in W_{i+1}$, which is a contradiction. Thus, we have proved $(H_3,\tau_5)$ is $M_1$-free and $M_2$-free.
		
		\sta{The ordered graph $(H_3,\tau_6)$ is $M_3$-free.}
		
		Suppose $(H_3,\tau_6)$ contains a copy of $M_3$ as ordered induced subgraph. We enumerate the vertices $v_i$ of $M_3$ in increasing order with respect to the ordering $\tau_6$. Let us consider the vertex $v_1$. 
		Because of the vertices $v_2,v_3$, by (\ref{shortpath}) we have $v_1,v_4\notin Z_i$ for every $i\in [3m]$. Similarly, we have $v_t\notin Z_i$ for every $t\in [6]$ and $i\in [3m]$. Let $v_1\in W_i$ for some $i\in \{0,1,\ldots,3m\}$, then $v_4\in W_{i+1}$. Then let us consider the vertex $v_2$. For every vertex $x\in W_i$ with $\tau_6(x)>\tau_6(v_1)$ and for every vertex $y\in N(x)$ with $\tau_6(y)>\tau_6(x)$ and $y\notin Z_i$, we have $\tau_6(y)<\tau_6(v_4)$. Thus, the vertex $v_2$ is in $W_{i+1}$. But then for every vertex $x\in W_{i+1}$ with $\tau_6(v_2)<\tau_6(x)<\tau_6(v_4)$ and for every vertex $y\in N(x)$ with $\tau_6(y)>\tau_6(x)$ and $y\notin Z_i$, we have $\tau_6(y)<\tau_6(v_5)$, which means the vertex $v_3$ has nowhere to go. Thus, we have proved $(H_3,\tau_6)$ is $M_3$-free.
	\end{proof}
	
	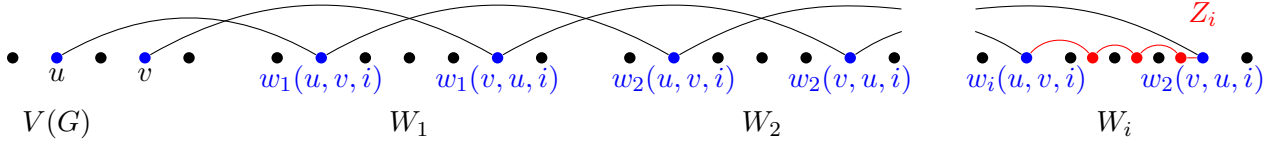
\begin{figure}[t]
		\centering 
				\begin{tikzpicture}[scale=0.58]
		
		\node  () at (3,0){};
		\node [label=below: $u$, blue] (u) at (4,0){};
		\node  () at (5,0){};
		\node [label=below: $v$, blue] (v) at (6,0){};
		\node  () at (7,0){};
		\node [draw=none, fill=none] () at (4,-1.5) {$V(G)$};
		
		\node  () at (9,0){};
		\node [blue] (u1) at (10,0){};
		\node [draw=none, fill=none, blue] () at (10,-.5) {$w_1(u,v,i)$};
		\node  () at (11,0){};
		\node  () at (12,0){};
		\node  () at (13,0){};
		\node [blue] (v1) at (14,0){};
		\node [draw=none, fill=none, blue] () at (14,-.5) {$w_1(v,u,i)$};
		\node  () at (15,0){};
		\draw (u) edge [bend left = 30]  (u1);
		\draw (v) edge [bend left = 30]  (v1);
		\node [draw=none, fill=none] () at (12,-1.5) {$W_1$};
		
		\node  () at (17,0){};
		\node [blue] (u2) at (18,0){};
		\node [draw=none, fill=none, blue] () at (18, -.5) {$w_2(u,v,i)$};
		\node  () at (19,0){};
		\node  () at (20,0){};
		\node  () at (21,0){};
		\node [blue] (v2) at (22,0){};
		\node [draw=none, fill=none, blue] () at (22,-.5) {$w_2(v,u,i)$};
		\node  () at (23,0){};
		\draw (u1) edge [bend left = 30]  (u2);
		\draw (v1) edge [bend left = 30]  (v2);
		\node [draw=none, fill=none] () at (20,-1.5) {$W_2$};
		
		\node  () at (25,0){};
		\node [blue] (ui) at (26,0){};
		\node [draw=none, fill=none, blue] () at (26,-.5) {$w_i(u,v,i)$};
		\node  () at (27,0){};
		\node [red] (z1) at (27.5,0){};
		\node  () at (28,0){};
		\node [red] (z2) at (28.5,0){};
		\node  () at (29,0){};
		\node [red] (z3) at (29.5,0){};
		\node [blue] (vi) at (30,0){};
		\node [draw=none, fill=none, blue] () at (30,-.5) {$w_2(v,u,i)$};
		\node  () at (31,0){};
		\draw [red] (ui) edge [bend left = 50]  (z1);
		\draw [red] (z1) edge [bend left = 50]  (z2);
		\draw [red] (z2) edge [bend left = 50]  (z3);
		\draw [red] (z3) edge [bend left = 0]  (vi);
		\node [draw=none, fill=none] () at (28,-1.5) {$W_i$};
		\node [red,draw=none, fill=none] () at (30,1) {$Z_i$};
		
		\node [draw=none, fill=none] (u3) at (23.3,1.1){};
		\node [draw=none, fill=none] (v3) at (23.3,0.65){};
		\node [draw=none, fill=none] (u4) at (24.7, 0.65){};
		\node [draw=none, fill=none] (v4) at (24.7, 1.1){};
		\draw (u2) edge [bend left = 18]  (u3);
		\draw (v2) edge [bend left = 7]  (v3);
		\draw (u4) edge [bend left = 7]  (ui);
		\draw (v4) edge [bend left = 18]  (vi);
		
	\end{tikzpicture}
	
		\caption{Construction of $(H_3,\tau_5)$ from Theorem \ref{H3}.}
	\end{figure}
	
	\begin{figure}[t]
		\centering 
				\begin{tikzpicture}[scale=0.58]
		
		\node  () at (3,0){};
		\node [label=below: $u$, blue] (u) at (4,0){};
		\node  () at (5,0){};
		\node [label=below: $v$, blue] (v) at (6,0){};
		\node  () at (7,0){};
		\node [draw=none, fill=none] () at (4,-1.5) {$V(G)$};
		
		\node  () at (15,0){};
		\node [blue] (u1) at (14,0){};
		\node [draw=none, fill=none, blue] () at (14,-.5) {$w_1(u,v,i)$};
		\node  () at (13,0){};
		\node  () at (12,0){};
		\node  () at (11,0){};
		\node [blue] (v1) at (10,0){};
		\node [draw=none, fill=none, blue] () at (10,-.5) {$w_1(v,u,i)$};
		\node  () at (9,0){};
		\draw (u) edge [bend left = 30]  (u1);
		\draw (v) edge [bend left = 30]  (v1);
		\node [draw=none, fill=none] () at (12,-1.5) {$W_1$};
		
		\node  () at (17,0){};
		\node [blue] (u2) at (18,0){};
		\node [draw=none, fill=none,blue] () at (18,-.5) {$w_2(u,v,i)$};
		\node  () at (19,0){};
		\node  () at (20,0){};
		\node  () at (21,0){};
		\node [blue] (v2) at (22,0){};
		\node [draw=none, fill=none, blue] () at (22,-.5) {$w_2(v,u,i)$};
		\node  () at (23,0){};
		\draw (u1) edge [bend left = 30]  (u2);
		\draw (v1) edge [bend left = 30]  (v2);
		\node [draw=none, fill=none] () at (20,-1.5) {$W_2$};
		
		\node  () at (31,0){};
		\node [blue] (ui) at (30,0){};
		\node [draw=none, fill=none, blue] () at (30,-.5) {$w_i(u,v,i)$};
		\node  () at (29,0){};
		\node [red] (z1) at (28.5,0){};
		\node  () at (28,0){};
		\node [red] (z2) at (27.5,0){};
		\node  () at (27,0){};
		\node [red] (z3) at (26.5,0){};
		\node [blue] (vi) at (26,0){};
		\node [draw=none, fill=none, blue] () at (26,-0.5) {$w_i(v,u,i)$};
		\node  () at (25,0){};
		\draw [red] (ui) edge [bend left = -35]  (z1);
		\draw [red] (z1) edge [bend left = -50]  (z2);
		\draw [red] (z2) edge [bend left = -50]  (z3);
		\draw [red] (z3) edge [bend left = 0]  (vi);
		\node [draw=none, fill=none] () at (28,-1.5) {$W_i$, $i$ odd};
		\node [red,draw=none, fill=none] () at (30.5,1) {$Z_i$};
		
		\node [draw=none, fill=none] (u3) at (23.3,2){};
		\node [draw=none, fill=none] (v3) at (23.3,0.6){};
		\node [draw=none, fill=none] (u4) at (24.7, 2){};
		\node [draw=none, fill=none] (v4) at (24.7, 0.6){};
		\draw (u2) edge [bend left = 12]  (u3);
		\draw (v2) edge [bend left = 12]  (v3);
		\draw (u4) edge [bend left = 12]  (ui);
		\draw (v4) edge [bend left = 12]  (vi);
	\end{tikzpicture}
	
		\caption{Construction of $(H_3,\tau_6)$ from Theorem \ref{H3}.}
	\end{figure}
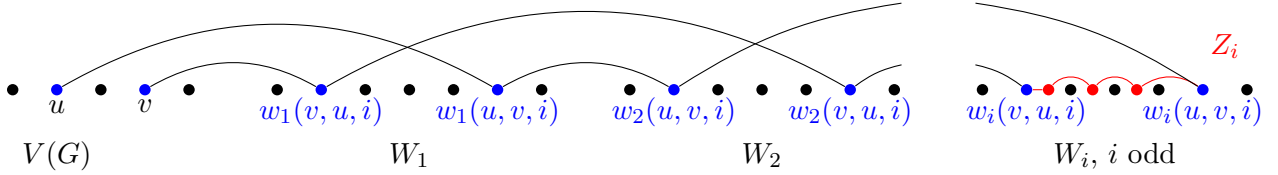
	
	\begin{theorem} \label{H4}
		Given a graph $G$, there is an ordered graph $(H_4,\tau_7)$ and a list assignment $L_2:V(H_4)\rightarrow 2^{[3]}$ such that:
		\begin{enumerate}
			\item The pair $(H_4,L_2)$ is a realization of $G$.
			\item The ordered graph $(H_4,\tau_7)$ can be constructed from $G$ in time $\mathcal{O}(m^2)$.
			\item The ordered graph $(H_4,\tau_7)$ is $M_4$-free.
		\end{enumerate} 
		Therefore, the \textsc{Ordered Graph List-3-Coloring Problem} is \textsf{NP}-complete when restricted to the class of $M_4$-free ordered graphs.
	\end{theorem}
	\begin{proof}
		We denote $|V(G)|=n$ and $|E(G)|=m$. Let $f:E(G)\rightarrow [m]$ be an ordering of $E(G)$, and $g:V(G)\rightarrow [n]$ be an ordering of $V(G)$. In this construction, the ordering within each level is the same as $f$. We \textquotedblleft close the path\textquotedblright \ directly using one extra vertex. The key idea is, the vertices used to \textquotedblleft close paths\textquotedblright \ are put at last. Formally, we construct the ordered graph $(H_4,\tau_7)$ as follows. 
		
		For every edge $uv\in E(G)$, we create 6 vertices $w_1(u,v,j)$ and $w_1(v,u,j)$ for $j\in \{3f(uv)-2, 3f(uv)-1, 3f(uv)\}$, and add edges $uw_1(u,v,j)$ and $vw_1(v,u,j)$ for $j\in \{3f(uv)-2, 3f(uv)-1, 3f(uv)\}$. Let $W_1$ be the set of such vertices $w_1(u,v,j)$. Suppose now $W_{i-1}$ has been defined. We create a new vertex $w_i(u,v,j)$ if $w_{i-1}(u,v,j)\in W_{i-1}$ and $j\geq i$, and add an edge $w_{i-1}(u,v,j)w_{i}(u,v,j)$. Let $W_i$ be the set of such vertices $w_i(u,v,j)$. For convenience, we also denote $W_0=V(G)$.
		
		We define $\tau_7(v)=g(v)$ for every $v\in V(G)$. 
		For every $i\in \{1,\ldots,3m\}$, we define $\tau_7 (w_i(u,v,j))=n + \sum_{i'=1}^{i-1} |W_{i'}| + |\{w_i(x,y,k)\in W_i: g(x)<g(u)\}|+ |\{w_i(u,y,k)\in W_i: g(y)<g(v)\}|+j+3-3f(uv)$ for every $w_i(u,v,j)\in W_i$.
		
		Let us consider the vertices $w_i(u',v',i)$ and $w_i(v',u',i)$ in $W_i$. For every $i\in \{1,\ldots,3m\}$, we add one new vertex $z_i$ and edges $w_i(u',v',i) z_i$ and $w_i(v',u',i)z_i$, and let $\tau_7 (z_i)=n+3m(3m+1)+(3m-i+1)$.
		
		Let $V(H_4)=V(G)\cup (\bigcup_{i=1}^{3m} W_i\cup \{z_i\})$ and $E(H_4)$ be the set of all edges defined above. From the construction, the function $\tau_7:V(H_4)\rightarrow \mathbb{R}$ is an ordering of $H_4$.

		We then let $P_i^{uv}$ consist of vertices 
		\[u, v, w_1(u,v,3f(uv)+i-3), w_2(u,v,3f(uv)+i-3), \ldots, w_{3f(uv)+i-3}(u,v,3f(uv)+i-3),\]
		\[w_1(v,u,3f(uv)+i-3), w_2(v,u,3f(uv)+i-3), \ldots, w_{3f(uv)+i-3}(v,u,3f(uv)+i-3),\] and $z_{3f(uv)+i-3}$, for every $uv\in E(G)$ and $i\in [3]$. The graph $H_4$ and the paths $P_i^{uv}$ satisfy the condition of Lemma \ref{Realization}. Thus, letting $L_2$ be as in Lemma \ref{Realization}, we have:
		
		\sta{The pair $(H_4,L_2)$ is a realization of $G$.}
		
		Also, we deduce: 
		
		\tbox{The ordered graph $(H_4,\tau_7)$ can be computed from $G$ in time $\mathcal{O}(m^2)$.}
		
		It takes time $\mathcal{O}(m)$ and $\mathcal{O}(n)$ to get the functions $f$ and $g$, respectively. 
		The set $W_1$ can be computed from $G$ and $f$ in time $\mathcal{O}(m)$. For $i\in \{2,\ldots,3m\}$, the set $W_i$ can be computed from $W_{i-1}$ in time $\mathcal{O}(m)$. And the function $\tau_7|_{W_i}$ can be computed in time $\mathcal{O}(m)$ for $i\in [3m]$. Thus, the ordered graph $(H_4,\tau_7)$ can be computed from $G$ in time $\mathcal{O}(m^2)$.
		
		\tbox{The ordered graph $(H_2,\tau_7)$ is $M_4$-free.}	
		
		Suppose that $(H_4,\tau_7)$ contains a copy of $M_4$ as ordered induced subgraph. We enumerate the vertices $v_i$ of $M_4$ in increasing order with respect to the ordering $\tau_7$. Because of the edges of $M_4$, we have $v_1,v_2,v_3 \notin\{z_1,\ldots,z_{3m}\}$, and $v_1,v_2\notin W_{3m}$. Let $v_1\in W_i$ for some $i\in \{0,\ldots,3m-1\}$. If $v_5\in W_{i+1}$, then we have $v_4 \notin\{z_1,\ldots,z_{3m}\}$. For every vertex $x$ with $\tau_7(v_1)< \tau_7(x)< \tau_7(v_5)$ and for every vertex $y\in N(x)\backslash \{z_1,\ldots,z_{3m}\}$ with $\tau_7(y)>\tau_7(x)$, we have $\tau_7(y)> \tau_7(v_5)$. Thus, we conclude that $v_5\in \{z_1,\ldots,z_{3m}\}$. But then for every vertex $x\in\{z_1,\ldots,z_{3m}\}$ with $\tau_7(x)> \tau_7(v_5)$ and for every vertex $y\in N(x)$, we have $\tau_7(y)<\tau_7(v_1)$, which is a contradiction. Thus, we have proved $(H_4,\tau_7)$ is $M_4$-free.
	\end{proof}
	\begin{figure}[t]
		\centering 
				\begin{tikzpicture}[scale=0.58]
		
		\node  () at (3,0){};
		\node [label=below: $u$, blue] (u) at (4,0){};
		\node  () at (5,0){};
		\node [label=below: $v$, blue] (v) at (6,0){};
		\node  () at (7,0){};
		\node [draw=none, fill=none] () at (4,-1.5) {$V(G)$};
		
		\node  () at (9,0){};
		\node [blue] (u1) at (10,0){};
		\node [blue] [draw=none, fill=none] () at (10,-0.5) {$w_1(u,v,i)$};
		\node  () at (11,0){};
		\node  () at (12,0){};
		\node  () at (13,0){};
		\node [blue] (v1) at (14,0){};
		\node [blue] [draw=none, fill=none] () at (14,-0.5) {$w_1(v,u,i)$};
		\node  () at (15,0){};
		\draw (u) edge [bend left = 30]  (u1);
		\draw (v) edge [bend left = 30]  (v1);
		\node [draw=none, fill=none] () at (12,-1.5) {$W_1$};
		
		\node  () at (17,0){};
		\node [blue] (ui) at (18,0){};
		\node [blue] [draw=none, fill=none] () at (18,-0.5) {$w_i(u,v,i)$};
		\node  () at (19,0){};
		\node  () at (20,0){};
		\node  () at (21,0){};
		\node [blue] (vi) at (22,0){};
		\node [blue] [draw=none, fill=none] () at (22,-0.5) {$w_i(v,u,i)$};
		\node  () at (23,0){};
		\node [draw=none, fill=none] () at (20,-1.5) {$W_i$};
		
		\node  () at (25,0){};
		\node  () at (26,0){};
		\node  () at (27,0){};
		\node [red] (z) at (28,0){};
		\node [red] [draw=none, fill=none] () at (28,-0.5) {$z_i$};
		\node  () at (29,0){};
		\draw [red] (ui) edge [bend left = 30]  (z);
		\draw [red] (vi) edge [bend left = 30]  (z);
		\node [draw=none, fill=none, red] () at (27,-1.5) {$Z$};
		
		\node [draw=none, fill=none] (u3) at (15.3,1.1){};
		\node [draw=none, fill=none] (v3) at (15.3,0.65){};
		\node [draw=none, fill=none] (u4) at (16.7, 0.65){};
		\node [draw=none, fill=none] (v4) at (16.7, 1.1){};
		\draw (u1) edge [bend left = 18]  (u3);
		\draw (v1) edge [bend left = 7]  (v3);
		\draw (u4) edge [bend left = 7]  (ui);
		\draw (v4) edge [bend left = 18]  (vi);
		
	\end{tikzpicture}
				
		\caption{Construction of $(H_4,\tau_7)$ from Theorem \ref{H4}.}
	\end{figure}
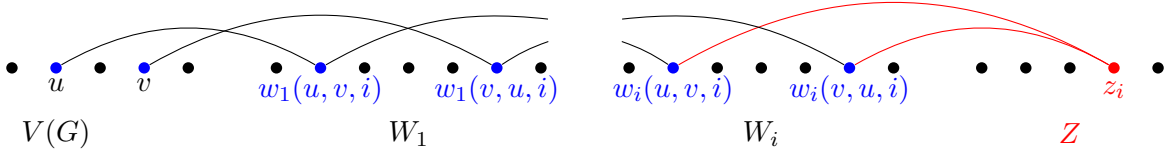
	
	\begin{theorem} \label{H5}
		Given a graph $G$, there is an ordered graph $(H_5,\tau_8)$ and a list assignment $L_3:V(H_5)\rightarrow 2^{[3]}$ such that:
		\begin{enumerate}
			\item The pair $(H_5,L_3)$ is a realization of $G$.
			\item The ordered graph $(H_5,\tau_8)$ can be constructed from $G$ in time $\mathcal{O}(m^3)$.
			\item The ordered graph $(H_5,\tau_8)$ is $M_5$-free.
		\end{enumerate} 
		Therefore, the \textsc{Ordered Graph List-3-Coloring Problem} is \textsf{NP}-complete when restricted to the class of $M_5$-free ordered graphs.
	\end{theorem}
	\begin{proof}
		We denote $|V(G)|=m$ and $|E(G)|=n$. Let $f:E(G)\rightarrow [m]$ be an ordering of $E(G)$, and $g:V(G)\rightarrow [n]$ be an ordering of $V(G)$. 
		The key point in this construction is the way we \textquotedblleft close path.\textquotedblright \ Instead of adding new vertices, one vertex  \textquotedblleft switches\textquotedblright \ (in the ordering) with its previous vertex until the two given vertices are consecutive. Formally, we construct the ordered graph $(H_5,\tau_8)$ as follows.
		
		For every edge $uv\in E(G)$, we create 6 vertices $w_1(u,v,j)$ and $w_1(v,u,j)$ for $j\in \{3f(uv)-2, 3f(uv)-1, 3f(uv)\}$, and add edges $uw_1(u,v,j)$ and $vw_1(v,u,j)$ for $j\in \{3f(uv)-2, 3f(uv)-1, 3f(uv)\}$. Let $W_1$ be the set of such vertices $w_1(u,v,j)$. Suppose now $W_{i-1}$ has been defined. We create a new vertex $w_i(u,v,j)$ if $w_{i-1}(u,v,j)\in W_{i-1}$ and $j\geq i$. Let $W_i$ be the set of such vertices $w_i(u,v,j)$. For convenience, we also denote $W_0=V(G)$.
		
		We define $\tau_8(v)=g(v)$ for every $v\in V(G)$. 
		For every $i\in \{1,\ldots,3m\}$, we define $\tau_8 (w_i(u,v,j))= 36m^2(i-1)+ n + \sum_{i'=1}^{i-1} |W_{i'}| + |\{w_i(x,y,k)\in W_i: g(x)<g(u)\}|+ |\{w_i(u,y,k)\in W_i: g(y)<g(v)\}|+ j+3-3f(uv)$ for every $w_i(u,v,j)\in W_i$.
		
		Let us consider the vertices $w_i(u',v',i)$ and $w_i(v',u',i)$ in $W_i$. Without loss of generality we may assume that $\tau_8(w_{i}(u',v',i))=\tau_8(w_{i}(v',u',i))-a_i$ for some $a_i\in \mathbb{N}$. We create a sequence of sets $X_k^i=\{x_k^i(u,v,j): w_i(u,v,j)\in W_i \}$, for $k\in [a_i-1]$. For convenience, we also denote $X_{a_i}^i=W_{i+1}=X_0^{i+1}$. For every $i\in [3m]$, we add edges $x^i_k(u,v,j)x^i_{k+1}(u,v,j)$ for every $k\in \{0,\ldots,a_i-1\}$ and $x^i_{k+1}(u,v,j)\in X_{k+1}^i$, and edge $x^i_{a_i-1}(u',v',i)x^i_{a_i-1}(v',u',i)$. 
		
		For every $k\in [a_i-1]$, let $\tau_8(x^i_k(v',u',i))=\tau_8(w_i(v',u',i))+6mk-k-\frac{1}{2}$, and $\tau_8(x^i_k(u,v,j))=\tau_8(w_i(u,v,j))+6mk$  otherwise. Notice that this implies $\tau_8(x_{k+1}^i(v',u',i)) - \tau_8(x_k^i(v',u',i)) = \tau_8(x_{k+1}^i(u,v,j)) - \tau_8(x_k^i(u,v,j)) -1$ for every $k\in \{0,\ldots,a_i-1\}$ and $\{u,v\}\neq \{u',v'\}$ and $x_{k+1}^i(u,v,j)\in X_{k+1}^i$. 
		
		Let $V(H_5)=V(G)\cup (\bigcup_{i=1}^{3m} W_i\cup (\bigcup_{k=1}^{a_i-1} X_k^i))$ and $E(H_5)$ be the set of all edges defined above. From the construction, the function $\tau_8:V(H_5)\rightarrow \mathbb{R}$ is an ordering of $H_5$.
		
		For $uv\in E(G)$ and $i\in [3]$, we then let $P_i^{uv}$ consist of vertices 
		\[u,v, w_1(u,v,3f(uv)+i-3), w_2(u,v,3f(uv)+i-3), \ldots, w_{3f(uv)+i-3}(u,v,3f(uv)+i-3),\]
		\[w_1(v,u,3f(uv)+i-3), w_2(v,u,3f(uv)+i-3), \ldots, w_{3f(uv)+i-3}(v,u,3f(uv)+i-3),\] 
		as well as all vertices  $x_{k}^j(u,v,3f(uv)+i-3)$ and $x_{k}^j(v,u,3f(uv)+i-3)$ for $j\in [3f(uv)+i-3]$ and $k\in [a_{j}-1]$. The graph $H_5$ and the paths $P_i^{uv}$ satisfy the conditions of Lemma \ref{Realization}. Thus, letting $L_3$ be as in Lemma \ref{Realization}, we have:

		\tbox{The pair $(H_5,L_3)$ is a realization of $G$.}
		
		Also, we deduce:
		
		\tbox{The ordered graph $(H_5,\tau_8)$ can be computed from $G$ in time $\mathcal{O}(m^3)$.}
		
		The set $W_1$ can be computed from $G$ and $f$ in time $\mathcal{O}(m)$. For $i\in \{2,\ldots,3m\}$, the set $W_i$ can be computed from $W_{i-1}$ in time $\mathcal{O}(m)$. And the function $\tau_8|_{W_i}$ can be computed in time $\mathcal{O}(m)$ for $i\in [3m]$. The set $X_k^i$ and the function $\tau_8|_{X_k^i}$ can be computed from $W_{i}$ and $\tau_8|_{W_i}$ in time $\mathcal{O}(m)$ for $k\in [a_i-1]$. And $a_i\leq |W_i|=\mathcal{O}(m)$, $|X_k^i|=|W_i|$ for $k\in [a_i-1]$. Thus, the ordered graph $(H_5,\tau_8)$ can be computed from $G$ in time $\mathcal{O}(m^3)$.
		
		\tbox{The ordered graph $(H_5,\tau_8)$ is $M_5$-free.}	
		
		Suppose $(H_5,\tau_8)$ contains a copy of $M_5$ as ordered induced subgraph. We enumerate the vertices $v_i$ of $M_5$ in increasing order with respect to the ordering $\tau_8$. Let us consider the edges $v_1v_5$ and $v_2v_3$. We claim that $v_2=x_k^i(v',u',i)$ and $v_3=x_{k+1}^i(v',u',i)$ for some $i\in [3m]$ and $k\in [a_i-1]$, and $u'$ and $v'$ being the vertices in $V(H_5)$ such that $g(u')<g(v')$ and $f(u'v')=\lceil \frac{i}{3}\rceil$. This is because otherwise from the construction of $\tau_8$, the condition $\tau_8(v_1)< \tau_8(v_2)$ implies $\tau_8(y)<\tau_8(z)$ for every $y\in N^+(v_1)$ and $z\in N^+(v_2)$.
		
		Since $v_1v_5$ is an edge, we have $v_1\in X_k^i$ and $v_5\in X_{k+1}^i$. Moreover, from the construction of $\tau_8$, for every two distinct $x, x'\in X_{k+1}^i\backslash \{x_k^i(v',u',i)\}$, we have $|\tau_8(x)-\tau_8(x')|\geq 1$. Since $\tau_8(x_{k+1}^i(v',u',i)) - \tau_8(x_k^i(v',u',i)) = \tau_8(x_{k+1}^i(u,v,i)) - \tau_8(x_k^i(u,v,i)) -1$, there is no vertex in $X_{k+1}^i$ which could be $v_4$, a contradiction. Thus, we have proved $(H_5,\tau_8)$ is $M_5$-free.
		
	\end{proof}	
	\begin{figure}[t]
		\centering 
						\begin{tikzpicture}[scale=0.58]
		\node [blue] (u1) at (10,0){};
		\node [draw=none, fill=none, blue] () at (10,-.6) {$x_{k}^i(u,v,i)$};
		\node  (a2) at (11,0){};
		\node  (a3) at (12,0){};
		\node  (a4) at (13,0){};
		\node [red] (v1) at (14,0){};
		\node [draw=none, fill=none, red] () at (14,-.6) {$x_{k}^i(v,u,i)$};
		\node  (a5) at (15,0){};
		\node [draw=none, fill=none] () at (12,-1.5) {$X_k^i$};
		
		\node [blue] (u2) at (18,0){};
		\node [draw=none, fill=none, blue] () at (18, -.6) {$x_{k+1}^i(u,v,i)$};
		\node  (b2) at (19,0){};
		\node  (b3) at (20,0){};
		\node  (b4) at (21,0){};
		\node [red] (v2) at (20.5,0){};
		\node [draw=none, fill=none, red] () at (22,-.6) {$x_{k+1}^i(v,u,i)$};
		\node  (b5) at (23,0){};
		\draw [gray](u1) edge [bend left = 40]  (u2);
		\draw [red] (v1) edge [bend left = 30]  (v2);
		\node [draw=none, fill=none] () at (20,-1.5) {$X_{k+1}^i$};
		
		\draw [gray](a2) edge [bend left = 40]  (b2);
		\draw [gray](a3) edge [bend left = 40]  (b3);
		\draw [green](a4) edge [bend left = 40]  (b4);
		\draw [gray](a5) edge [bend left = 40]  (b5);
		
		\node [blue] (u3) at (26,0){};
		\node [draw=none, fill=none, blue] () at (26,-1.2) {$x_{k+2}^i(v,u,i)$};
		\node  (c2) at (27,0){};
		\node [red] (v3) at (27.5,0){};
		\node [draw=none, fill=none, red] () at (29.5,-.6) {$x_{k+2}^i(v,u,i)$};
		\node  (c3) at (28,0){};
		\node  (c4) at (29,0){};
		\node  (c5) at (31,0){};
		\draw [gray](u2) edge [bend left = 40]  (u3);
		\draw [red] (v2) edge [bend left = 30]  (v3);
		\node [draw=none, fill=none] () at (27,-2) {$X_{k+2}^i$};
		
		\draw [gray] (b2) edge [bend left = 40]  (c2);
		\draw [green](b3) edge [bend left = 40]  (c3);
		\draw [gray](b4) edge [bend left = 40]  (c4);
		\draw [gray](b5) edge [bend left = 40]  (c5);
		
	\end{tikzpicture}
	
		\caption{Construction of $X_k^i$, $X_{k+1}^i$ and $X_{k+2}^i$ in $(H_5,\tau_8)$ from Theorem \ref{H5}.}
	\end{figure}
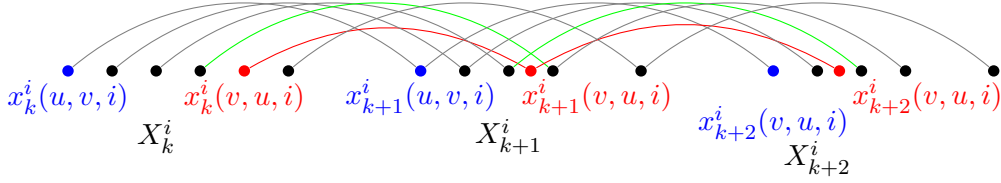

	Thus, combining Theorem \ref{H3}, \ref{H4} and \ref{H5}, we have proved Theorem \ref{OrderedM}.

	\bibliographystyle{abbrv}
	\bibliography{reference}
\end{document}